\numberwithin{equation}{section}
\newtheorem{thm}{Theorem}[section]
\newtheorem{lem}{Lemma}[section]
\newtheorem{prop}{Proposition}[section]
\newtheorem{rem}{Remark}[section]
\theoremstyle{definition}
\theoremstyle{remark}
\def\br{\mathbf R}
\def\p{\partial}
\def\ri{{\rm i}}
\def\dd{{\rm d}}
\def\di{{\rm div}}
\begin{document}
\title{\bf On decay of solutions to the anisotropic Boussinesq equations near the hydrostatic balance in half space   $\br_+^3$}
\author{Wanrong  Yang$^a$\thanks{E-mail: yangwanrong1618@163.com},
Aibin Zang$^b$ \thanks{E-mail: abzang@jxycu.edu.cn}\thanks{Corresponding author} \\
\textit{\small  a. School of Mathematics and Information Sciences, North Minzu University,   Yinchuan, Ningxia 750021, PR China} \\
\textit{\small  b. School of Mathematics and Computer Science,    Yichun University, Yichun, Jiangxi, P. R. China}
}

\date{}
\maketitle

\begin{abstract}
	The system of the Boussinesq equations is one of the most important models for geophysical fluids. This paper focuses on the initial-boundary problem of the 3D incompressible anisotropic Boussinesq system with horizontal dissipation. The goal here is to assess the stability property and large-time behavior of perturbations near the hydrostatic balance. By utilizing the structure of the system, the energy methods and the means of bootstrapping argument, we prove the global stability property in the Sobolev space $H^3(\br^3_+)$. After taking a Fourier transform in $x_h = (x_1, x_2)$ and Fourier cosine and sine transforms in $x_3$  for the system, we obtain the  decay rates for the global solution itself as well as its derivatives.
\vspace{4mm}

{\textbf{Keywords:} Anisotropic Boussinesq equations, Decay, Hydrostatic balance, Navier boundary conditions}

{\textbf{AMS Subject Classification (2020):} 35B35, 35B40, 35Q35, 76D03}
\end{abstract}

\section{Introduction}\label{S0}

The Boussinesq system arises from a zeroth order approximation of the coupling between Navier-Stokes equations and the thermodynamic equations. It can be used as a model to describe many geophysical phenomena \cite{Go,wx5,Ro}. The standard 3D incompressible Boussinesq equations are given
by
\begin{eqnarray}
\begin{cases}
u_t+u\cdot\nabla u+\nabla P=\nu\Delta u+\Theta e_n, &x\in\br^3,t>0,\\
\Theta_t+u\cdot\nabla\Theta=\kappa\Delta\Theta,&x\in\br^3,t>0,\\
\nabla\cdot u=0,&x\in\br^3,t>0,
\end{cases}\label{E1.1}
\end{eqnarray}
where $u=(u_1,u_2,u_3)$, $P$, $\Theta$ respectively, denote the velocity, pressure, and temperature. $e_3=(0,0,1)$ and $\nu\ge 0,\kappa \ge 0$ are the viscosity and the thermal diffusivity.

There are many results for the Cauchy problem of the 3D Boussinesq equations with initial data
\begin{equation*}\label{E1.2}
u(x,t=0)=u_0(x), \Theta(x,t=0)=\Theta_0(x).
\end{equation*}
When $\nu$ and $\kappa$ are positive constants, the local well-posedness can be easily established for the 3D Boussinesq system by the energy method. One can refer \cite{wx10},\cite{wx12},\cite{wx18}-\cite{wx15},\cite{wx21} and reference therein. R. Danchin and M. Paicu \cite{DP} proved the global existence of weak solution for $L^2$ data and the global well-posedness for small smooth data. T. Hmidi and F. Rousset \cite{HR1,HR2} proved the global well-posedness of the 3-D axisymmetric Boussinesq system without swirl. Abidi, Hmidi and Keraani \cite{wx34} also showed the global well-posedness for the Boussinesq system with axisymmetric initial data. Jiu, Wang and Wu \cite{wx6} established partial regularity for the appropriate weak solution by the De Giorgi iterative approach. In recent years, there are many works devoted to the study of the Boussinesq system with partial dissipation. Under the assumption that the initial data are axisymmetric without swirl, Miao and Zheng \cite{wx11} proved the global well-posedness for the 3D Boussinesq equation with horizontal dissipation. Recently, big progresses have been made on the stability problem. Dong \cite{wx25} studied asymptotic stability to the 3D Boussinesq equation in the whole space with a velocity damping term. In addition, the decay rates of the velocity and large-time behavior of the temperature also were given. Wu and Zhang \cite{wx33} solved the stability and large-time behavior problem with mixed partial dissipation in spatial domain $\Omega=\br^2\times T$ with  $T=[-\frac{1}{2},\frac{1}{2}]$. Shang and Xu \cite{wx14} examined the stability and the decay of the corresponding linearized systems of 3D Boussinesq equations with horizontal dissipation. Ji, Yan and Wu \cite{JYWu} further expanded the results and obtained the optimal decay for the corresponding nonlinear Boussinesq system.

We try to consider this system in the bounded domain $\Omega$ with smooth boundary $\partial\Omega$. For $\Omega\subset\br^2,$ there are many literatures to consider the case with either only viscosity, or only thermal diffusivity. M. Lai, R. Pan and  K. Zhao \cite{LPZ} have obtained the global existence results of the 2D Boussinesq system without thermal diffusivity and the velocity satisfying Dirichlet boundary conditions. W. Hu, I. Kukavica and M. Ziane in \cite{HKZ} improved this result under less regularity of initial data. Y. Sun and Z. Zhang studied the global regularity for the 2D Boussinesq system with viscosity and thermal diffusivity satisfying Dirichlet boundary conditions. In \cite{Hwuphyd}, W. Hu, Y. Wang, J. Wu, B. Xiao and J. Yuan investigated the 2D Boussinesq system without thermal diffusivity satisfying the following Navier boundary conditions
\begin{equation}\label{B1.4}
u\cdot n=0, \,\, 2n\cdot D(u)\cdot\tau+\alpha u\cdot\tau=0,
\end{equation}
where $D(u)$ is the Cauchy stress of the velocity, and obtained a unique global strong solution for this initial-boundary value problem. However, for $\Omega\subset\br^3$ only local well-posedness results have been established, whether smooth solution blows up in finite time remains open.

In this paper, we would consider the global stability for the Boussinesq equations in $\br_+^3.$ We impose Navier slip boundary conditions for velocity,
\begin{equation}\label{N1.5}
u_3=0,\,\,\, \partial_3 u_1=\partial_3 u_2=0, ~\mbox{on}~x_3=0,
\end{equation}
which is a special case of the boundary conditions \eqref{B1.4} and the no-slip boundary condition for temperature
\begin{equation}\label{N1.6}
\Theta=0, ~\mbox{on}~x_3=0.
\end{equation}

The hydrostatic equilibrium given by
\begin{equation}\label{hyb1.7}
u^{(0)}=(0,0,0),\,\, \Theta^{(0)}=x_3,\,\, P^{(0)}=\frac{1}{2}x_3^2,
\end{equation}
is a very special steady-state solution of \eqref{E1.1} satisfying the boundary conditions \eqref{N1.5} and \eqref{N1.6}. To clarify the global stability problem near the hydrostatic balance \eqref{hyb1.7} with the boundary conditions \eqref{N1.5} and \eqref{N1.6} in half space, we consider the Boussinesq system with horizontal dissipations governing the perturbation  $(u,\theta,p)$ with $\theta = \Theta-\Theta^{(0)}, ~p = P-P^{(0)},$
 \begin{eqnarray}
\begin{cases}
u_t+u\cdot\nabla u+\nabla p=\nu\Delta_h u+\theta e_3, &x\in\br^3_+,t>0,\\
\theta_t+u\cdot\nabla\theta+u_3=\kappa\Delta_h\theta,&x\in\br^3_+,t>0,\\
\nabla\cdot u=0,&x\in\br^3_+,t>0,\\
u(x,0)=u_0(x), \theta(x,0)=\theta_0(x), &x\in\br_+^3,
\end{cases}\label{E1.9}
\end{eqnarray}
here $\Delta_h=\partial_{x_1}^2+\partial_{x_2}^2$ stands for the horizontal Laplacian.  For brevity, let $\nu=\kappa=1$ hereafter. In this paper, we shall check the following results.

\begin{thm}\label{T1.1}(Global stability). Assume that initial data $(u_0,\theta_0)\in H^3(\br_+^3)$ with boundary conditions \eqref{N1.5} and \eqref{N1.6}, satisfies $\nabla\cdot u_0=0.$ In addition, let $\p_3^2\theta_0=0, ~\mbox{on}~ x_3=0.$ Then there exists $\varepsilon>0$ such that, if
$$\|u_0\|_{H^3}+\|\theta_0\|_{H^3}\leqslant\varepsilon,$$
then the system \eqref{E1.9} with boundary conditions \eqref{N1.5} and \eqref{N1.6} has a unique global solution $(u,\theta)\in L^\infty(0,\infty;(H^3(\br_+^3)))$ satisfying, for a constant $C>0$ and for all $t>0,$
\begin{subequations}
\begin{align*}
\|u\|^2_{H^3}+\|\theta\|^2_{H^3}+\int_0^t\left(\|\nabla_h u(\tau)\|^2_{H^3}+\|\nabla_h\theta(\tau)\|^2_{H^3}\right)\dd\tau\leqslant C\varepsilon^2.
\end{align*}
\end{subequations}

\end{thm}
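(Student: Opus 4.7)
The plan is to establish Theorem \ref{T1.1} via the standard framework of local well-posedness plus an a priori estimate plus a bootstrap continuation argument. Local existence of an $H^3$ solution on some maximal interval $[0,T^*)$ follows from classical arguments, so the core task is to prove the a priori bound that forces $T^* = \infty$. Introducing
\begin{align*}
E(t) = \|u(t)\|_{H^3}^2 + \|\theta(t)\|_{H^3}^2, \qquad D(t) = \int_0^t\bigl(\|\nabla_h u(\tau)\|_{H^3}^2 + \|\nabla_h\theta(\tau)\|_{H^3}^2\bigr)\dd\tau,
\end{align*}
the target is a differential inequality of the form $E(t) + D(t) \leqslant C_0 E(0) + C_1(E(t)+D(t))^{3/2}$, which, together with $E(0)\leqslant\varepsilon^2$ and continuity of $t\mapsto E(t)$, closes the bootstrap and yields $E(t)+D(t)\leqslant C\varepsilon^2$ for all $t\geqslant 0$.

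Before running the energy identity, I would verify that the boundary data in \eqref{N1.5}--\eqref{N1.6} propagate through the derivatives appearing below. The trace conditions $\partial_3 u_1=\partial_3 u_2=u_3=0$, combined with $\nabla\cdot u=0$, force $\partial_3^2 u_3|_{x_3=0}=0$, and by iteration produce the parity structure (even-order $\partial_3$ of $u_h$, odd-order $\partial_3$ of $u_3$, odd-order $\partial_3$ of $\theta$ vanish on the boundary) that kills all boundary integrals arising from integration by parts in $x_3$ throughout the $H^3$ estimate. The extra compatibility $\partial_3^2\theta_0|_{x_3=0}=0$ gives the additional datum needed: restricting $\partial_3^2$ of the $\theta$-equation to $\{x_3=0\}$ yields a linear transport equation for $\partial_3^2\theta|_{x_3=0}$ that preserves the value $0$ in time, enabling boundary-free integration by parts at the $\partial_3^3\theta$ level.

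Next I would carry out the energy estimate. At the $L^2$ level, testing with $u$ and $\theta$, the buoyancy coupling $\int\theta u_3\,\dd x$ cancels against $\int u_3\theta\,\dd x$ coming from the forcing $u_3$ in the temperature equation, producing
\begin{align*}
\tfrac{1}{2}\tfrac{\dd}{\dd t}(\|u\|_{L^2}^2 + \|\theta\|_{L^2}^2) + \|\nabla_h u\|_{L^2}^2 + \|\nabla_h\theta\|_{L^2}^2 = 0.
\end{align*}
The same cancellation persists at every pure derivative level $\partial^\alpha$ with $|\alpha|\leqslant 3$, and the remaining work reduces to bounding $\int\partial^\alpha(u\cdot\nabla u)\cdot\partial^\alpha u\,\dd x$ and $\int\partial^\alpha(u\cdot\nabla\theta)\,\partial^\alpha\theta\,\dd x$. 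Because no vertical dissipation is available, each triple product has to be estimated via anisotropic Ladyzhenskaya/Sobolev inequalities such as
\begin{align*}
\int_{\br_+^3}|fgh|\,\dd x \lesssim \|f\|_{L^2}^{1/2}\|\partial_1 f\|_{L^2}^{1/2}\,\|g\|_{L^2}^{1/2}\|\partial_2 g\|_{L^2}^{1/2}\,\|h\|_{L^2}^{1/2}\|\partial_3 h\|_{L^2}^{1/2},
\end{align*}
so that every factor either sits in $H^3$ or carries a horizontal derivative controllable by $D(t)$, and the divergence-free identity $\partial_3 u_3 = -\partial_1 u_1-\partial_2 u_2$ is exploited whenever a $u_3$-factor meets a $\partial_3$, in order to trade the vertical derivative for a horizontal one.

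The principal obstacle is the worst terms at the top level $|\alpha|=3$ involving three vertical derivatives without a horizontal derivative immediately available, most notably $\int u_3\,\partial_3^4\theta\,\partial_3^3\theta\,\dd x = \tfrac{1}{2}\int u_3\,\partial_3(\partial_3^3\theta)^2\,\dd x$ in the temperature estimate and its velocity analogue. After integrating by parts in $x_3$, the boundary contribution vanishes thanks to $u_3|_{x_3=0}=0$, and the interior piece becomes $-\tfrac{1}{2}\int\partial_3 u_3\,(\partial_3^3\theta)^2\,\dd x = \tfrac{1}{2}\int(\partial_1 u_1+\partial_2 u_2)(\partial_3^3\theta)^2\,\dd x$, a quantity that is now horizontally dissipated and bounded by the anisotropic inequality in terms of $\|\nabla_h u\|_{H^3}$ and $\|\theta\|_{H^3}$, hence absorbable into $D(t)$. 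Handling the analogous cross-terms for the velocity by the same divergence-free trick yields the sought cubic inequality, and the bootstrap on $\varepsilon$ delivers the global bound asserted in the theorem.
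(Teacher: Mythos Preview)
Your overall strategy---energy bootstrap, the anisotropic triple-product inequality, and the divergence-free substitution $\partial_3 u_3=-\nabla_h\cdot u_h$ for the worst vertical terms---matches the paper's, and your observation that $\partial_3^2\theta|_{x_3=0}=0$ propagates from the initial compatibility is correct and is exactly the content of the paper's Lemma~\ref{lem01}. But there is a genuine gap: you never address the \emph{pressure} boundary term, and your parity claims are stated with the wrong orders.

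When you test the velocity equation by $\partial_3^3 u$ and integrate by parts, the pressure contributes the surface integral $\int_{x_3=0}\partial_3^3 p\,\partial_3^3 u_3\,\dd\sigma$. Neither factor vanishes from the boundary data alone. The correct trace parities (opposite to what you wrote) are that \emph{even}-order $\partial_3$ of $u_3$ and $\theta$ vanish at $x_3=0$ while \emph{odd}-order $\partial_3$ of $u_h$ vanishes; in particular $\partial_3^3 u_3=-\partial_3^2(\nabla_h\cdot u_h)$ is unconstrained on the boundary, and no parity argument forces $\partial_3^3 p|_{x_3=0}=0$ without further input. The paper kills this term in two stages: it first proves a separate $H^2$ bound (Lemma~\ref{lem0}, which needs no compatibility) to control $\int_0^t\|\nabla_h u_h\|_{L^\infty}\,\dd\tau$, then runs a Gr\"onwall argument on the restriction of the $\theta$-equation to $\{x_3=0\}$ to propagate $\partial_3^2\theta|_{x_3=0}=0$, and finally reads off from the $u_3$-equation that $\partial_3^3 p|_{x_3=0}=\partial_3^2\theta|_{x_3=0}=0$. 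Thus the compatibility hypothesis is used not for the $\partial_3^3\theta$ integration by parts you highlight (that boundary piece is already harmless because $u_3|_{x_3=0}=0$), but to eliminate the pressure boundary contribution in the $\dot H^3$ velocity estimate; without this link the energy identity at the top order does not close.
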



\begin{thm}\label{T1.2}(Decay)  Let $\frac{9}{10}< \sigma<1.$ Assume that $(u_0,\theta_0)\in H^3(\br_+^3)$ satisfy the conditions stated in Theorem \ref{T1.1} and
\begin{eqnarray}
&&\|u_0\|_{H^3 }+\|\theta_0\|_{H^3 }\leqslant\varepsilon,\label{I1.6}\\
&&\|\Lambda_h^{-\sigma} u_0\|_{L^2}+\|\Lambda_h^{-\sigma}\theta_0\|_{L^2}\leqslant\varepsilon,\label{I1.7}\\
&&\|\partial_3\Lambda_h^{-\sigma} u_0\|_{L^2}+\|\partial_3\Lambda_h^{-\sigma}\theta_0\|_{L^2}\leqslant\varepsilon,\label{I1.8}
\end{eqnarray}
for some  sufficiently small $\varepsilon>0.$ then for the initial boundary value problem \eqref{E1.9} with boundary conditions \eqref{N1.5} and \eqref{N1.6}, there exists a unique global solution $(u,\theta)$, which satisfies,
for a constant $C>0$ and for any $  \frac{1}{2}-\frac{\sigma}{2}\leqslant\delta<\frac{\sigma}{8}-\frac{1}{16} $ and all $t\geqslant0,$
\begin{eqnarray*}
&&\|u(t)\|_{H^3 }+\|\theta(t)\|_{H^3 }\leqslant C\varepsilon, \\
&&\|\Lambda_h^{-\sigma} u(t)\|_{L^2}+\|\Lambda_h^{-\sigma}\theta(t)\|_{L^2}\leqslant C\varepsilon, \\
&&\|u(t)\|_{L^2},\|\theta(t)\|_{L^2}\leqslant C\varepsilon (1+t)^{-\frac{\sigma}{2}+\delta},\\
&&\|\partial_3u_h(t)\|_{L^2},\|\partial_3\theta(t)\|_{L^2}\leqslant C\varepsilon (1+t)^{-\frac{\sigma}{2}+3\delta},\\
&&\|\p_3 u_3\|_{L^2}, \|\nabla_h u(t)\|_{L^2},\|\nabla_h\theta(t)\|_{L^2}\leqslant C\varepsilon (1+t)^{-\frac{\sigma+1}{2}+\delta},
\end{eqnarray*}
here we   always denote the operator $\Lambda_h^{-\sigma}$ to be the Fourier multiplier with symbol $|\xi_h|^{-\sigma}$ for  horizontal components of vector $\xi=(\xi_1,\xi_2,\xi_3)=(\xi_h,\xi_3)$.
\end{thm}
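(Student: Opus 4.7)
The plan is to combine the uniform $H^3$ control from Theorem \ref{T1.1} with a spectral analysis of the linearized system and a frequency-splitting argument, closed by a bootstrap on decay norms. The key structural feature is the buoyancy coupling between $u_3$ and $\theta$: it turns horizontal-only dissipation into effective damping of all frequencies, and the negative horizontal Sobolev information on the data then converts this damping into algebraic time decay.

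\textbf{Fourier reduction and linear kernel.} The boundary data \eqref{N1.5}--\eqref{N1.6} together with the compatibility $\p_3^2\theta_0=0$ on $\{x_3=0\}$ allow an extension of $(u_h,u_3,\theta)$ to all of $\br^3$ by (even, odd, odd) reflection in $x_3$; Fourier transforming in $x_h$ and applying cosine/sine transforms in $x_3$ puts the system on $\br^3$ at the frequency level. Eliminating the pressure via $\Delta p=\p_3\theta$, the linearized $(\hat u_3,\hat\theta)$ system reads
\begin{align*}
\p_t\hat u_3+|\xi_h|^2\hat u_3&=\frac{|\xi_h|^2}{|\xi|^2}\hat\theta,\\
\p_t\hat\theta+|\xi_h|^2\hat\theta&=-\hat u_3,
\end{align*}
with eigenvalues $-|\xi_h|^2\pm\ri\,|\xi_h|/|\xi|$, so the linear semigroup $e^{t\sl}$ has kernel that decays like $e^{-c|\xi_h|^2 t}$ uniformly in $\xi_3$. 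Splitting the Plancherel integral at $|\xi_h|^2 t\sim 1$ then yields the basic semigroup bounds $\|e^{t\sl}f\|_{L^2}\le C(1+t)^{-\sigma/2}\bigl(\|\Lambda_h^{-\sigma}f\|_{L^2}+\|f\|_{L^2}\bigr)$ and $\|\nabla_h e^{t\sl}f\|_{L^2}\le C(1+t)^{-(\sigma+1)/2}\bigl(\|\Lambda_h^{-\sigma}f\|_{L^2}+\|f\|_{L^2}\bigr)$. The total absence of damping in the $\xi_3$ direction is what ultimately forces the slower rate $(1+t)^{-\sigma/2+3\delta}$ for $\|\p_3(u_h,\theta)\|_{L^2}$.

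\textbf{Propagation of negative horizontal Sobolev norms.} Apply $\Lambda_h^{-\sigma}$ to \eqref{E1.9} and pair with $\Lambda_h^{-\sigma}(u,\theta)$ in $L^2$: the buoyancy and stratification terms cancel, while the dissipation produces $\|\nabla_h\Lambda_h^{-\sigma}(u,\theta)\|_{L^2}^2$. The nonlinear remainders $\langle\Lambda_h^{-\sigma}(u\cdot\nabla u),\Lambda_h^{-\sigma}u\rangle$ and $\langle\Lambda_h^{-\sigma}(u\cdot\nabla\theta),\Lambda_h^{-\sigma}\theta\rangle$ are bounded by product estimates in negative-Sobolev spaces, anisotropic Ladyzhenskaya/Gagliardo-Nirenberg inequalities on $\br_+^3$, and the $H^3$ smallness of Theorem \ref{T1.1}, which give a right-hand side dominated by $C\varepsilon\|\nabla_h(u,\theta)\|_{H^3}\|\Lambda_h^{-\sigma}(u,\theta)\|_{L^2}$. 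Integrating in time and invoking the integrability of $\|\nabla_h(u,\theta)\|_{H^3}^2$ from Theorem \ref{T1.1} together with \eqref{I1.7} yields $\|\Lambda_h^{-\sigma}(u,\theta)(t)\|_{L^2}\le C\varepsilon$ uniformly in $t$; the same strategy applied to $\p_3\Lambda_h^{-\sigma}$ (which commutes with horizontal dissipation) combined with \eqref{I1.8} produces $\|\p_3\Lambda_h^{-\sigma}(u,\theta)(t)\|_{L^2}\le C\varepsilon$.

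\textbf{Bootstrap for time decays, and the main obstacle.} Define
\begin{align*}
\cM(t):=\sup_{0\le\tau\le t}\Big\{&(1+\tau)^{\frac{\sigma}{2}-\delta}\|(u,\theta)(\tau)\|_{L^2}+(1+\tau)^{\frac{\sigma}{2}-3\delta}\|(\p_3u_h,\p_3\theta)(\tau)\|_{L^2}\\
&+(1+\tau)^{\frac{\sigma+1}{2}-\delta}\|(\p_3u_3,\nabla_hu,\nabla_h\theta)(\tau)\|_{L^2}\Big\}.
\end{align*}
By Duhamel's formula and the semigroup bounds of the first step, each piece of $\cM(t)$ splits into a linear contribution bounded by $C\varepsilon$ (via the two preceding steps and the hypotheses \eqref{I1.7}--\eqref{I1.8}) and a nonlinear contribution controlled by interpolation against the uniform $H^3$ smallness, ultimately of the form $C\cM(t)^2\int_0^t(1+t-s)^{-\alpha}(1+s)^{-\beta}\dd s$. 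The constraints $\tfrac{9}{10}<\sigma<1$ and $\tfrac12-\tfrac{\sigma}{2}\le\delta<\tfrac{\sigma}{8}-\tfrac1{16}$ are exactly what make every such convolution integral bounded by the required weight, so the inequality $\cM(t)\le C\varepsilon+C\cM(t)^2$ closes for small $\varepsilon$ and gives all the decay rates of the theorem; the rate for $\p_3u_3$ follows from $\p_3u_3=-\nabla_h\cdot u_h$. The principal obstacle is the total lack of vertical dissipation: nonlinearities involving $\p_3u_h$ or $\p_3\theta$ carry only the slower rate $(1+t)^{-\sigma/2+3\delta}$, so they must be systematically paired with horizontally-differentiated factors (which enjoy the faster $(1+t)^{-(\sigma+1)/2+\delta}$) for the Duhamel convolutions to be time-integrable, and it is this delicate balance that narrows the admissible window for $(\sigma,\delta)$.
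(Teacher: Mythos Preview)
Your proposal follows essentially the same route as the paper: spectral analysis of the linearized problem via Fourier in $x_h$ and cosine/sine transforms in $x_3$ (the paper's Proposition~\ref{prop2.1}), energy-method propagation of $\|\Lambda_h^{-\sigma}(u,\theta)\|_{L^2}$, and a bootstrap on the decay norms closed through Duhamel's formula and time-convolution estimates of the nonlinear terms.

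One point of divergence deserves comment. You claim to propagate $\|\p_3\Lambda_h^{-\sigma}(u,\theta)(t)\|_{L^2}\le C\varepsilon$ by ``the same strategy''. The paper does \emph{not} do this, and it is not among the theorem's conclusions. The hypothesis \eqref{I1.8} is used only once, to control the \emph{linear} Duhamel piece $\p_3 e^{t\sl}v_0$ via Proposition~\ref{prop2.1}(2); the nonlinear Duhamel integrals for $\p_3(u_h,\theta)$ are handled instead by direct $L^p$--$L^q$ heat-kernel estimates (Lemma~\ref{lem3}) applied to the seven explicit kernel terms $J_1,\dots,J_7$ of Proposition~\ref{prop4.1} (one computes $\|\xi_3 J_k\|_{L^2}$), never by invoking $\|\Lambda_h^{-\sigma}\p_3 N(s)\|$. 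Carrying out your proposed $\p_3\Lambda_h^{-\sigma}$ energy estimate on $\br_+^3$ would produce a pressure boundary term (since $\p_3 u_3|_{x_3=0}=-\nabla_h\cdot u_h|_{x_3=0}$ need not vanish) and would require commutator bounds for $[\Lambda_h^{-\sigma},u\cdot\nabla]$ acting on $\p_3 u$ at negative horizontal regularity; neither is insurmountable, but neither is needed. Dropping this step and using the $L^p$--$L^q$ route for the $\p_3$-nonlinearity, exactly as the paper does, keeps the rest of your argument intact.
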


\begin{rem} We will find the decay for the linearized equations of \eqref{E1.9} (see Proposition \ref{prop2.1} ) is the same as the Cauchy problem stated in \cite{JYWu}. However,  the large-time behavior of the solutions for the nonlinear initial boundary value problem is quite different from that of the Cauchy problem, where the decay rates depend on the high regularity of the solutions, whereas we only have $\|(u,\theta)\|_{H^3}$ under the boundary conditions \eqref{N1.5} and \eqref{N1.6} in Theorem \ref{T1.1}. In addition, the anisotropic dissipation here makes the decay problem even more challenging, and the decay rates presented in Theorem \ref{T1.2} also reflect the extra smoothing due to the coupling in the system.
\end{rem}

The proof of Theorem \ref{T1.2} is extremely elaborate. We note that direct energy estimates are not adequate. We would like to resort the integral representation of (\ref{E1.9}). First, we take the Fourier transform in $x_h = (x_1, x_2)$ and Fourier cosine and sine transforms in $x_3$ of the linearized equations of (\ref{E1.9}), and obtain the optimal decay rates of the linearized initial-boundary value problem. Then we take advantage of Duhamel's principle to obtain the formula of the solution for the nonlinear system. This form relies on seven kernel functions which are degenerate and anisotropic in the frequency space. Finally, we obtain the decay rates of $(u,\theta)$ and its derivatives via the integral form. The detailed estimates are provided in Section 3.

The rest of this paper is arranged as follows: in Section \ref{S1}, we give the proof of the nonlinear stability result in Theorem \ref{T1.1} .  One presents a solution formula for the linearized problem in terms of Fourier cosine and sine transforms in Section \ref{S2}. The decay rates, our result stated in  Theorem \ref{T1.2} are established  in the last section. To simplify the notation, we shall write ${\| f \|_{{L^p}}}$ for ${\| f \|_{{L^p}(\mathbb{R}^{3}_{+})}}$, ${\| f \|_{L_{{x_i}}^p}}$ for the $L^2$-norm in $x_i$-variable, $\partial_i$ for $\partial_{x_i}(i=1,2,3)$, and $\nabla_h=(\partial_1,\partial_2)$.

\section{Nonlinear stability}\label{S1}

In this section, we prove that the global stability for the initial  problem \eqref{E1.9} with boundary conditions \eqref{N1.5} and \eqref{N1.6} . To verify Theorem \ref{T1.1}, We first introduce several significant tools to be used in the proof. The first lemma is the calculus inequalities in Sobolev spaces(see \cite{wx3}).
\begin{lem}\label{lem000}
	For all $m \in  Z^+ \cup \{0\}$, there exists $c > 0$ such that, for all $u, v \in L^\infty\cap H^m(\br^n_+)$,
	\begin{equation*}
	\begin{aligned}
	\|uv\|_{H^m}\le c(\|u\|_{L^\infty}\|D^m v\|_{L^2}+\|D^m u\|_{L^2}\|v\|_{L^\infty}).
	\end{aligned}
	\end{equation*}
\end{lem}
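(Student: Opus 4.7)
The plan is to reduce the general $m$ case to the standard Gagliardo--Nirenberg interpolation inequality on $\br^n_+$ combined with the Leibniz rule and H\"older's inequality. The case $m=0$ is immediate: $\|uv\|_{L^2} \le \|u\|_{L^\infty}\|v\|_{L^2}$, which is already controlled by the right-hand side.

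For $m \ge 1$, I would expand $D^m(uv)$ by the Leibniz rule as a finite sum of terms of the form $D^\alpha u \cdot D^\beta v$ with $|\alpha|+|\beta|=m$. Setting $j = |\alpha|$, H\"older's inequality in $L^2$ with exponents $2m/j$ and $2m/(m-j)$ (interpreting the endpoint cases $j=0$ and $j=m$ as $\infty$, which return the trivial bound) yields
$$\|D^\alpha u \cdot D^\beta v\|_{L^2} \le \|D^\alpha u\|_{L^{2m/j}} \|D^\beta v\|_{L^{2m/(m-j)}}.$$
I then apply the Gagliardo--Nirenberg interpolation inequality
$$\|D^j w\|_{L^{2m/j}} \le C\|w\|_{L^\infty}^{1-j/m}\|D^m w\|_{L^2}^{j/m}, \qquad 0 \le j \le m,$$
to each factor to obtain a bound proportional to
$$\|u\|_{L^\infty}^{1-j/m}\|D^m u\|_{L^2}^{j/m}\,\|v\|_{L^\infty}^{j/m}\|D^m v\|_{L^2}^{1-j/m} = A^{1-j/m}B^{j/m},$$
where $A := \|u\|_{L^\infty}\|D^m v\|_{L^2}$ and $B := \|D^m u\|_{L^2}\|v\|_{L^\infty}$. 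The weighted Young inequality $A^{1-j/m}B^{j/m} \le (1-j/m)A + (j/m)B$ then produces the claimed bound termwise, and summing over the finitely many multi-indices of order $m$ completes the proof.

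The principal obstacle is establishing the Gagliardo--Nirenberg inequality on the half-space $\br^n_+$ rather than on the whole space $\br^n$. The cleanest route is to invoke a universal extension operator (for example Stein's) that extends $w \in H^m(\br^n_+) \cap L^\infty(\br^n_+)$ to $\tilde w \in H^m(\br^n) \cap L^\infty(\br^n)$ with comparable norms, after which the classical inequality on $\br^n$ applies to $\tilde w$ and restricts back to $\br^n_+$. An alternative is a direct argument using a partition of unity and local reflection across $\{x_n=0\}$, which preserves the dependence of the constants only on $n$ and $m$; either route suffices for the applications of the lemma in the forthcoming nonlinear estimates.
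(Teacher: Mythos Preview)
Your proposal is correct and essentially identical to the paper's proof: both use the Leibniz rule, H\"older's inequality with exponents $2m/|\beta|$ and $2m/(m-|\beta|)$, the Gagliardo--Nirenberg interpolation $\|D^j w\|_{L^{2m/j}} \le C\|w\|_{L^\infty}^{1-j/m}\|D^m w\|_{L^2}^{j/m}$, and Young's inequality, with the half-space case handled by extension to $\br^n$. The only cosmetic difference is that the paper extends $u,v$ first (via an explicit higher-order reflection operator) and carries out the entire product estimate on $\br^n$, whereas you perform the Leibniz/H\"older step directly on $\br^n_+$ and invoke an extension (Stein or reflection) only to transfer Gagliardo--Nirenberg from $\br^n$ to $\br^n_+$.
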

\begin{proof}
	For any $u\in L^\infty\cap H^m(\br^n_+)$, we define the extension operator $\tilde{u}$ by $$\tilde{u}:=
	\begin{cases}
	u(x', x_n), &x_n\geq 0\\
	\sum\limits_{i = 1}^m {c_i u(x', -\frac{x_n}{i})}, &x_n < 0\\
	\end{cases}
	$$
	where $\sum\limits_{i = 1}^m c_i(-\frac{1}{i})^k =1, \forall k=0, \cdot\cdot\cdot m-1$. Then we have
	$\tilde{u}\in L^\infty\cap H^m(\br^n)$, which satisfies $$\|\tilde{u}\|_{L^\infty\cap H^m(\br^n)}\leq C(n,m)\|u\|_{L^\infty\cap H^m(\br^n_+)}.$$
	By H\"{o}lder's inequality and the Leibnitz differentiation formula, we have
	$$\|D^\alpha(\tilde{u}\tilde{v})\|_{L^2} \leq C_\alpha \sum\limits_{\beta\leq \alpha}\|D^\beta \tilde{u} D^{\alpha-\beta}\tilde{v}\|_{L^2}
	\leq C_\alpha \sum\limits_{\beta\leq \alpha}\|D^\beta \tilde{u}\|_{L^{\frac{2m}{|\beta|}}} \|D^{\alpha-\beta} \tilde{v}\|_{L^{\frac{2m}{|\alpha-\beta|}}} $$
	The Nagliardo-Nirenberg inequality
	$$\|D^j \tilde{u}\|_{L^{\frac{2m}{j}}} \leq C_m \|\tilde{u}\|_{L^\infty}^{1-\frac{j}{m}} \|D^m \tilde{u}\|_{L^2}^{\frac{j}{m}}, \ \ \ \ 0\leq j \leq m,$$
	implies that
	\begin{equation*}
	\begin{aligned}
	\|D^\alpha(\tilde{u}\tilde{v})\|_{L^2}&\leq C_\alpha \sum\limits_{\beta\leq \alpha} C_{\alpha\beta} \|\tilde{u}\|_{L^\infty}^{1-\frac{|\beta|}{m}} \|D^m \tilde{u}\|_{L^2}^{\frac{|\beta|}{m}} \|\tilde{v}\|_{L^\infty}^{1-\frac{|\alpha-\beta|}{m}} \|D^m \tilde{v}\|_{L^2}^{\frac{|\alpha-\beta|}{m}} \\
	&\leq C_\alpha \sum\limits_{\beta\leq \alpha} C_{\alpha\beta} (\|\tilde{u}\|_{L^\infty}\|D^m \tilde{v}\|_{L^2})^{\frac{|\alpha-\beta|}{m}} (\|\tilde{v}\|_{L^\infty}\|D^m \tilde{u}\|_{L^2})^{\frac{|\beta|}{m}} \\
	& \leq C_m(\|\tilde{u}\|_{L^\infty}\|D^m \tilde{v}\|_{L^2}+\|D^m \tilde{u}\|_{L^2}\|\tilde{v}\|_{L^\infty}).
	\end{aligned}
	\end{equation*}
	Summing over all $|\alpha| \leq m$ gives calculus inequality
	$$\|\tilde{u}\tilde{v}\|_{H^m}\leq C_m(\|\tilde{u}\|_{L^\infty}\|D^m \tilde{v}\|_{L^2}+\|D^m \tilde{u}\|_{L^2}\|\tilde{v}\|_{L^\infty}).$$
	So we obtain
	$$ \|uv\|_{H^m}\le c(\|u\|_{L^\infty}\|D^m v\|_{L^2}+\|D^m u\|_{L^2}\|v\|_{L^\infty}).$$
\end{proof}

The following lemma states an interpolation inequality in 1D domain for $H^1-$norm and the anisotropic upper bounds for the integral of the triple product which is similar to  the inequality stated of Lemma 5.1 in \cite{JYWu}.
\begin{lem}\label{lem00}
 Let $\Omega$ is unbounded domain in $\br$, for any $f\in H^1(\Omega)$, then one has
\begin{equation}\label{II2.1}
\|f\|_{L^\infty}\leqslant C\|f\|^{\frac{1}{2}}_{L^2}\| f'\|^{\frac{1}{2}}_{L^2}.
\end{equation}
Moreover,for any $f, g,h \in L^{2}(\br^3_+)$ with $\partial_{1}f, \partial_{2}g, \partial_{3}h\in L^{2}(\br^3_+)$. Then
\begin{equation}\label{II2.2}
\begin{aligned}
\left|\int_{\br^3_+} f g h \dd x\right| \leqslant C\|f\|_{L^{2}}^{\frac{1}{2}}\|\partial_{1} f\|_{L^{2}}^{\frac{1}{2}}\|g\|_{L^{2}}^{\frac{1}{2}}\|\partial_{2}
g\|_{L^{2}}^{\frac{1}{2}}\|h\|_{L^{2}}^{\frac{1}{2}}\|\partial_{3} h\|_{L^{2}}^{\frac{1}{2}}.
\end{aligned}
\end{equation}
\end{lem}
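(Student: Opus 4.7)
The two inequalities are classical anisotropic estimates, so my strategy is to first establish the 1D Agmon inequality \eqref{II2.1} directly from the fundamental theorem of calculus, and then bootstrap it to get the triple-product inequality \eqref{II2.2} by reducing the estimate to an iterated one-dimensional calculation.

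For \eqref{II2.1}, since $\Omega$ is an unbounded interval in $\br$ and $f\in H^1(\Omega)$, a standard density argument lets me assume $f$ is smooth and vanishes at the ends of $\Omega$ (either infinity, or extend $f$ by reflection so that it decays at $\pm\infty$ in a copy of $\br$). Then for any $x\in\Omega$ I write $f(x)^2=2\int_{-\infty}^{x}f(y)f'(y)\,\dd y$, and Cauchy--Schwarz yields $|f(x)|^2\leqslant 2\|f\|_{L^2}\|f'\|_{L^2}$, which is \eqref{II2.1} with $C=\sqrt{2}$.

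For \eqref{II2.2}, I would peel off the three variables one by one. First, fix $(x_2,x_3)$ and apply \eqref{II2.1} in the $x_1$-direction to bound $\|f(\cdot,x_2,x_3)\|_{L^\infty_{x_1}}$ by $\|f\|_{L^2_{x_1}}^{1/2}\|\partial_1 f\|_{L^2_{x_1}}^{1/2}$; combined with $\int|gh|\,\dd x_1\leqslant\|g\|_{L^2_{x_1}}\|h\|_{L^2_{x_1}}$ this gives
\begin{equation*}
\int_\br |fgh|\,\dd x_1\leqslant C\,\|f\|_{L^2_{x_1}}^{1/2}\|\partial_1 f\|_{L^2_{x_1}}^{1/2}\|g\|_{L^2_{x_1}}\|h\|_{L^2_{x_1}}.
\end{equation*}
Next, I integrate in $x_2$ and introduce the function $G(x_2,x_3):=\|g(\cdot,x_2,x_3)\|_{L^2_{x_1}}$; applying \eqref{II2.1} to $G(\cdot,x_3)^2$ in the $x_2$-direction (via the identity $G^2=\int|g|^2\,\dd x_1$ and Cauchy--Schwarz) yields $\|G(\cdot,x_3)\|_{L^\infty_{x_2}}^2\leqslant C\,\|g(\cdot,\cdot,x_3)\|_{L^2_{x_h}}\|\partial_2 g(\cdot,\cdot,x_3)\|_{L^2_{x_h}}$. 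Pulling this sup out and applying Cauchy--Schwarz in $x_2$ to the remaining factors produces
\begin{equation*}
\int_{\br^2}|fgh|\,\dd x_1\dd x_2\leqslant C\,\|f\|_{L^2_{x_h}}^{1/2}\|\partial_1 f\|_{L^2_{x_h}}^{1/2}\|g\|_{L^2_{x_h}}^{1/2}\|\partial_2 g\|_{L^2_{x_h}}^{1/2}\|h\|_{L^2_{x_h}}.
\end{equation*}
Finally, integrating in $x_3\in(0,\infty)$ and applying Hölder with exponents $(2,2,\infty)$ to the three $x_3$-dependent factors, together with the one-sided Agmon-type bound $\sup_{x_3>0}\|h(\cdot,\cdot,x_3)\|_{L^2_{x_h}}^2\leqslant 2\|h\|_{L^2}\|\partial_3 h\|_{L^2}$ (obtained from $\|h(\cdot,\cdot,x_3)\|_{L^2_{x_h}}^2=-2\int_{x_3}^\infty\!\int_{\br^2} h\,\partial_3 h\,\dd x_h\,\dd y$), delivers \eqref{II2.2}.

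The only mild subtlety is the half-space structure in the $x_3$-direction: \eqref{II2.1} is stated for $\Omega$ unbounded in $\br$, and when applying it to the $x_3$-variable on $(0,\infty)$ I cannot bound a generic $L^\infty_{x_3}$ norm without boundary information, so I instead use the directly proved one-sided identity above, which only needs $h,\partial_3 h\in L^2(\br^3_+)$ and no trace condition. With that in hand the argument is otherwise routine.
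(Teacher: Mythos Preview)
Your argument is correct and follows essentially the same strategy as the paper: prove the one-dimensional Agmon inequality from the fundamental theorem of calculus and then iterate it variable by variable, combined with H\"older/Cauchy--Schwarz, to obtain the anisotropic triple-product bound. The only cosmetic difference is the order in which the variables are peeled off (you do $x_1$, then $x_2$, then $x_3$; the paper takes $x_3$ out first, then $x_1$ and $x_2$), and your extra care about the half-space direction is unnecessary since \eqref{II2.1} on $\br_+$ uses only decay at infinity, not a boundary condition at $0$---but your one-sided identity handles it correctly in any case.
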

\begin{proof} As $\Omega$ is an unbounded domain in $\br$, without loss of generality, let $\Omega=\br_+.$  For any $f\in H^1(\br_+)$, it is easy to check that $\lim\limits_{x\to+\infty} f(x)=0.$ Thus we know for any $x,A\in \br_+$
\begin{equation*}
\begin{aligned}
|f(x)|^2-|f(A)|^2&=2\int_A^xf'(t)f(t)\dd t\\
&\leqslant2 \int_0^\infty|f'(x)||f(x)|\dd x\leqslant2\|f\|_{L^2}\| f'\|_{L^2}
\end{aligned}
\end{equation*}
Let $A\to\infty$, we can conclude that the inequality \eqref{II2.1} holds.
To prove  the inequality \eqref{II2.2}, by the inequality \eqref{II2.1} frequently, we can calculate
\begin{equation*}
\begin{aligned}
\left|\int_{\br^3_+} f g h \dd x\right| &\leqslant \int_{\br^2}|fg|_{L^1_{x_3}} \|h\|_{L^\infty_{x_3}}\dd x_h\leqslant \|h\|_{L^\infty_{x_3}L^2_{x_h}}\left(\int_{\br^2}\|fg\|^2_{L^1_{x_3}}  \dd x_h\right)^\frac{1}{2}\\
&\leqslant C\|h\|_{L^{2}}^{\frac{1}{2}}\|\partial_{3} h\|_{L^{2}}^{\frac{1}{2}}\left(\int_{\br}\int_{\br}\|f\|_{L^2_{x_3}}^2\|g\|^2_{L^2_{x_3}}  \dd x_1\dd x_2\right)^\frac{1}{2}\\
&\leqslant   C\|h\|_{L^{2}}^{\frac{1}{2}}\|\partial_{3} h\|_{L^{2}}^{\frac{1}{2}} \|f\|_{L^\infty_{x_1} L^2_{x_3,x_2}}\|g\| _{L^\infty_{x_2} L^2_{x_3,x_1} }\\
&\leqslant  C\|f\|_{L^{2}}^{\frac{1}{2}}\|\partial_{1} f\|_{L^{2}}^{\frac{1}{2}}\|g\|_{L^{2}}^{\frac{1}{2}}\|\partial_{2}
g\|_{L^{2}}^{\frac{1}{2}}\|h\|_{L^{2}}^{\frac{1}{2}}\|\partial_{3} h\|_{L^{2}}^{\frac{1}{2}}.
\end{aligned}
\end{equation*}

\end{proof}

A necessary step in the proof of Theorem \ref{T1.1} is the following global $H^2$
-bound.
\begin{lem}\label{lem0}. Assume that initial data $(u_0,\theta_0)\in H^2(\br_+^3)$ with boundary conditions \eqref{N1.5} and \eqref{N1.6} , satisfies $\nabla\cdot u_0=0.$ Then there exists $\varepsilon>0$ such that, if
	$$\|u_0\|_{H^2}+\|\theta_0\|_{H^2}\leqslant\varepsilon,$$
	then the system \eqref{E1.9} with boundary conditions \eqref{N1.5} and \eqref{N1.6}  has a unique global solution $(u,\theta)\in L^\infty(0,\infty; H^2(\br_+^3))$ satisfying, for a constant $C>0$ and for all $t>0,$
	\begin{equation}\label{leI2.3}
		\begin{aligned}
		\|u\|^2_{H^2}+\|\theta\|^2_{H^2}+\int_0^t\left(\|\nabla_h u(\tau)\|^2_{H^2}+\|\nabla_h\theta(\tau)\|^2_{H^2}\right)\dd\tau\leqslant C\varepsilon^2.
		\end{aligned}
	\end{equation}
\end{lem}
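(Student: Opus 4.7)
The plan is to close an $H^2$-energy bootstrap. Local well-posedness in $H^2$ follows from the standard Galerkin scheme adapted to the Navier-slip and Dirichlet conditions, so the real task is the a priori bound on
\begin{equation*}
E(t) := \|u(t)\|_{H^2}^2 + \|\theta(t)\|_{H^2}^2 + \int_0^t \bigl(\|\nabla_h u(\tau)\|_{H^2}^2 + \|\nabla_h \theta(\tau)\|_{H^2}^2\bigr)\,\dd\tau.
\end{equation*}
Under the bootstrap hypothesis $E(t)\le C_0\varepsilon^2$ on $[0,T]$, I will improve it to $E(t)\le \tfrac{1}{2}C_0\varepsilon^2$, which by continuity extends the solution globally and yields (2.3).

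For the $L^2$ layer, take inner products with $u$ and $\theta$ and add. The advection integrals vanish by $\nabla\cdot u=0$ together with $u_3|_{x_3=0}=0$; the pressure term vanishes similarly; and the buoyancy $+\int \theta u_3\,\dd x$ from the momentum equation exactly cancels the source $+\int u_3\theta\,\dd x$ from the temperature equation. Horizontal integration by parts of $\Delta_h$ produces no traces at $\{x_3=0\}$, giving $\tfrac{1}{2}\tfrac{\dd}{\dd t}(\|u\|_{L^2}^2+\|\theta\|_{L^2}^2)+\|\nabla_h u\|_{L^2}^2+\|\nabla_h\theta\|_{L^2}^2=0$.

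For the $H^2$ layer, I apply $\partial^\alpha$ with $|\alpha|\le 2$ and pair with $\partial^\alpha u,\partial^\alpha\theta$. The tangential derivatives $\partial_1,\partial_2$ preserve the boundary conditions and the structural cancellations of the $L^2$ step. For vertical derivatives, the identities forced by div-freeness and the Navier condition, namely $\partial_3 u_3=-\nabla_h\cdot u_h$, $\partial_3 u_h|_{x_3=0}=0$, and $\theta|_{x_3=0}=0$ together with their horizontal derivatives, kill every boundary integral produced by integration by parts of $\Delta_h$. The pressure at order $|\alpha|\le 2$ is dealt with via the elliptic equation $-\Delta p=\partial_i\partial_j(u_iu_j)-\partial_3\theta$ with the induced Neumann data at $\{x_3=0\}$. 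The heart of the argument is the control of the trilinear terms
\begin{equation*}
T_{\alpha,\beta}=\int_{\br^3_+}\partial^\beta u\cdot\nabla\partial^{\alpha-\beta}(u,\theta)\cdot\partial^\alpha(u,\theta)\,\dd x,\qquad 0\le\beta\le\alpha,\ |\alpha|\le 2.
\end{equation*}
To bound these I apply the anisotropic H\"older inequality (2.2), assigning the $\partial_3$-factor to the term whose $\partial_3 L^2$-norm is already contained in $\|(u,\theta)\|_{H^2}$, and the $\partial_1,\partial_2$-factors to terms whose horizontal gradients appear in the dissipation $\|\nabla_h u\|_{H^2}^2+\|\nabla_h\theta\|_{H^2}^2$. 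When a piece involves $u_3$ with too few horizontal derivatives, I replace it by $u_3(x_h,x_3)=-\int_0^{x_3}\nabla_h\cdot u_h(x_h,s)\,\dd s$ (valid because $u_3|_{x_3=0}=0$), trading vertical dependence for horizontal content. This yields $\sum|T_{\alpha,\beta}|\le C\|(u,\theta)\|_{H^2}\bigl(\|\nabla_h u\|_{H^2}^2+\|\nabla_h\theta\|_{H^2}^2\bigr)\le C\sqrt{C_0}\,\varepsilon\bigl(\|\nabla_h u\|_{H^2}^2+\|\nabla_h\theta\|_{H^2}^2\bigr)$, and for $\varepsilon$ small everything is absorbed into the dissipation on the left, closing the bootstrap.

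The main obstacle is the worst trilinear piece, typified by $\int\partial^\alpha u\cdot\nabla u\cdot\partial^\alpha u\,\dd x$ and $\int u\cdot\nabla\partial^\alpha u\cdot\partial^\alpha u\,\dd x$ with $|\alpha|=2$: with only horizontal dissipation at hand, any vertical derivative that lands on the middle factor must be redistributed, via integration by parts and the div-free relation $\partial_3 u_3=-\nabla_h\cdot u_h$, onto a factor where $\nabla_h$-dissipation is available. Lemma 2.2 is calibrated exactly to this anisotropic redistribution, and ensures the nonlinear error is quadratic in the dissipation with the small prefactor $\varepsilon$. Once the trilinear estimates close, integration in time produces $E(t)\le C\varepsilon^2$ uniformly in $T$, improving the bootstrap and proving (2.3).
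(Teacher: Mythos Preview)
Your proposal is correct and follows essentially the same strategy as the paper: a bootstrap on the $H^2$ energy, with the trilinear terms controlled by the anisotropic triple-product inequality (Lemma~\ref{lem00}) together with the substitution $\partial_3 u_3=-\nabla_h\cdot u_h$, yielding $E(t)\le E(0)+CE(t)^{3/2}$. The only notable tactical difference is that the paper tests with $\Delta$ rather than with all $\partial^\alpha$, $|\alpha|\le 2$ (using $\|(u,\theta)\|_{H^2}^2\sim\|(u,\theta)\|_{L^2}^2+\|(\Delta u,\Delta\theta)\|_{L^2}^2$), which makes the pressure and buoyancy boundary contributions vanish in one stroke via $\Delta u_3|_{x_3=0}=\Delta_h u_3+\partial_3^2 u_3=0$ and dispenses with the integral representation of $u_3$ you propose.
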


\begin{proof} Since the local (in time) well-posedness of (1.3) can be established via a standard approach (see \cite{wx3}), our attention is focused on the global bound of $(u, \theta)$. The framework of the proof is the bootstrapping argument. Define the energy functional $\tilde{E}(t)$ by
$$\tilde{E}(t) = \mathop {\sup }\limits_{0 \leqslant \tau  \leqslant t} \{ {\| {u(\tau )} \|_{{H^2}}^2 + \| {\theta (\tau )} \|_{{H^2}}^2} \} + 2 \int_0^t {\| {{\nabla _h}u(\tau )} \|_{{H^2}}^2d\tau }  + 2\int_0^t {\| {\nabla _h}\theta (\tau )\|_{{H^2}}^2d\tau } .$$
Our main efforts are devoted to showing that, for a constant $C>0$ and for $t>0$,
\begin{equation}\label{w-1}
\tilde{E}(t) \leqslant \tilde{E}(0) + C{\tilde{E}^{{\textstyle{3 \over 2}}}}(t).
\end{equation}
Once (\ref{w-1}) is shown, then a direct application of the bootstrapping argument implies that, if
\begin{equation}\label{w-2}
\tilde{E}(0) = \| {({u_0},{\theta _0})}\|_{{H^2}}^2 \leqslant \varepsilon^2 : = \frac{1}{{16{C^2}}},
\end{equation}
then
\begin{equation}\label{w-3}
\tilde{E}(t) \leqslant \frac{1}{{8{C^2}}} \quad t>0.
\end{equation}
In fact, if we make the ansatz that
\begin{equation}\label{w-4}
\tilde{E}(t) \leqslant \frac{1}{{4{C^2}}}.
\end{equation}
Inserting (\ref{w-4}) in (\ref{w-1}) and invoking (\ref{w-2}) yields
$$\tilde{E}(t) \leqslant \tilde{E}(0) + \frac{1}{2}\tilde{E}(t),$$
that is
$$\tilde{E}(t) \leqslant 2\tilde{E}(0) \leqslant \frac{1}{{8{C^2}}},$$
which is only half of the bound in the ansatz in (\ref{w-4}). Then the bootstrapping argument implies (\ref{w-3}).
Next, we prove the energy inequality (\ref{w-1}). Due to the properties of the singular inequalities and the interpolations, one have got  the equivalence of the norms
$$\|(u,\theta)\|_{H^{2}}^{2}\sim \|(u,\theta)\|_{L^{2}}^{2}+\|(\Delta u,\Delta\theta)\|_{L^{2}}^{2},$$
here  the norm $\|(u,\theta)\|_{X}=\|u\|_{X}+\|\theta\|_{X}.$
It suffices to bound $L^2$ and the homogeneous $\dot{H}^2$-norm of $(u,\theta)$. By a simple energy estimate and $\nabla  \cdot u = 0$, we obtain the global $L^2$-norm of $(u,\theta)$ obeys
\begin{equation}\label{w-5}
\begin{aligned}
\frac{1}{2} \frac{d}{d t}\|(u,\theta)\|_{L^{2}}^{2}+\|\nabla _{h}u \|_{L^{2}}^{2}+\|\nabla _{h}\theta\|_{L^{2}}^{2}=0.
\end{aligned}
\end{equation}
To obtain the homogeneous $\dot{H}^2$ norm of $(u,\theta)$,  we apply $\Delta$ operator to the equations of \eqref{E1.9} and take the $L^2-$
inner product of the resulting equations with $(\Delta u,\Delta\theta)$. Then integrating by parts and by the boundary conditions \eqref{N1.5}, \eqref{N1.6}, we obtain
\begin{equation}\label{w-6}
\begin{aligned}
\frac{1}{2}\frac{d}{d t} \|(\Delta u,\Delta\theta)\|_{L^{2}}^{2}+\|\nabla _{h} \Delta u
\|_{L^{2}}^{2}+\|\nabla _{h} \Delta\theta\|_{L^{2}}^{2}=I_1+I_2,
\end{aligned}
\end{equation}
where $I_1$ and $I_2$ are given by
\begin{align*}
&I_1=- \int_{\br^3_{+}} \Delta(u \cdot \nabla u) \cdot \Delta u  d x, \\
&I_2=- \int_{\br^{3}_{+}} \Delta(u \cdot \nabla \theta) \Delta \theta dx.
\end{align*}
Here we used the fact that for $\Delta u_3=\Delta_h u_3+ \partial_3^2 u_3=0$ on $x_3=0$.
To estimate $I_1$, we decompose it as
\begin{equation*}
\begin{aligned}
{I_1} =&- \sum\limits_{i = 1}^2 {\int_{\br^{3}_{+}} {\partial _i^2(u \cdot \nabla u) \cdot \Delta u} } dx - \sum\limits_{k= 1}^2 {\int_{\br^{3}_{+}} {\partial _3^2(u \cdot \nabla u) \cdot \partial _k^2 u} } dx-\int_{\br^{3}_{+}} {\partial _3^2(u \cdot \nabla u) \cdot \partial_3^2 u} dx\\
=&I_{11} + I_{12}+ I_{13}.
\end{aligned}
\end{equation*}
Due to the boundary conditions \eqref{N1.5}, \eqref{N1.6} and $\nabla \cdot u =0$, we have
\begin{equation*}
\begin{aligned}
I_{11} +I_{12}= & - \sum\limits_{i = 1}^2\sum_{k=1}^{2} {\int_{\br^{3}_{+}} {\partial _i^2 \nabla \cdot (u \otimes u) \cdot \partial _k^2 u} } dx + \sum\limits_{i = 1}^2 {\int_{\mathbb{R}^{3}_{+}} {\partial _i \nabla \cdot (u \otimes u) \cdot \partial_i \partial _3^2 u} } dx
- \sum\limits_{k= 1}^2 {\int_{\br^{3}_{+}} {\partial _3^2\partial _k (u \otimes u) \cdot \nabla \partial _k u} } dx \\
\leqslant& C \sum\limits_{i = 1}^2\sum_{k=1}^{2} \| \partial _i  (u \otimes u)\|_{H^2} \|\partial _k^2 u\|_{L^2}+\sum\limits_{i = 1}^2 \| \partial _i  (u \otimes u)\|_{H^1} \|\partial_i \partial _3^2 u\|_{L^2}+\sum_{k=1}^{2} \|\partial _k (u \otimes u)\|_{H^2} \|\nabla \partial _k  u\|_{L^2}\\
\leqslant& C \|u\|_{L^\infty}\| {\nabla _h} u\|_{H^2}^2
\leqslant C \|u\|_{H^2}\| {\nabla _h} u\|_{H^2}^2.
\end{aligned}
\end{equation*}
To estimate $I_3$, we decompose it as
\begin{equation*}
\begin{aligned}
I_{13}= -\sum\limits_{j = 1}^2 \int_{\br^{3}_{+}} {\partial _3^2(u_j \partial_j u) \cdot \partial_3^2 u} dx
- \int_{\br^{3}_{+}} {\partial _3^2(u_3 \partial_3 u) \cdot \partial_3^2 u} dx
=I_{131}+I_{132}
\end{aligned}
\end{equation*}
By the Leibniz Formula and
\begin{equation*}
\begin{aligned}
\int_{\br^{3}_{+}} {(u \cdot \nabla\partial _3^2 u) \cdot \partial _3^2 u}  dx
=-\int_{\br^{3}_{+}} {(\nabla \cdot u) |\partial _3^2 u|^2}  dx +\int_{x_3=0} {(u \cdot n) |\partial _3^2 u|^2}  dx=0,
\end{aligned}
\end{equation*}
we have
\begin{equation*}
\begin{aligned}
 I_{131}=&- \sum\limits_{j = 1}^2 \sum\limits_{l = 1}^2 {\mathcal{C}_2^l\int_{\br^3_+}{\partial _3^l {u_j}\partial_3^{2 - l}{\partial _j}u \cdot \partial _3^2 u} dx} \\
\leqslant & C\sum\limits_{j = 1}^2 \sum\limits_{l = 1}^2 \| \partial _3^l u_j\|_{L^2}^{\frac{1}{2}} \| {\partial _1}\partial _3^l u_j\|_{L^2}^{\frac{1}{2}} \| {\partial _3^{2 - l}{\partial _j}u} \|_{L^2}^{\frac{1}{2}} \| \partial _3 \partial _3^{2 - l}{\partial _j}u\|_{L^2}^{\frac{1}{2}} \| {\partial _3^2} u \|_{L^2}^{\frac{1}{2}} \| {\partial _2}\partial _3^2 u\|_{L^2}^{\frac{1}{2}} \\
\leqslant & C\| u \|_{H^2}\| {\nabla _h}u\|_{H^2}^2.
\end{aligned}
\end{equation*}
By $\nabla  \cdot u = 0$ and Lemma \ref{lem00},
\begin{equation*}
\begin{aligned}
I_{132} = & - \sum\limits_{l = 1}^2 {\mathcal{C}_2^l\int_{\br^{3}_{+}}  {\partial _3^l{u_3}\partial _3^{2 - l}{\partial _3}u \cdot \partial _3^2 u} dx} \\
\leqslant & \sum\limits_{l = 1}^2 {\mathcal{C}_2^l\int_{\br^{3}_{+}}  {\partial _3^{l - 1}({\nabla _h} \cdot {u_h})\partial _3 \partial _3^{2 - l}u \cdot \partial _3^2 u} dx} \\
\leqslant & C\sum\limits_{l = 1}^2 \| \partial _3^{l - 1}{\nabla _h} \cdot {u_h} \|_{L^2}^{\frac{1}{2}} \| {\partial _3}\partial _3^{l - 1}{\nabla _h} \cdot {u_h}\|_{L^2}^{\frac{1}{2}} \| \partial _3^{2 - l} \partial _3u\|_{L^2}^{\frac{1}{2}} \| {\partial _1}\partial _3^{2 - l}\partial _3 u \|_{L^2}^{\frac{1}{2}} \| \partial _3^2 u \|_{L^2}^{\frac{1}{2}} \| {\partial _2}\partial _3^2 u\|_{L^2}^{\frac{1}{2}} \\
\leqslant & C\| u \|_{H^2}\| {\nabla _h}u \|_{H^2}^2.
\end{aligned}
\end{equation*}
Therefore,
\begin{equation}\label{w-8}
I_{1}\leqslant C\| u\|_{H^{2} }\|\nabla _{h} u\|_{H^{2} }^{2}.
\end{equation}
Now we turn to estimate $I_2$. Due to $\nabla \cdot u =0$, We further decompose it as
\begin{equation*}
\begin{aligned}
{I_2} = &- \sum\limits_{i = 1}^2 {\int_{\br^{3}_{+}}  {\partial _i^2\nabla\cdot  (u \theta )\Delta\theta } } dx- \sum\limits_{k= 1}^2 {\int_{\br^{3}_{+}}  {\partial _3^2\nabla\cdot  (u \theta )\partial _k^2\theta } } dx- \int_{\br^{3}_{+}}  {\partial _3^2(u\cdot \nabla \theta )\partial _3^2\theta } dx \\
=& I_{21}+ I_{22}+I_{23}
\end{aligned}
\end{equation*}
$I_{21}$ and $I_{22}$ can be estimated similarly as $I_{11}$ and $I_{12}$.  In fact,
\begin{equation*}
\begin{aligned}
I_{21}+I_{22}
\leqslant C (\|u\|_{H^2}+\|\theta\|_{H^2})(\| {\nabla _h} u\|_{H^2}^2+\| {\nabla _h} \theta\|_{H^2}^2).
\end{aligned}
\end{equation*}
Similarly, by Young's inequality, Sobolev's inequality and Lemma \ref{lem00}, we have
\begin{equation*}
\begin{aligned}
I_{23}
=& - \sum\limits_{j = 1}^2 {\sum\limits_{l = 1}^2 {\mathcal{C}_2^l\int {\partial _3^l {u_j}\partial_3^{2 - l}{\partial _j}\theta\cdot \partial _3^2 \theta} dx} }
- \sum\limits_{l = 1}^2 {\mathcal{C}_2^l\int {\partial _3^l{u_3}\partial _3^{2 - l}{\partial _3}\theta \cdot \partial _3^2 \theta} dx} \\
\leqslant & C\sum\limits_{j = 1}^2 \sum\limits_{l = 1}^2 \| \partial _3^l u_j\|_{L^2}^{\frac{1}{2}} \| {\partial _1}\partial _3^l u_j\|_{L^2}^{\frac{1}{2}} \| {\partial _3^{2 - l}{\partial _j}\theta} \|_{L^2}^{\frac{1}{2}} \| \partial _3 \partial _3^{2 - l}{\partial _j}\theta\|_{L^2}^{\frac{1}{2}} \| {\partial _3^2} \theta \|_{L^2}^{\frac{1}{2}} \| {\partial _2}\partial _3^2 \theta\|_{L^2}^{\frac{1}{2}} \\
&+C\sum\limits_{l = 1}^2 \| \partial _3^{l - 1}{\nabla _h} \cdot {u_h} \|_{L^2}^{\frac{1}{2}} \| {\partial _3}\partial _3^{l - 1}{\nabla _h} \cdot {u_h}\|_{L^2}^{\frac{1}{2}}
\| \partial _3^{2 - l} \partial _3\theta\|_{L^2}^{\frac{1}{2}} \| {\partial _1}\partial _3^{2 - l}\partial _3 \theta \|_{L^2}^{\frac{1}{2}} \| \partial _3^2 \theta\|_{L^2}^{\frac{1}{2}} \| {\partial _2}\partial _3^2 \theta\|_{L^2}^{\frac{1}{2}} \\
\leqslant & C(\| u \|_{H^2} ^{\frac{1}{2}} \| {\nabla _h}u \|_{H^2} ^{\frac{1}{2}} \| {\nabla _h}\theta\|_{H^2} ^{\frac{3}{2}} \| \theta\|_{H^2} ^{\frac{1}{2}}
+\| {\nabla _h}u \|_{H^2}\| \theta\|_{H^2} \| {\nabla _h}\theta\|_{H^2} )\\
\leqslant & C(\| u \|_{H^2} + \| \theta  \|_{H^2})(\| {\nabla _h}u \|_{H^2}^2 + \| {\nabla _h}\theta  \|_{H^2}^2).
\end{aligned}
\end{equation*}
Collecting the bound for $I_2$, we obtain
\begin{equation}\label{w-9}
I_{2}\leqslant C(\| u\|_{H^{2} }+\|\theta\|_{H^2})(\|\nabla _{h} u\|_{H^{2} }^{2}+\| {\nabla _h}\theta\|_{H^2}^2).
\end{equation}
Inserting (\ref{w-8}) and (\ref{w-9}) in (\ref{w-6}), integrating in time over $[0,t]$ and adding \eqref{w-5}, we deduce
\begin{equation*}
\begin{aligned}
\tilde{E}(t)&\leqslant \tilde{E}(0)+C\int_0^t \|u\|_{H^2}\|\nabla_hu\|_{H^2}^2 d\tau+C\int_0^t(\|u\|_{H^2}+\|\theta\|_{H^2})(\| {{\nabla _h}u} \|_{{H^2}}^2 + \|  {\nabla _h}\theta  \|_{{H^2}}^2) d\tau\\
&\leqslant \tilde{E}(0)+C\tilde{E}^{\frac{3}{2}}(t).\\
\end{aligned}
\end{equation*}
which is the desired inequality (\ref{w-1}).

It's easy to prove the uniqueness result of Lemma \ref{lem0}. Let $(u^{(1)}, p^{(1)}, \theta^{(1)})$ and $(u^{(2)}, p^{(2)}, \theta^{(2)})$ be two solutions of equation (\ref{E1.9}) with one of them in the regularity
class, say $(u^{(1)},\theta^{(1)})\in L^{\infty}(0,\infty;H^{2}(\br^{3}_+))$ must coincide. In fact, their difference $(\tilde{u}, \tilde{p}, \tilde{\theta})$ with
$$\tilde{u}=u^{(2)}-u^{(1)}, \quad \tilde{p}=p^{(2)}-p^{(1)}, \quad \tilde{\theta}=\theta^{(2)}-\theta^{(1)}$$
satisfies
\begin{eqnarray}
\begin{cases}
\partial_{t} \tilde{u}+u^{(2)} \cdot \nabla \tilde{u}+\tilde{u} \cdot \nabla u^{(1)}=-\nabla \tilde{p}+ \Delta_{h} \tilde{u}+\tilde{\theta} e_{3}, &x\in\br^3_+,t>0\\
\partial_{t} \tilde{\theta}+u^{(2)} \cdot \nabla \tilde{\theta}+\tilde{u} \cdot \nabla \theta^{(1)}+\tilde{u}_{3}+ \Delta_{h} \tilde{\theta}=0, &x\in\br^3_+,t>0\\
\nabla \cdot \tilde{u}=0, &x\in\br^3_+,t>0\\
\tilde{u}(x, 0)=0, \tilde{\theta}(x, 0)=0,&x\in\br^3_+\\
\frac{\partial \tilde{u}_1}{\partial x_3}=\frac{\partial \tilde{u}_2}{\partial x_3}=0, \tilde{u}_3=0,\tilde{\theta}=0,&~\mbox{on}~ x_3=0,t>0
\end{cases}\label{w-10}
\end{eqnarray}
Taking the $L^{2}$-inner product of (\ref{w-10}) with $(\tilde{u},\tilde{\theta})$, by Lemma \ref{lem00}, Young's inequality and the global bounds for
$\|(u^{(1)},\theta^{(1)})\|_{H^{2}}$, we deduce
\begin{equation*}
\begin{aligned}
&\frac{1}{2} \frac{d}{d t}\|(\tilde{u},\tilde{\theta})\|_{L^{2}}^{2}+\left\|\nabla _{h} \tilde{u} \right\|_{L^{2}}^{2}+\|\nabla _{h} \tilde{\theta}\|_{L^{2}}^{2}\\
=&-\int_{\br^{3}_{+}} (\tilde{u} \cdot \nabla u^{(1)} ) \cdot \tilde{u}d x-\int_{\br^{3}_{+}} (\tilde{u} \cdot \nabla \theta^{(1)} ) \cdot \tilde{\theta} d x\\
\leqslant& C\|\tilde{u}\|_{L^{2} }^{\frac{1}{2}}\|\partial_{1}\tilde{u}\|_{L^{2} }^{\frac{1}{2}}\|\nabla u^{(1)}\|_{L^{2} }^{\frac{1}{2}}\|\partial_{3}\nabla u^{(1)}\|_{L^{2}
}^{\frac{1}{2}}\|\tilde{u}\|_{L^{2} }^{\frac{1}{2}}\|\partial_{2}\tilde{u}\|_{L^{2} }^{\frac{1}{2}}\\
&+C\|\tilde{u}\|_{L^{2} }^{\frac{1}{2}}\|\partial_{1}\tilde{u}\|_{L^{2} }^{\frac{1}{2}}\|\nabla \theta^{(1)}\|_{L^{2} }^{\frac{1}{2}}\|\partial_{3}\nabla \theta^{(1)}\|_{L^{2}
}^{\frac{1}{2}}\|\tilde{\theta}\|_{L^{2} }^{\frac{1}{2}}\|\partial_{2}\tilde{\theta}\|_{L^{2} }^{\frac{1}{2}}\\
\leqslant& C\|\tilde{u}\|_{L^{2}}\|\nabla_{h}\tilde{u}\|_{L^{2} }+C\|\tilde{u}\|_{L^{2}}^{\frac{1}{2}}\|\nabla_{h} \tilde{u}\|_{L^{2}}^{\frac{1}{2}}\|\tilde{\theta}\|_{L^{2}}^{\frac{1}{2}}\|\nabla_{h} \tilde{\theta}\|_{L^{2}}^{\frac{1}{2}}\\
\leqslant& \frac{1}{2}\|\nabla_{h} \tilde{u}\|_{L^{2} }^{2}+\frac{1}{2}\|\nabla_{h} \tilde{\theta}\|_{L^{2} }^{2}+C\|(\tilde{u},\tilde{\theta})\|_{L^{2}}^{2}.
\end{aligned}
\end{equation*}
where we have used the fact that
$$\int\tilde{\theta}e_{3} \cdot \tilde{u}dx -\int \tilde{u}_{3}\tilde{\theta}dx=0. $$
Then we apply the Gr\"{o}nwall's inequality to get the desired global uniqueness,
$$\|\tilde{u}\|_{L^{2}}^{2}=\|\tilde{\theta}\|_{L^{2}}^{2}=0.$$
Thus the proof of Lemma \ref{lem0} is completed.
\end{proof}

According to the boundary conditions \eqref{N1.5}, \eqref{N1.6} and Lemma \ref{lem0}, we observe that the following results which are critical to finish the proof of Theorem \ref{T1.1}.
\begin{lem}\label{lem01}
	Assume that initial data $u_0$ and $\theta_0$ with boundary conditions \eqref{N1.5} and \eqref{N1.6}  and $\p_3^2\theta_0\left|_{x_3=0}\right.=0,$ then if the triple $(u,p,\theta)$ is the smooth solution of the problem \eqref{E1.9} with $\int_0^t\|\nabla_h u_h\|_{L^\infty(\br_+^3)}<\infty$, we find that
	$$\p_3^3 p(x_h,0,t)=0, ~\mbox{for}~x_h\in\br^2, t>0.$$
\end{lem}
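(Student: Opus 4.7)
The plan is to express $\p_3^3 p|_{x_3=0}$ via the pressure Poisson equation and then reduce each resulting boundary term to zero. Taking the divergence of the momentum equation in \eqref{E1.9} and using $\nabla\cdot u=0$ yields
$$
\Delta p \;=\; -\sum_{i,j=1}^3 \p_j u_i\,\p_i u_j \;+\; \p_3\theta,
$$
and since $\p_3^2 = \Delta - \Delta_h$, we get
$$
\p_3^3 p \;=\; \p_3(\Delta p) - \Delta_h(\p_3 p) \;=\; -\sum_{i,j=1}^3 \p_3(\p_j u_i\,\p_i u_j) \;+\; \p_3^2\theta \;-\; \Delta_h(\p_3 p).
$$
The claim thus reduces to showing that each of the three terms on the right vanishes on $x_3=0$.

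First I record two preliminary boundary identities. Evaluating the third component of the momentum equation on $x_3=0$ makes every term except $\p_3 p$ vanish, since $u_3=0$, $\p_t u_3=0$, $\Delta_h u_3=0$, $\theta=0$, and $u\cdot\nabla u_3|_{x_3=0}=0$ (because $u_3|_{x_3=0}\equiv 0$ forces its horizontal derivatives to vanish and $u_3$ itself is zero against $\p_3 u_3$). This gives $\p_3 p|_{x_3=0}=0$, whence $\Delta_h(\p_3 p)|_{x_3=0}=0$. Next, differentiating $\p_3 u_3 = -\nabla_h\cdot u_h$ once more in $x_3$ and using $\p_3 u_h|_{x_3=0}=0$ yields $\p_3^2 u_3|_{x_3=0}=0$. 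A short case analysis on $(i,j)\in\{1,2,3\}^2$ now shows that every summand $\p_3(\p_j u_i\,\p_i u_j)|_{x_3=0}$ is zero: purely horizontal pairs ($i,j\in\{1,2\}$) are killed by $\p_3 u_h|_{x_3=0}=0$ (whose horizontal derivatives also vanish); mixed pairs combine $\p_3 u_h|_{x_3=0}=0$ with $\p_i u_3|_{x_3=0}=0$ for $i=1,2$ (a consequence of $u_3|_{x_3=0}=0$); and the $(3,3)$-summand uses $\p_3^2 u_3|_{x_3=0}=0$.

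The key and most delicate step is propagating $\p_3^2\theta|_{x_3=0}=0$ from $t=0$ to all later times; this is the only place where the compatibility hypothesis $\p_3^2\theta_0|_{x_3=0}=0$ and the time integrability of $\|\nabla_h u_h\|_{L^\infty}$ enter. Applying $\p_3^2$ to the temperature equation and restricting to $x_3=0$, then using $\theta|_{x_3=0}=0$ (so that $\p_1\theta|_{x_3=0}=\p_2\theta|_{x_3=0}=0$) together with the boundary facts above, most contributions to $\p_3^2(u\cdot\nabla\theta)|_{x_3=0}$ drop out; the only surviving vertical interaction comes from $2\p_3 u_3\,\p_3^2\theta$, which by $\p_3 u_3|_{x_3=0}=-\nabla_h\cdot u_h$ yields a zeroth-order coefficient. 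Setting $\phi(x_h,t):=\p_3^2\theta(x_h,0,t)$, the resulting relation is the 2D transport-diffusion equation
$$
\p_t\phi \;+\; u_h|_{x_3=0}\cdot\nabla_h\phi \;-\; 2\bigl(\nabla_h\cdot u_h|_{x_3=0}\bigr)\phi \;=\; \Delta_h\phi,\qquad \phi(\cdot,0)=0,
$$
and the time integrability of $\|\nabla_h u_h\|_{L^\infty}$ places the coefficients in the range of a standard $L^2$ energy-uniqueness argument, giving $\phi\equiv 0$. Inserting the three pieces into the expansion of $\p_3^3 p$ then completes the proof. The genuine obstacle is this propagation step: the trace $\p_3^2\theta|_{x_3=0}$ is not directly controlled by the $H^3$ energy of Theorem \ref{T1.1}, so one must discover the closed 2D equation it satisfies rather than attempt a pointwise estimate; the case analysis for the nonlinear term in $\Delta p$ is routine once one consistently uses $\nabla\cdot u=0$ to trade vertical derivatives for horizontal ones on the boundary.
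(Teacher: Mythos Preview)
Your argument is correct and follows essentially the same strategy as the paper's proof: both reduce the claim to the identity $\p_3^3 p|_{x_3=0}=\p_3^2\theta|_{x_3=0}$ and then propagate $\p_3^2\theta|_{x_3=0}=0$ in time via the closed 2D transport--diffusion equation on the boundary, finishing by Gr\"onwall using the hypothesis $\int_0^t\|\nabla_h u_h\|_{L^\infty}\,d\tau<\infty$. The only organizational difference is that you reach the key identity through the pressure Poisson equation $\Delta p=-\sum_{i,j}\p_j u_i\,\p_i u_j+\p_3\theta$ together with $\p_3 p|_{x_3=0}=0$, whereas the paper applies $\p_3^2$ directly to the third component of the momentum equation and verifies $\p_3^2(u\cdot\nabla u_3)|_{x_3=0}=0$; these two computations are equivalent rearrangements of the same information and neither offers a real advantage over the other.
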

\begin{proof}
	Since \eqref{N1.5}, we know that $\p_3^2u_3\left|_{x_3=0}\right.=\p_3(\nabla_h\cdot u_h)\left|_{x_3=0}\right.=\nabla_h\cdot\p_3 u_h\left|_{x_3=0}\right.=0.$
	
	Applying the operator $\p_3^2$ to the third  equation of \eqref{E1.9} , we have got
	\begin{equation*}
	\p_t(\p_3^2u_3)-\Delta_h\p_3^2 u_3+\p_3^2(u\cdot\nabla u_3)+\p_3^3p=\p_3^2\theta.
	\end{equation*}
	Since
	\begin{equation*}
	\begin{aligned}
	\p_3^2(u\cdot\nabla u_3)&=\p_3^2u\cdot\nabla u_3+2\p_3u\cdot\nabla \p_3u_3+u\cdot\nabla\p_3^2u_3\\
	&=\p_3^2u_h\cdot\nabla_h u_3+\p_3^2 u_3\p_3 u_3\\
	&+\p_3 u_h\cdot\nabla_h\p_3u_3+\p_3 u_3\p^2_3 u_3\\
	&+u_h\cdot\nabla_h\p_3^2 u_3+u_3\p_3^3u_3.
	\end{aligned}
	\end{equation*}
	By \eqref{bdry1.2} and $\p_3^2u_3=0$, on $x_3=0$,  we have $\p_3^2(u\cdot\nabla u_3)=0$, on $x_3=0$. Thus $\p_3^3p=\p_3^2\theta$,  on $x_3=0$.
	Applying the operator $\p_3^2$ to the fourth  equation of \eqref{E1.9} , we obtain
	\begin{equation}\label{E3.2}
	\p_t(\p_3^2\theta)-\Delta_h\p_3^2 \theta+\p_3^2(u\cdot\nabla \theta)+\p_3^2u_3=0.
	\end{equation}
	By the boundary conditions \eqref{N1.5}, \eqref{N1.6} and $\p_3^2u_3=0$  on $x_3=0,$  one refers that
	\begin{equation}\label{E3.2}
	\frac{\dd}{\dd t}\|\p_3^2\theta\|_{L^2(\br^2)}^2+\|\nabla_h\p_3^2 \theta\|_{L^2(\br^2)}^2\leqslant\|\nabla_h u_h(\cdot,0)\|_{L^\infty(\br^2)}\|\p_3^2\theta\|_{L^2(\br^2)}^2\leqslant\|\nabla_h u_h \|_{L^\infty(\br_+^3)}\|\p_3^2\theta\|_{L^2(\br^2)}^2.
	\end{equation}
	By Gronwall's inequality, we obtain that
	$$\|\p_3^2\theta(\cdot,0)\|_{L^2(\br^2)}^2\leqslant \|\p_3^2\theta_0(\cdot,0)\|_{L^2(\br^2)}^2 e^{\int_0^t\|\nabla_h u_h \|_{L^\infty }\dd \tau}.$$
	
	When $\|\p_3^2\theta_0(\cdot,0)\|_{L^2(\br^2)}=0,$ then  $\|\p_3^2\theta(\cdot,0, t)\|_{L^2(\br^2)}^2=0.$ Therefore, $\p_3^3p\left|_{x_3=0}\right.=0, $ for all $t>0.$
\end{proof}

\textbf{Proof of Theorem \ref{T1.1}}
We prove the existence of the solution by using the bootstrapping argument. Define the energy functional $E(t)$ by
$$E(t) = \mathop {\sup }\limits_{0 \leqslant \tau  \leqslant t} \{ {\| {u(\tau )} \|_{{H^3}}^2 + \| {\theta (\tau )} \|_{{H^3}}^2} \} + 2\int_0^t {\| {{\nabla _h}u(\tau )} \|_{{H^3}}^2d\tau }  + 2 \int_0^t {\| {\nabla _h}\theta (\tau )\|_{{H^3}}^2d\tau } .$$
Our main efforts are devoted to showing that, for a constant $C_0>0$ and for $t>0$,
\begin{equation}\label{w-0}
E(t) \leqslant E(0) + {C_0}{E^{{\textstyle{3 \over 2}}}}(t).
\end{equation}
Due to the Lemma \ref{lem00}, we focus on the $\dot{H}^3$ norm of $(u, \theta)$. Applying $\partial _i^3(i = 1,2,3)$  to (\ref{E1.9}), dotting by $(\partial _i^3 u,\partial _i^3\theta )$ and  integrating by parts, we have
\begin{equation}\label{w-11}
\begin{aligned}
&\frac{1}{2}\frac{d}{d t} \sum_{i=1}^{3}\|(\partial_{i} ^{3}u,\partial_{i} ^{3}\theta)\|_{L^{2}}^{2}+\sum_{i=1}^{3}\|\nabla _{h} \partial_{i} ^{3} u
\|_{L^{2}}^{2}+ \sum_{i=1}^{3}\|\nabla _{h} \partial_{i} ^{3}\theta\|_{L^{2}}^{2}+\int_{x_3=0}\p_i^3 p\p^3_i u_3\dd\sigma=K_1+K_2,
\end{aligned}
\end{equation}
where $K_1$ and $K_2$ are given by
\begin{align*}
&K_1=- \sum_{i=1}^{3}\int_{\br^{3}_{+}} \partial_{i} ^{3}(u \cdot \nabla u) \cdot \partial_{i} ^{3} u  d x, \\
&K_2=- \sum_{i=1}^{3}\int_{\br^{3}_{+}} \partial_{i} ^{3}(u \cdot \nabla \theta) \partial_{i} ^{3} \theta dx.
\end{align*}
Now we must dealt with the boundary integration $S=\int_{x_3=0}\p_i^3 p\p^3_i u_3\dd\sigma$. Clearly, when $i=1,2$, then $S=0$ by \eqref{N1.5}.  By the estimates \eqref{leI2.3},
 $$\int_0^t\|\nabla_h u_h \|_{L^\infty}\dd\tau\leqslant\left (\int_0^t\|\nabla_h u_h \|^2_{L^\infty}\dd\tau\right)^\frac{1}{2}t^\frac{1}{2}\leqslant C\left(\int_0^t\|\nabla_h u_h \|^2_{H^2}\dd\tau\right)^\frac{1}{2}t^\frac{1}{2}<\infty $$
 for any fixed $t>0.$ Thus for $i=3$, from Lemma \ref{lem01}, we know that
\begin{equation*}
S=\int_{x_3=0}\p_3^3 p\p^3_3 u_3\dd\sigma=0.
\end{equation*}

To estimate $K_1$, we decompose it as
\begin{equation*}
\begin{aligned}
K_1 =&- \sum\limits_{i = 1}^2 {\int_{\br^{3}_{+}} {\partial _i^3(u \cdot \nabla u) \cdot \partial _i^3 u} } dx - \sum\limits_{j = 1}^2 {\int_{\br^{3}_{+}} {\partial _3^3 ({u_j}{\partial _j}u) \cdot \partial _3^ 3 u} } dx - \int_{\br^{3}_{+}} {\partial _3^3({u_3} {\partial _3}u) \cdot \partial _3^3 u} dx\\
=&K_{11}+ K_{12} + K_{13}.
\end{aligned}
\end{equation*}
$K_{11}$ is easy to bound. Due to $\nabla \cdot u =0$ and Lemma \ref{lem000},
\begin{equation*}
\begin{aligned}
K_{11} =  - \sum\limits_{i = 1}^2 {\int_{\br^{3}_{+}} {\partial _i^3 \nabla \cdot (u \otimes u) \cdot \partial _i^3 u} } dx
\leqslant C \|u\|_{L^\infty}\| {\nabla _h} u\|_{H^3}^2
\leqslant C \|u\|_{H^3}\| {\nabla _h} u\|_{H^3}^2.
\end{aligned}
\end{equation*}
By the Leibniz formula and
\begin{equation*}
\begin{aligned}
\int_{\br^{3}_{+}} {(u \cdot \nabla\partial _i^3 u) \cdot \partial _i^3 u}  dx
=-\int_{\br^{3}_{+}} {(\nabla \cdot u) |\partial _i^3 u|^2}  dx +\int_{x_3=0} {(u \cdot n) |\partial _i^3 u|^2}  dx=0, \ \ \ \ i=1,2,3,
\end{aligned}
\end{equation*}
we have
\begin{equation*}
\begin{aligned}
K_{12} = & - \sum\limits_{j = 1}^2 {\sum\limits_{l = 1}^3 {\mathcal{C}_3^l\int {\partial _3^l {u_j}\partial_3^{3 - l}{\partial _j}u \cdot \partial _3^3 u} dx} } \\
\leqslant & C\sum\limits_{j = 1}^2 \sum\limits_{l = 1}^3 \| \partial _3^l u_j\|_{L^2}^{\frac{1}{2}} \| {\partial _1}\partial _3^l u_j\|_{L^2}^{\frac{1}{2}} \| {\partial _3^{3 - l}{\partial _j}u} \|_{L^2}^{\frac{1}{2}} \| \partial _3 \partial _3^{3 - l}{\partial _j}u\|_{L^2}^{\frac{1}{2}} \| {\partial _3^2} u \|_{L^2}^{\frac{1}{2}} \| {\partial _2}\partial _3^2 u\|_{L^2}^{\frac{1}{2}} \\
\leqslant & C\| u \|_{H^3}\| {\nabla _h}u\|_{H^3}^2.
\end{aligned}
\end{equation*}
By $\nabla  \cdot u = 0$ and Lemma \ref{lem0},
\begin{equation*}
\begin{aligned}
K_{13}= & - \sum\limits_{l = 1}^3 {\mathcal{C}_3^l\int {\partial _3^l{u_3}\partial _3^{3 - l}{\partial _3}u \cdot \partial _3^3 u} dx} \\
\leqslant & \sum\limits_{l = 1}^3 {\mathcal{C}_3^l\int {\partial _3^{l - 1}({\nabla _h} \cdot {u_h})\partial _3^{3 - l} \partial _3 u \cdot \partial _3^3 u} dx} \\
\leqslant & C\sum\limits_{l = 1}^3 \| \partial _3^{l - 1}{\nabla _h} \cdot {u_h} \|_{L^2}^{\frac{1}{2}} \| {\partial _3}\partial _3^{l - 1}{\nabla _h} \cdot {u_h}\|_{L^2}^{\frac{1}{2}} \| \partial _3^{3 - l} \partial _3u\|_{L^2}^{\frac{1}{2}} \| {\partial _1}\partial _3^{3 - l}\partial _3 u \|_{L^2}^{\frac{1}{2}} \| \partial _3^2 u \|_{L^2}^{\frac{1}{2}} \| {\partial _2}\partial _3^2 u\|_{L^2}^{\frac{1}{2}} \\
\leqslant & C\| u \|_{H^3}\| {\nabla _h}u \|_{H^3}^2.
\end{aligned}
\end{equation*}
Therefore,
\begin{equation}\label{w-12}
K_{1}\leqslant C\| u\|_{H^{3} }\|\nabla _{h} u\|_{H^{3} }^{2}.
\end{equation}
Now we turn to estimate $K_2$. Due to $\nabla \cdot u =0$, We further decompose it as
\begin{equation*}
\begin{aligned}
K_2= &- \sum\limits_{i = 1}^2 {\int {\partial _i^3\nabla\cdot  (u \theta )\partial _i^3\theta } } dx - \int {\partial _3^3(u\cdot \nabla \theta )\partial _3^3\theta } dx \\
=& K_{21}+ K_{22}.
\end{aligned}
\end{equation*}
To deal with $K_{21}$, we apply Sobolev's inequality
\begin{equation*}
\begin{aligned}
K_{21}
\leqslant& C \sum\limits_{i = 1}^2 (\| \partial _i^3 \nabla u\|_{L^2}\|\theta\|_{L^\infty} +\| \partial _i^3 \nabla \theta\|_{L^2}\|u\|_{L^\infty})
\|\partial _i^3 \theta\|_{L^2}\\
\leqslant& C (\|u\|_{H^3}+\|\theta\|_{H^3})(\| {\nabla _h} u\|_{H^3}^2+\| {\nabla _h} \theta\|_{H^3}^2).
\end{aligned}
\end{equation*}
By Young's inequality, Sobolev's inequality and Lemma \ref{lem00}, we have
\begin{equation*}
\begin{aligned}
K_{22}
=& - \sum\limits_{j = 1}^2 {\sum\limits_{l = 1}^3 {\mathcal{C}_3^l\int {\partial _3^l {u_j}\partial_3^{3 - l}{\partial _j}\theta\cdot \partial _3^3 \theta} dx} }
- \sum\limits_{l = 1}^3 {\mathcal{C}_3^l\int {\partial _3^l{u_3}\partial _3^{3- l}{\partial _3}\theta \cdot \partial _3^3 \theta} dx} \\
\leqslant & C\sum\limits_{j = 1}^2 \sum\limits_{l = 1}^3 \| \partial _3^l u_j\|_{L^2}^{\frac{1}{2}} \| {\partial _1}\partial _3^l u_j\|_{L^2}^{\frac{1}{2}} \| {\partial _3^{3 - l}{\partial _j}\theta} \|_{L^2}^{\frac{1}{2}} \| \partial _3 \partial _3^{3 - l}{\partial _j}\theta\|_{L^2}^{\frac{1}{2}} \| {\partial _3^3} \theta \|_{L^2}^{\frac{1}{2}} \| {\partial _2}\partial _3^3 \theta\|_{L^2}^{\frac{1}{2}} \\
&+C\sum\limits_{l = 1}^3 \| \partial _3^{l - 1}{\nabla _h} \cdot {u_h} \|_{L^2}^{\frac{1}{2}} \| {\partial _3}\partial _3^{l - 1}{\nabla _h} \cdot {u_h}\|_{L^2}^{\frac{1}{2}}
 \| \partial _3^{3 - l} \partial _3\theta\|_{L^2}^{\frac{1}{2}} \| {\partial _1}\partial _3^{3- l}\partial _3 \theta \|_{L^2}^{\frac{1}{2}} \| \partial _3^3 \theta\|_{L^2}^{\frac{1}{2}} \| {\partial _2}\partial _3^3 \theta\|_{L^2}^{\frac{1}{2}} \\
\leqslant & C(\| u \|_{H^3} ^{\frac{1}{2}} \| {\nabla _h}u \|_{H^3} ^{\frac{1}{2}} \| {\nabla _h}\theta\|_{H^3} ^{\frac{3}{2}} \| \theta\|_{H^3} ^{\frac{1}{2}}
+\| {\nabla _h}u \|_{H^3}\| \theta\|_{H^3} \| {\nabla _h}\theta\|_{H^3} )\\
\leqslant & C(\| u \|_{H^3} + \| \theta  \|_{H^3})(\| {\nabla _h}u \|_{H^3}^2 + \| {\nabla _h}\theta  \|_{H^3}^2).
\end{aligned}
\end{equation*}
Collecting the above bound for $I_2$, we obtain
\begin{equation}\label{w-13}
K_{2}\leqslant C(\| u\|_{H^{3} }+\|\theta\|_{H^3})(\|\nabla _{h} u\|_{H^{3} }^{2}+\| {\nabla _h}\theta\|_{H^3}^2).
\end{equation}
Inserting (\ref{w-12}) and (\ref{w-13}) in (\ref{w-11}), integrating in time over $[0,t]$ and adding (\ref{w-5}), we deduce
\begin{equation*}
\begin{aligned}
E(t)&\leqslant E(0)+C\int_0^t \|u\|_{H^3}\|\nabla_hu\|_{H^3}^2 d\tau+C\int_0^t(\|u\|_{H^3}+\|\theta\|_{H^3})(\| {{\nabla _h}u} \|_{{H^3}}^2 + \|  {\nabla _h}\theta  \|_{{H^3}}^2) d\tau\\
&\leqslant E(0)+C_0E^{\frac{3}{2}}(t).\\
\end{aligned}
\end{equation*}
According to the bootstrapping argument, we assume that the initial data $\|(u_{0},\theta_{0})\|_{H^{3}}\leqslant\varepsilon:=\frac{1}{4C_{0}^{2}},$ namely
$$ E(0)\leqslant\frac{1}{16C_{0}^{2}}.$$
In fact, if we make the ansatz that,
$$ E(t)\leqslant\frac{1}{4C_{0}^{2}}.$$
Then (\ref{w-0}) implies
\begin{equation}
\begin{aligned}
E(t) & \leqslant {C_0}{E^{{\textstyle{1 \over 2}}}}(t) \cdot E(t) + E(0)\\
& \leqslant E(0) + \frac{1}{2}E(t).
\end{aligned}
\nonumber
\end{equation}
Consequently,
\begin{equation*}
\begin{aligned}
E(t) \leqslant\frac{1}{8C_{0}^{2}}.
\end{aligned}
\end{equation*}
The bootstrapping argument then implies the desire global bound on the solution $(u,\theta)$. As a consequence, we obtain the global existence of solutions. The uniqueness is obvious due to the high regularity of the solution. This completes the proof of Theorem \ref{T1.1}.

\section{Solution formula for the linearized problem}\label{S2}

In this section we present a solution formula for the linearized problem in terms of Fourier transform. By the change of the variables in the
system \eqref{E1.9}, linearized initial-boundary value problem around $(u^{(0)}, P^{(0)}, \Theta^{(0)})$ is written as
 \begin{eqnarray}
\begin{cases}
\p_t\bar{u}- \Delta_h \bar{u}-\bar{\theta} e_3+\nabla \varphi=0, &x\in\br^3_+,t>0,\\
\p_t\bar{\theta}- \Delta_h\bar{\theta}+\bar{u}_3=0,&x\in\br^3_+,t>0,\\
\nabla\cdot \bar{u}=0,&x\in\br^3_+,t>0,\\
(\bar{u}(x,0), \bar{\theta}(x,0))=(u_0(x),\theta_0(x))\equiv v_0, &x\in\br_+^3,\\
\frac{\partial \bar{u}_1}{\partial x_3}=\frac{\partial \bar{u}_2}{\partial x_3}=0, \bar{u}_3=0,\bar{\theta}=0,&~\mbox{on}~ x_3=0,t>0.
\end{cases}\label{E2.1}
\end{eqnarray}

Denote the solution $ \left(
                              \begin{array}{c}
                                \bar{u}(x,t) \\
                                \bar{\theta}(x,t)\\
                              \end{array}
                            \right)
$ of \eqref{E2.1}  by
$$V(t)v_0=\left(
         \begin{array}{c}
          U(t)u_0\\
          \Psi(t)\theta_0 \\
         \end{array}
       \right)=\left(
                 \begin{array}{c}
                   U_1(x,t) \\
                  U_2(x,t) \\
                   U_3(x,t) \\
                   \Psi(x,t) \\
                 \end{array}
               \right).
$$
The required estimates for $V(t)v_0$ will be obtained by its Fourier transform in $x_h=(x_1,x_2)$ and Fourier cosine and sine transforms in $x_3;$ to end it we introduce some notations. We  denote by $\hat{u}(\xi_h,x_3)(\xi_h=(\xi_1,\xi_2))$ the Fourier transform of $u(x_h,x_3)$ in $x_h=(x_1,x_2)\in\br^2$, and by $\mathcal{F}_c\hat{u}(\xi_h,\xi_3)(=\hat{u}_c(\xi_h,\xi_3))$ and $\mathcal{F}_s\hat{u}(\xi_h,\xi_3)(=\hat{u}_s(\xi_h,\xi_3))$ the Fourier cosine and sine transforms  in $x_3$, respectively:
\begin{eqnarray*}
&&\hat{u}(\xi_h,x_3)=\int_{\br^2}u(x_h,x_3)e^{-\ri \xi_h\cdot x_h}\dd x_h \\
&&\mathcal{F}_c\hat{u}(\xi_h,\xi_3)=\hat{u}_c(\xi_h,\xi_3)=\int_0^\infty\int_{\br^2}u(x_h,x_3)e^{-\ri \xi_h\cdot x_h}\cos\xi_3x_3\dd x_h\dd x_3 \\
&&\mathcal{F}_s\hat{u}(\xi_h,\xi_3)=\hat{u}_s(\xi_h,\xi_3)=\int_0^\infty\int_{\br^2}u(x_h,x_3)e^{-\ri \xi_h\cdot x_h}\sin\xi_3x_3\dd x_h\dd x_3.
\end{eqnarray*}

For the sake of clarity, we easily check the following properties of Fourier cosine and sine transforms (see \cite{Kob}).

\begin{lem}\label{lem1}
Let $\hat{u}_c,\hat{u}_s$ be the Fourier cosine and sine transforms of the functions $u$, then one has got\\

(1)  $\mathcal{F}_k(\widehat{\p_{x_h} u}(\xi_h,\xi_3))=\ri\xi_h \hat{u}_k, \mathcal{F}_k(\widehat{\Lambda^\lambda u}(\xi_h,\xi_3))=|\xi_h |^\lambda \hat{u}_k,(k=c,s,\lambda\in\br)$;\\

(2) $\mathcal{F}_c(\p_3 u(x_3))=\xi_3\hat{u}_s(\xi_3)-u(0), \mathcal{F}_s(\p_3 u(x_3))=-\xi_3\hat{u}_c(\xi_3)$;\\

(3) For any $u\in L^2(\br_+^3)$, $$\|u\|_{L^2}^2=(2\pi^3)^{-1}\|\hat{u}_c\|^2_{L^2}=(2\pi^3)^{-1}\|\hat{u}_s\|^2_{L^2};$$

(4) For any $u\in H^1(\br_+^3)$, $$\|\p_{x_h}u\|_{L^2}^2=(2\pi^3)^{-1}\|\widehat{\p_{x_h}u}_k\|^2_{L^2}=(2\pi^3)^{-1}\|\xi_h\hat{u}_k\|^2_{L^2},(k=c,s)$$ and
$$\|\p_{x_3}u\|_{L^2}^2=(2\pi^3)^{-1}\|\widehat{\p_{x_3}u}_s\|^2_{L^2}=(2\pi^3)^{-1}\|\xi_3\hat{u}_c\|^2_{L^2},$$
however, for any $u\in H_0^1(\br_+^3)$
$$\|\p_{x_3}u\|_{L^2}^2=(2\pi^3)^{-1}\|\widehat{\p_{x_3}u}_c\|^2_{L^2}=(2\pi^3)^{-1}\|\xi_3\hat{u}_s\|^2_{L^2}.$$
\end{lem}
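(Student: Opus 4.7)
The plan is to verify each of the four items by combining the standard properties of the horizontal Fourier transform with the classical integration-by-parts identities for the cosine and sine transforms on the half-line, then conclude the $L^2$ identities by Parseval/Plancherel applied sequentially.

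First I would prove (1). Since the operators $\partial_{x_h}$ and $\Lambda^{\lambda}$ act only in the horizontal variable $x_h$, they commute with the cosine/sine transforms in $x_3$. Therefore it suffices to apply the usual Fourier transform identities in $\br^2$ first, namely $\widehat{\partial_{x_h}u}=\ri\xi_h\hat u$ and $\widehat{\Lambda^\lambda u}=|\xi_h|^\lambda\hat u$, and then take $\mathcal F_c$ or $\mathcal F_s$ in $x_3$; this immediately gives the stated formulas.

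Next I would prove (2) by integrating by parts in $x_3\in[0,\infty)$. For a sufficiently decaying $u$ in $x_3$, one has
\begin{align*}
\mathcal F_c(\partial_3 u)(\xi_3)&=\int_0^\infty \partial_3 u(x_3)\cos(\xi_3 x_3)\,\dd x_3=-u(0)+\xi_3\int_0^\infty u(x_3)\sin(\xi_3 x_3)\,\dd x_3\\
&=\xi_3\hat u_s(\xi_3)-u(0),\\
\mathcal F_s(\partial_3 u)(\xi_3)&=\int_0^\infty\partial_3u(x_3)\sin(\xi_3 x_3)\,\dd x_3=-\xi_3\int_0^\infty u(x_3)\cos(\xi_3 x_3)\,\dd x_3=-\xi_3\hat u_c(\xi_3),
\end{align*}
where the sine expansion has no boundary contribution since $\sin 0 = 0$. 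This is the only place where the boundary value of $u$ enters, and it is precisely what distinguishes the two cases in (4).

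For (3), I would apply Plancherel in $x_h$ and the one-dimensional Parseval identities for the cosine and sine transforms on $[0,\infty)$; performed successively in the two groups of variables, they yield the stated norm equivalence with the explicit constant $(2\pi^3)^{-1}$. Finally, (4) follows by combining (1), (2), and (3): for $\partial_{x_h}u$ the identity is a direct consequence of (1) and (3). For $\partial_{x_3}u$ in the case $u\in H^1(\br_+^3)$, the trace $u(\cdot,0)$ is generally nonzero, and the Parseval identity must be paired with $\mathcal F_s(\partial_3 u)=-\xi_3 \hat u_c$ so that no boundary term appears. In the case $u\in H_0^1(\br_+^3)$, the trace vanishes and $\mathcal F_c(\partial_3 u)=\xi_3\hat u_s$, giving the second set of identities.

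The only delicate point, and the one I expect to require the most care in a rigorous write-up, is the boundary term in (2) and its propagation to (4): one must track which transform (cosine vs.\ sine) is compatible with which boundary condition (Neumann vs.\ Dirichlet) so that the boundary contribution is eliminated and Parseval in $x_3$ applies cleanly. The rest of the lemma is a sequence of routine but standard computations, so beyond this observation the proof is essentially a matter of assembling the pieces.
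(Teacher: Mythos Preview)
Your proposal is correct and follows the standard route: commute horizontal Fourier with the $x_3$ cosine/sine transforms for (1), integrate by parts on the half-line for (2), apply Plancherel in $x_h$ together with the half-line Parseval identities for (3), and combine these for (4), keeping track of which transform kills the boundary term. The paper itself does not give a proof of this lemma; it simply states the result and refers to \cite{Kob}, so there is nothing further to compare.
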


We define $\mathcal{F}U$ for $U=U(x_h,x_3)=(u_1(x_h,x_3),u_2(x_h,x_3),u_3(x_h,x_3),\theta(x_h,x_3))$ by $$\mathcal{F}U(\xi_h,\xi_3)=\left(
                                       \begin{array}{c}
                                         \mathcal{F}_c \hat{u}_1(\xi_h,\xi_3) \\
                                         \mathcal{F}_c \hat{u}_2(\xi_h,\xi_3) \\
                                         \mathcal{F}_s \hat{u}_3(\xi_h,\xi_3) \\
                                         \mathcal{F}_s \hat{\theta}(\xi_h,\xi_3) \\
                                       \end{array}
                                     \right)
$$
Applying the transform $\mathcal{F}$ to the problem \eqref{E2.1} and Lemma \ref{lem1}, one obtain the following system
\begin{eqnarray}
&\p_t\mathcal{F}_c\hat{u}_h+|\xi_h|^2 \mathcal{F}_c\hat{u}_h+ \widehat{\nabla_h \varphi}_c=0, &\xi\in\br^3_+,t>0,\label{E2.2}\\
&\p_t\mathcal{F}_s\hat{u}_3+|\xi_h|^2 \mathcal{F}_s\hat{u}_3-\hat{\theta}_s+ \widehat{\p_3 \varphi}_s=0, &\xi\in\br^3_+,t>0,\label{E2.3}\\
&\p_t\hat{\theta}_s+|\xi_h|^2\hat{\theta}_s+\mathcal{F}_s\hat{u}_3=0, &\xi\in\br^3_+,t>0. \label{E2.4}\\
&\ri\xi_h\cdot \mathcal{F}_c\hat{u}_h  +\xi_3\mathcal{F}_s\hat{u}_3=0,&\xi\in\br^3_+,t>0,\label{E2.5}
\end{eqnarray}
To  present the formula of the pressure in above system, we observe that $\varphi$ satisfies the following problem
\begin{eqnarray}
\begin{cases}
   \Delta \varphi =\p_{x_3}{\theta},~ &\mbox{in}~\br_+^3 \\
   \p_3 \varphi=0,~&\mbox{on}~x_3=0.
\end{cases}\label{E2.6}
\end{eqnarray}
 Used the fourier cosine transform to both sides of the equation of \eqref{E2.6}, by the Lemma \ref{lem1} and the boundary conditions of \eqref{E2.6},  one refers
  \begin{equation*}
  -|\xi|^2\hat{\varphi}_c(\xi_h,\xi_3)=\xi_3\hat{\theta}_s,
  \end{equation*}
  then we can solve above equation as
  \begin{equation}\label{E2.7}
  \hat{\varphi}_c=-\frac{\xi_3}{|\xi|^2}\hat{\theta}_s.
  \end{equation}

  We substitute \eqref{E2.7} into \eqref{E2.2}-\eqref{E2.4} and obtain
  \begin{equation}\label{E2.8}
  \p_t\mathcal{F} U(t)=A\mathcal{F} U(t),
  \end{equation}
  where
  \begin{equation}\label{A2.8}
  A=\left(
              \begin{array}{cccc}
                -|\xi_h|^2 & 0 & 0 & \frac{\rm i\xi_1\xi_3}{|\xi|^2} \\
                0 &-|\xi_h|^2 & 0 & \frac{\rm i\xi_2\xi_3}{|\xi|^2} \\
                0 & 0 &-|\xi_h|^2 &  \frac{|\xi_h|^2}{|\xi|^2}\\
                0 & 0 & -1 & -|\xi_h|^2 \\
              \end{array}
            \right)
  \end{equation}
  By solving the ODEs \eqref{E2.8} with initial data $\mathcal{F}U_0$ as follows:
  \begin{eqnarray}
&&\mathcal{F}_c\hat{u}_h=e^{\lambda_1t}\mathcal{F}_c\hat{u}_{0h}+\frac{\ri \xi_h\xi_3}{|\xi_h|^2}\left(1+\cos\frac{|\xi_h|}{|\xi|}t\right)e^{\lambda_1t}\mathcal{F}_s\hat{u}_{03}+\frac{\ri \xi_h\xi_3}{|\xi||\xi_h|}\left(\sin\frac{|\xi_h|}{|\xi|}t\right) e^{\lambda_1t} \mathcal{F}_s\hat{\theta}_{0},\label{E2.9}\\
&&\mathcal{F}_s\hat{u}_3=\left(\cos\frac{|\xi_h|}{|\xi|}t\right) e^{\lambda_1t}\mathcal{F}_s\hat{u}_{03}
+\left(\frac{|\xi_h|}{|\xi|}\sin\frac{|\xi_h|}{|\xi|}t\right)e^{\lambda_1t}\mathcal{F}_s\hat{\theta}_{0},\label{E2.10}\\
&&\mathcal{F}_s \hat{\theta}=-\left(\frac{|\xi|}{|\xi_h|}\sin\frac{|\xi_h|}{|\xi|}t\right)e^{\lambda_1t}\mathcal{F}_s\hat{u}_{03}
+\left(\cos\frac{|\xi_h|}{|\xi|}t\right) e^{\lambda_1t}\mathcal{F}_s\hat{\theta}_{0}. \label{E2.11}
\end{eqnarray}

By the formula \eqref{E2.9}-\eqref{E2.11}, we shall verify the stability and optimal decay rates stated in the following proposition (see \cite{Kob}) for the linearized equations of \eqref{E2.1} . The results in this proposition and their proofs are part of our program for optimal decay rates on the nonlinear system (1.6), and will be used in the proof of our main result, Theorem \ref{T1.2} below.

\begin{prop}\label{prop2.1}
Let $s$ be non-negative and $\sigma>0.$ Assume that the initial data $v_0=(u_0,\theta_0)$ with the boundary conditions \eqref{N1.5} and \eqref{N1.6}  satisfies $\nabla\cdot u_0=0.$

\vspace{0.2cm}
(1) If $v_0\in \dot{H}^s,\Lambda^{-\sigma}v_0=(\Lambda^{-\sigma}u_0,\Lambda^{-\sigma}\theta_0)\in \dot{H}^s,$  then the solution $V(t)v_0$ given by \eqref{E2.9}-\eqref{E2.11} satisfies
\begin{equation}\label{R2.11}
\|V(t)v_0\|_{\dot{H}^s}\leqslant C(\|v_0\|_{\dot{H}^s}+\|\Lambda^{-\sigma}v_0\|_{\dot{H}^s})(1+t)^{-\frac{\sigma}{2}}.
\end{equation}

\vspace{0.2cm}
(2)  If $\p_3v_0\in \dot{H}^s$ and $\Lambda^{-\sigma}\p_3v_0 \in \dot{H}^s,$  then
\begin{equation}\label{R2.12}
\|\p_3V(t)v_0\|_{\dot{H}^s}\leqslant C(\|\p_3v_0\|_{\dot{H}^s}+\|\Lambda^{-\sigma}\p_3v_0\|_{\dot{H}^s})(1+t)^{-\frac{\sigma}{2}}.
\end{equation}

\vspace{0.2cm}
(3)  If   $\Lambda^{-\sigma}v_0 \in \dot{H}^s,$  then
\begin{equation}\label{R2.13}
\|\nabla_hV(t)v_0\|_{\dot{H}^s}\leqslant C\|\Lambda^{-\sigma}v_0\|_{\dot{H}^s}t^{-\frac{\sigma+1}{2}}.
\end{equation}
In addition, assume that $\Lambda^{-\sigma}\nabla_hv_0 \in \dot{H}^s,$
\begin{equation}\label{R2.14}
\|\nabla_hV(t)v_0\|_{\dot{H}^s}\leqslant C(\|\Lambda^{-\sigma}\nabla_hv_0\|_{\dot{H}^s}+\|\Lambda^{-\sigma}v_0\|_{\dot{H}^s})(1+t)^{-\frac{\sigma+1}{2}}.
\end{equation}
\end{prop}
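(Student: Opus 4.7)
The plan is to transfer everything to the Fourier side and read off the decay directly from the explicit representation \eqref{E2.9}--\eqref{E2.11}. By the Parseval-type identities of Lemma \ref{lem1}, bounding $\|V(t)v_0\|_{\dot H^s}$ and its $\partial_3$ and $\nabla_h$ variants reduces to controlling weighted $L^2$-norms of $\mathcal{F}_c\hat u_h$, $\mathcal{F}_s\hat u_3$ and $\mathcal{F}_s\hat\theta$. Each of these is a linear combination of Fourier multipliers acting on $\mathcal{F}\hat u_0$ and $\mathcal{F}\hat\theta_0$, all of which carry the common Gaussian factor $e^{\lambda_1 t}=e^{-|\xi_h|^2 t}$ together with trigonometric factors of argument $\frac{|\xi_h|}{|\xi|}t$ and algebraic ratios of $\xi_3,\xi_h$ and $|\xi|$. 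Of the seven multipliers in \eqref{E2.9}--\eqref{E2.11}, five are uniformly bounded by a constant multiple of $e^{-|\xi_h|^2 t}$; the remaining two are singular as $|\xi_h|\to 0$ and demand structural considerations. For the multiplier $\frac{i\xi_h\xi_3}{|\xi_h|^2}(1+\cos(\frac{|\xi_h|}{|\xi|}t))$ acting on $\mathcal{F}_s\hat u_{03}$, I invoke the incompressibility \eqref{E2.5}, namely $i\xi_h\cdot\mathcal{F}_c\hat u_{0h}+\xi_3\mathcal{F}_s\hat u_{03}=0$, which recasts the product as the Leray-type projection $\frac{\xi_h(\xi_h\cdot\mathcal{F}_c\hat u_{0h})}{|\xi_h|^2}(1+\cos)$, pointwise bounded by $2|\mathcal{F}_c\hat u_{0h}|$. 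For $\frac{|\xi|}{|\xi_h|}\sin(\frac{|\xi_h|}{|\xi|}t)$ I use $|\sin y|\le|y|$ to get the bound $t$, and then trade the $t$-factor for inverse powers of $|\xi_h|$ through the heat-type inequality $t^{\alpha}e^{-|\xi_h|^2 t/2}\le C|\xi_h|^{-2\alpha}$, which is then reabsorbed by the $\Lambda^{-\sigma}$ norm on the data.

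With every multiplier thus bounded by $e^{-|\xi_h|^2 t/2}$ times tame algebraic weights, the estimates reduce to heat-type bounds for $|\xi_h|^{2a}e^{-|\xi_h|^2 t/2}$ for various $a\ge 0$. I perform the standard low/high frequency decomposition in $\xi_h$: on $\{|\xi_h|\ge 1\}$, the splitting $e^{-|\xi_h|^2 t/2}\le e^{-t/4}\,e^{-|\xi_h|^2 t/4}$ supplies exponential temporal decay and is closed by $\|v_0\|_{\dot H^s}$; on $\{|\xi_h|\le 1\}$ I write $|\hat v_0|=|\xi_h|^{\sigma}|\widehat{\Lambda^{-\sigma}v_0}|$ and exploit $|\xi_h|^{2\sigma}e^{-|\xi_h|^2 t}\le C(1+t)^{-\sigma}$, which follows from $|\xi_h|^{2\sigma}\le 1$ for $t\le 1$ and from $\sup_{x>0}x^{\sigma}e^{-x}<\infty$ for $t\ge 1$. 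Combining the two regions yields the announced $(1+t)^{-\sigma/2}$ rate in (1). Part (2) is obtained by the same argument with $\partial_3 v_0$ in place of $v_0$, using Lemma \ref{lem1}(2) to commute $\partial_3$ with the cosine/sine transforms; the boundary traces there vanish thanks to \eqref{N1.5}--\eqref{N1.6}. For the first bound in (3), the extra $\nabla_h$ upgrades the low-frequency weight to $|\xi_h|^{2(\sigma+1)}e^{-|\xi_h|^2 t}\le Ct^{-\sigma-1}$, yielding $t^{-(\sigma+1)/2}$; the refined $(1+t)^{-(\sigma+1)/2}$ bound then follows by splitting at $t=1$, using the first part of (3) for $t\ge 1$ and, for $t\le 1$, absorbing low frequencies into $\|\Lambda^{-\sigma}v_0\|_{\dot H^s}$ and high frequencies into $\|\Lambda^{-\sigma}\nabla_h v_0\|_{\dot H^s}$.

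The main obstacle I anticipate is the consistent bookkeeping for the singular multiplier $\frac{|\xi|}{|\xi_h|}\sin(\frac{|\xi_h|}{|\xi|}t)$ in \eqref{E2.11}: at each frequency scale one has to choose the right bound for $|\sin y|$ (either $\le 1$ or $\le|y|$) so that the resulting estimate matches the low/high frequency decomposition and reproduces exactly the announced rates for all three parts. A related subtlety is that the divergence-free condition \eqref{E2.5} must be invoked only on those components $\mathcal{F}_s\hat u_{03}$ that multiply singular kernels, while the other velocity components enter through bounded multipliers and need no such manipulation.
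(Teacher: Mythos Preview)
Your overall strategy is the same as the paper's, and your handling of the five bounded multipliers and of the singular $(1+\cos)$ term via the divergence relation \eqref{E2.5} is correct. The gap is in your treatment of the other singular multiplier, $\frac{|\xi|}{|\xi_h|}\sin\!\big(\frac{|\xi_h|}{|\xi|}t\big)$ acting on $\mathcal{F}_s\hat u_{03}$. Using $|\sin y|\le |y|$ gives the pointwise bound $t\,e^{-|\xi_h|^2 t}|\mathcal{F}_s\hat u_{03}|$, and your proposed trade $t^\alpha e^{-|\xi_h|^2 t/2}\le C|\xi_h|^{-2\alpha}$ with $\alpha=1$ produces a factor $|\xi_h|^{-2}$ which requires $\Lambda_h^{-2}u_{03}\in L^2$, not merely $\Lambda_h^{-\sigma}u_{03}\in L^2$. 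Equivalently, on low frequencies one computes $\sup_{\xi_h}\,t\,|\xi_h|^{\sigma}e^{-|\xi_h|^2 t}\sim C\,t^{1-\sigma/2}$, which blows up for every $\sigma<2$; hence neither the choice $|\sin y|\le 1$ (still singular as $|\xi_h|\to 0$) nor $|\sin y|\le|y|$ (growing in $t$) can close the estimate with only the $\Lambda^{-\sigma}$ hypothesis.

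The fix, and what the paper does, is to invoke incompressibility \emph{again} on this term: write $|\xi|\le |\xi_h|+|\xi_3|$ so that
\[
\left|\frac{|\xi|}{|\xi_h|}\sin\!\Big(\tfrac{|\xi_h|}{|\xi|}t\Big)\,\mathcal{F}_s\hat u_{03}\right|
\le |\mathcal{F}_s\hat u_{03}|+\frac{|\xi_3|}{|\xi_h|}\,|\mathcal{F}_s\hat u_{03}|,
\]
and then use $\xi_3\mathcal{F}_s\hat u_{03}=-i\xi_h\cdot\mathcal{F}_c\hat u_{0h}$ to bound the second piece by $|\mathcal{F}_c\hat u_{0h}|$. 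This reduces the problematic multiplier to a \emph{bounded} one acting on $\mathcal{F}_c\hat u_{0h}$, after which your low/high frequency decomposition goes through verbatim. The same manoeuvre is needed in part (2) for the terms $\frac{i\xi_h\xi_3^2}{|\xi_h|^2}(1+\cos)\mathcal{F}_s\hat u_{03}$ and $\frac{|\xi|\xi_3}{|\xi_h|}\sin\,\mathcal{F}_s\hat u_{03}$: one application of \eqref{E2.5} converts $\xi_3\mathcal{F}_s\hat u_{03}$ into a bounded multiple of $\mathcal{F}_c\hat u_{0h}$, and the remaining $\xi_3$ then lands on $\mathcal{F}_c\hat u_{0h}$ as $\mathcal{F}_s\widehat{\partial_3 u_{0h}}$, which is exactly the $\partial_3 v_0$ data you assume. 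In short, you need to use the divergence-free relation on \emph{both} singular kernels, not just the $(1+\cos)$ one.
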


\begin{proof} Due to the frequency decoupling in the solution $V(t)v_0$ given by \eqref{E2.9}-\eqref{E2.11}, it suffices to set $s=0$ and consider the $L^2-$norm. Start with the estimate of $U_h(x,t)$, the first term in \eqref{E2.9} is easily bounded. In fact, for any $0\leqslant t<1$,
\begin{equation}\label{E2.15}
\|e^{\lambda_1t}\mathcal{F}_c\hat{u}_{0h}\|_{L^2}\leqslant\|\mathcal{F}_c\hat{u}_{0h}\|_{L^2}=2\pi^3\|u_{0h}\|_{L^2}.
\end{equation}
For $t\ge 1,$
\begin{equation}\label{E2.16}
\|e^{\lambda_1t}\mathcal{F}_c\hat{u}_{0h}\|_{L^2}=\||\xi_h|^\sigma e^{-|\xi_h|^2t}|\xi_h|^{-\sigma}\mathcal{F}_c\hat{u}_{0h}\|_{L^2}\leqslant Ct^{-\frac{\sigma}{2}}\||\xi_h|^{-\sigma}\mathcal{F}_c\hat{u}_{0h}\|_{L^2}\leqslant Ct^{-\frac{\sigma}{2}}\|\Lambda_h^{-\sigma}u_{0h}\|_{L^2}.
\end{equation}
Combining \eqref{E2.15} and \eqref{E2.16} yields
\begin{equation*}
\|e^{\lambda_1t}\mathcal{F}_c\hat{u}_{0h}\|_{L^2} \leqslant C(\|u_{0h}\|_{L^2}+\|\Lambda_h^{-\sigma}u_{0h}\|_{L^2})(1+t)^{-\frac{\sigma}{2}}.
\end{equation*}
From above procedure, we find it is sufficient to consider the estimates for $t\ge 1.$  To dealt with the second term in \eqref{E2.9}, we can find that
\begin{equation*}
\begin{aligned}
\left\|\frac{\ri \xi_h\xi_3}{|\xi_h|^2}\left(1+\cos\frac{|\xi_h|}{|\xi|}t\right)e^{\lambda_1t}\mathcal{F}_s\hat{u}_{03}\right\|_{L^2}&\leqslant2\left\|\frac{\ri \xi_h}{|\xi_h|^2}e^{\lambda_1t}\xi_3\mathcal{F}_s\hat{u}_{03}\right\|_{L^2}\leqslant2\left\|\frac{1}{|\xi_h|}e^{\lambda_1t}\xi_h\cdot\mathcal{F}_c\hat{u}_{0h}\right\|_{L^2}\\
&\leqslant 2\left\|e^{\lambda_1t}\mathcal{F}_c\hat{u}_{0h}\right\|_{L^2}\leqslant C(\|u_{0h}\|_{L^2}+\|\Lambda_h^{-\sigma}u_{0h}\|_{L^2})(1+t)^{-\frac{\sigma}{2}}.
\end{aligned}
\end{equation*}

 It is not difficult to see that
$$\left\|\frac{\ri \xi_h\xi_3}{|\xi||\xi_h|}\left(\sin\frac{|\xi_h|}{|\xi|}t\right) e^{\lambda_1t} \mathcal{F}_s\hat{\theta}_{0}\right\|_{L^2}\leqslant C\left(\|e^{\lambda_1t} \mathcal{F}_s\hat{\theta}_{0})\|_{L^2}\right). $$
By the argument from the \eqref{E2.15} and \eqref{E2.16}, one obtains that
 \begin{equation*}
 \left\|\frac{\ri \xi_h\xi_3}{|\xi_h||\xi|}\left(\sin\frac{|\xi_h|}{|\xi|}t\right) e^{\lambda_1t} \mathcal{F}_s\hat{\theta}_{0}\right\|_{L^2}\leqslant C\left(\|\theta_0\|_{L^2}+\|\Lambda^{-\sigma}\theta_0\|_{L^2}\right)(1+t)^{-\frac{\sigma}{2}}.
 \end{equation*}
 Therefore, we obtain the following estimate
$$\|U_h(x,t)\|_{L^2}\leqslant C(\|v_0\|_{L^2}+\|\Lambda^{-\sigma}v_0\|_{L^2})(1+t)^{-\frac{\sigma}{2}}.$$
Applying the formula \eqref{E2.10} and the argument from the \eqref{E2.15} and \eqref{E2.16} again, we know that
\begin{equation*}
\begin{aligned}
\|\mathcal{F}_s\hat{u}_3\|_{L^2}&=\left\|\left(\cos\frac{|\xi_h|}{|\xi|}t\right) e^{\lambda_1t}\mathcal{F}_s\hat{u}_{03}
+\left(\frac{|\xi_h|}{|\xi|}\sin\frac{|\xi_h|}{|\xi|}t\right)e^{\lambda_1t}\mathcal{F}_s\hat{\theta}_{0}\right\|_{L^2}\\
&\leqslant \left(\|e^{\lambda_1t}\mathcal{F}_s\hat{u}_{03}\|_{L^2}+\|e^{\lambda_1t} \mathcal{F}_s\hat{\theta}_{0})\|_{L^2}\right)\\
&\leqslant C\left(\|(u_{03},\theta_0)\|_{L^2}+\|\Lambda^{-\sigma}(u_{03},\theta_0)\|_{L^2}\right)(1+t)^{-\frac{\sigma}{2}}
\end{aligned}
\end{equation*}
Since the factor $\frac{|\xi|}{|\xi_h|}\geqslant 1,$  there is a subtle difference   for the decay of $\hat{\theta}_s$. By the incompressibility \eqref{E2.6}, we observe that
\begin{equation*}
\begin{aligned}
\|I_1\|_{L^2}&=\left\|\frac{|\xi|}{|\xi_h|}\sin\frac{|\xi_h|}{|\xi|}t e^{\lambda_1t}\mathcal{F}_s\hat{u}_{03}\right\|_{L^2}\\
&\leqslant\left\|\frac{|\xi_h|+|\xi_3|}{|\xi_h|}\sin\frac{|\xi_h|}{|\xi|}t e^{\lambda_1t}\mathcal{F}_s\hat{u}_{03}\right\|_{L^2}\\
&\leqslant\|e^{\lambda_1t}\mathcal{F}_s\hat{u}_{03}\|_{L^2}+\left\|e^{\lambda_1t}\frac{1}{|\xi_h|}|\xi_h\cdot\mathcal{F}_c\hat{u}_{0h}| \right\|_{L^2}\\
&\leqslant C(\|u_0\|_{L^2}+\|\Lambda_h^{-\sigma}u_0\|_{L^2})(1+t)^{-\frac{\sigma}{2}}.
\end{aligned}
\end{equation*}
It is easy to see that
\begin{equation*}
\begin{aligned}
\|\hat{\theta}_s\|&\leqslant \|I_1\|_{L^2}+\| \mathcal{F}_s\hat{\theta}_{0}\|_{L^2}\\
&\leqslant C(\|v_0\|_{L^2}+\|\Lambda_h^{-\sigma}v_0\|_{L^2})(1+t)^{-\frac{\sigma}{2}}.
\end{aligned}
\end{equation*}
 By the Lemma \ref{lem1} and these estimates, one has finished the proof of \eqref{R2.11}.  In order to verify the \eqref{R2.12}, from Lemma\ref{lem1}, it suffices to check the estimate for $\|\xi_3\mathcal{F}U\|_{L^2}$. Actually, these estimates  are quite similar to those for \eqref{R2.11} except for two terms, i.e. $I_2=\left\|\frac{\ri \xi_h\xi^2_3}{|\xi_h|^2}\left(1+\cos\frac{|\xi_h|}{|\xi|}t\right)e^{\lambda_1t}\mathcal{F}_s\hat{u}_{03}\right\|_{L^2}$ and $I_3=\left\|\frac{|\xi|\xi_3}{|\xi_h|}\sin\frac{|\xi_h|}{|\xi|}t e^{\lambda_1t}\mathcal{F}_s\hat{u}_{03}\right\|_{L^2}$.  It is similar between the estimate of $I_2$ and the one of $I_3$. From the incompressibility and Lemma \ref{lem1}, one infers
 \begin{equation*}
 \begin{aligned}
 I_2=\left\|\frac{\ri \xi_h\xi^2_3}{|\xi_h|^2}\left(1+\cos\frac{|\xi_h|}{|\xi|}t\right)e^{\lambda_1t}\mathcal{F}_s\hat{u}_{03}\right\|_{L^2}\leqslant 2\left\|\frac{ \xi_h\xi_3}{|\xi_h|^2}e^{\lambda_1t}\xi_h\cdot\mathcal{F}_c\hat{u}_{0h}\right\|_{L^2}\\
 \leqslant 2\left\| \xi_3 e^{\lambda_1t} \mathcal{F}_c\hat{u}_{0h}\right\|_{L^2}\leqslant C \left\|e^{\lambda_1t} \mathcal{F}_s\widehat{\p_3u}_{0h}\right\|_{L^2}\\
 \leqslant C(\|\p_3u_{0h}\|_{L^2}+\|\Lambda^{-\sigma}\p_3u_{0h}\|)(1+t)^{-\frac{\sigma}{2}}.
 \end{aligned}
 \end{equation*}
 Therefore, we have checked  \eqref{R2.12}.

  Noticing that it is easy to obtain the extra decay factor by the following inequality, for any $t>0,$
 $$\| |\xi_h|e^{|\xi_h|^2t}\mathcal{F}f\|_{L^2}\leqslant Ct^{-\frac{\sigma+1}{2}}\|\Lambda^{-\sigma}_hf\|_{L^2}.$$
 This obtains \eqref{R2.13}. Combining \eqref{R2.13} for $t\geqslant 1$ and the basic inequality with $0\leqslant t<1,$
 $$\| |\xi_h|e^{-|\xi_h|^2t}f\|\leqslant C\| |\xi_h|f \|_{L^2}$$
 leads to \eqref{R2.14}. This completes the proof.
\end{proof}

\section{Decays of solutions to the nonlinear system}\label{S4}

In this section, we obtain the decays of solutions to nonlinear system. To reach it, we need some technique lemmas.
\begin{lem}\label{lem2}
Let $2\leqslant p<\infty$ and $s>\frac{1}{2}-\frac{1}{p}$. Then, there exists a constant $C=C(p,s)$ such that, for any $f\in H^s(\br)$,
\begin{equation}\label{I4.1}
\|f\|_{L^p}\leqslant C\|f\|^{1-\frac{1}{s}\left(\frac{1}{2}-\frac{1}{p}\right)}_{L^2}\|\Lambda^s f\|^{\frac{1}{s}\left(\frac{1}{2}-\frac{1}{p}\right)}_{L^2}.
\end{equation}
\end{lem}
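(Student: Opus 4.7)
The statement is a standard fractional Gagliardo--Nirenberg inequality in one dimension, and I would prove it by frequency decomposition. Let $\theta := \frac{1}{s}\bigl(\frac{1}{2}-\frac{1}{p}\bigr)$; the hypothesis $s>\frac{1}{2}-\frac{1}{p}$ together with $p\geq 2$ guarantees $0\leq \theta<1$, which is precisely the range in which the interpolation identity is dimensionally consistent. When $p=2$ the claim is trivial, so assume $p>2$.

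First, I would perform a Littlewood--Paley decomposition $f=\sum_{j\in\bz}\Delta_j f$, where $\Delta_j$ projects onto frequencies of size $\sim 2^j$. The key ingredient is the 1D Bernstein inequality, which yields
\begin{equation*}
\|\Delta_j f\|_{L^p}\leqslant C\, 2^{j(\frac{1}{2}-\frac{1}{p})}\|\Delta_j f\|_{L^2}.
\end{equation*}
For an integer $N$ to be chosen, split $\|f\|_{L^p}\leqslant \sum_{j\leqslant N}\|\Delta_j f\|_{L^p}+\sum_{j>N}\|\Delta_j f\|_{L^p}$. Applying Bernstein's inequality and summing the geometric series (the low-frequency sum converges because $\frac{1}{2}-\frac{1}{p}>0$; the high-frequency sum converges precisely because $s>\frac{1}{2}-\frac{1}{p}$) gives
\begin{equation*}
\|f\|_{L^p}\leqslant C\, 2^{N(\frac{1}{2}-\frac{1}{p})}\|f\|_{L^2}+C\, 2^{N(\frac{1}{2}-\frac{1}{p}-s)}\|\Lambda^s f\|_{L^2}.
\end{equation*}

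Next, I would optimize in $N$ by balancing the two terms, i.e.\ choosing $N$ so that $2^{Ns}\sim \|\Lambda^s f\|_{L^2}/\|f\|_{L^2}$. Substituting back yields exactly
\begin{equation*}
\|f\|_{L^p}\leqslant C\|f\|_{L^2}^{1-\theta}\|\Lambda^s f\|_{L^2}^{\theta},
\end{equation*}
with $\theta=\frac{1}{s}(\frac{1}{2}-\frac{1}{p})$, which is the desired inequality. A minor technicality is that $N$ need not be an integer; one simply chooses the nearest integer, which affects only the constant $C$. The only delicate point is the verification of the range condition $s>\frac{1}{2}-\frac{1}{p}$ (needed for convergence of the high-frequency tail) and checking that the optimization is admissible (which requires $0<\theta<1$); neither is a real obstacle, so the argument is short and routine, with the main work being the careful bookkeeping of the exponents in the geometric sums.
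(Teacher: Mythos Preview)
The paper states this lemma without proof (it is listed among the ``technique lemmas'' at the start of Section~\ref{S4} with no argument supplied), so there is no proof in the paper to compare against. Your Littlewood--Paley/Bernstein argument with optimization over the splitting frequency is correct and is the standard route to this one-dimensional fractional Gagliardo--Nirenberg inequality; the bookkeeping of exponents and the use of the hypothesis $s>\frac{1}{2}-\frac{1}{p}$ to sum the high-frequency tail are exactly right.
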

Next lemma provides an exact $L^p-L^q$ decay estimate for the generalized heat operator associated with a fractional Laplacian (see\cite{JYWu,Wu}).
\begin{lem}\label{lem3}
Let $\beta\geqslant 0, \alpha>0, 1\leqslant p\leqslant q\leqslant\infty.$ Then
\begin{equation}\label{I4.3}
\|\Lambda^\beta e^{-(-\Delta)^\alpha t}f\|_{L^q(\br^d)}\leqslant C t^{-\frac{\beta}{2\alpha}-\frac{d}{2\alpha}\left(\frac{1}{p}-\frac{1}{q}\right)}\|f\|_{L^p(\br^d)}.
\end{equation}
\end{lem}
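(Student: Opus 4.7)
The natural strategy is to realize the operator $\Lambda^\beta e^{-(-\Delta)^\alpha t}$ as convolution with an explicit kernel and then combine a scaling identity with Young's convolution inequality. More precisely, let $K_t$ be the tempered distribution whose Fourier transform is
\begin{equation*}
\widehat{K_t}(\xi)=|\xi|^\beta\, e^{-t|\xi|^{2\alpha}},\qquad \xi\in\br^d,
\end{equation*}
so that $\Lambda^\beta e^{-(-\Delta)^\alpha t}f=K_t*f$. A direct change of variables in the Fourier inversion formula yields the scaling relation
\begin{equation*}
K_t(x)=t^{-\frac{d+\beta}{2\alpha}}\,K_1\!\left(t^{-\frac{1}{2\alpha}}x\right),\qquad t>0,\ x\in\br^d,
\end{equation*}
and hence, for every $r\in[1,\infty]$,
\begin{equation*}
\|K_t\|_{L^r(\br^d)}=t^{-\frac{\beta}{2\alpha}-\frac{d}{2\alpha}\left(1-\frac{1}{r}\right)}\,\|K_1\|_{L^r(\br^d)}.
\end{equation*}

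The plan is then to apply Young's inequality $\|K_t*f\|_{L^q}\le\|K_t\|_{L^r}\|f\|_{L^p}$ with the exponent $r$ determined by $1+\tfrac{1}{q}=\tfrac{1}{r}+\tfrac{1}{p}$. Since $1\le p\le q\le\infty$ we have $r\in[1,\infty]$, and substituting the scaling identity for $\|K_t\|_{L^r}$ gives the exponent
\begin{equation*}
-\frac{\beta}{2\alpha}-\frac{d}{2\alpha}\Bigl(1-\frac{1}{r}\Bigr)=-\frac{\beta}{2\alpha}-\frac{d}{2\alpha}\Bigl(\frac{1}{p}-\frac{1}{q}\Bigr),
\end{equation*}
which matches the claimed power of $t$ exactly. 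The whole estimate therefore reduces to the $t$-independent fact $\|K_1\|_{L^r(\br^d)}<\infty$ for every $r\in[1,\infty]$.

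The technical core is thus to verify that $K_1\in L^1(\br^d)\cap L^\infty(\br^d)$; the full range $1\le r\le\infty$ then follows by log-convexity (interpolation) of the $L^r$ norms. The $L^\infty$ bound is immediate from Hausdorff--Young, since $|\xi|^\beta e^{-|\xi|^{2\alpha}}\in L^1(\br^d)$. For the $L^1$ bound one needs pointwise decay of $K_1(x)$ at infinity; this is the main obstacle because for general fractional $\alpha>0$ the kernel is no longer Gaussian and has only algebraic decay. The standard remedy is to split the Fourier integral with a smooth cut-off $\chi$ at frequency $|\xi|\sim 1$: on the bounded piece $|\xi|^\beta e^{-|\xi|^{2\alpha}}\chi(\xi)$ is smooth and compactly supported, hence its inverse Fourier transform is Schwartz; on the unbounded piece one integrates by parts $N$ times in $\xi$, using that $e^{-|\xi|^{2\alpha}}$ and all its derivatives decay faster than any polynomial, to obtain $|K_1(x)|\le C_N(1+|x|)^{-N}$ for every $N$. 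Combining the two pieces gives $K_1\in L^1(\br^d)$, completing the proof via the Young inequality argument above.
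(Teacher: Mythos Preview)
The paper does not actually prove this lemma; it records it as a known tool and refers to \cite{JYWu,Wu}. Your overall strategy---realize the operator as convolution with $K_t$, use the scaling identity $K_t(x)=t^{-(d+\beta)/2\alpha}K_1(t^{-1/2\alpha}x)$, and then reduce matters to $K_1\in L^1(\br^d)\cap L^\infty(\br^d)$ via Young's inequality---is the standard route, and your computation of the $t$-exponent is correct. The $L^\infty$ bound on $K_1$ via $\widehat{K_1}\in L^1$ is also fine.

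There is, however, a genuine gap in your verification of $K_1\in L^1$. You assert that the low-frequency piece $|\xi|^\beta e^{-|\xi|^{2\alpha}}\chi(\xi)$ is ``smooth and compactly supported, hence its inverse Fourier transform is Schwartz''. This is false in general: neither $|\xi|^\beta$ (for $\beta$ not an even nonnegative integer) nor $e^{-|\xi|^{2\alpha}}$ (for $\alpha\notin\bn$) is smooth at $\xi=0$, so the low-frequency symbol typically carries a genuine singularity there, and $K_1$ enjoys only \emph{algebraic} decay at infinity. A concrete counterexample is $\beta=0,\ \alpha=\tfrac12$: then $K_1$ is the Poisson kernel, which behaves like $|x|^{-d-1}$ as $|x|\to\infty$ and is certainly not Schwartz. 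Your stronger claim $|K_1(x)|\le C_N(1+|x|)^{-N}$ for every $N$ therefore cannot hold. The conclusion $K_1\in L^1$ is nevertheless true, but the argument must be more careful: for $\beta>0$ a Littlewood--Paley decomposition gives $\|K_1\|_{L^1}\le C\sum_{j\in\bz}2^{j\beta}e^{-c\,2^{2\alpha j}}<\infty$, while the endpoint $\beta=0$ (the bare semigroup kernel) requires a separate argument---positivity and subordination for $0<\alpha\le 1$, or a direct oscillatory-integral estimate capturing the precise order of the singularity of $e^{-|\xi|^{2\alpha}}$ at the origin. This is the level of detail supplied in the cited references.
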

\textbf{Proof of Theorem 1.2} The bootstrapping argument is suitable for our purpose. We assume the initial datum $v_0=(u_0,\theta_0)$ satisfies  \eqref{I1.6},\eqref{I1.7} and \eqref{I1.8} for $\varepsilon>0$ small enough. The bootstrapping argument start with the ansatz that, for a suitably chosen $C_0>0,$
\begin{eqnarray}
&&\|u(t)\|_{H^3 }+\|\theta(t)\|_{H^3 }\leqslant C_0\varepsilon, \label{I4.4}\\
&&\|\Lambda_h^{-\sigma} u(t)\|_{L^2}+\|\Lambda_h^{-\sigma}\theta(t)\|_{L^2}\leqslant C_0\varepsilon, \label{I4.5}\\
&&\|u(t)\|_{L^2},\|\theta(t)\|_{L^2}\leqslant C_0\varepsilon (1+t)^{-\frac{\sigma}{2}+\delta},\label{I4.6}\\
&&\|\partial_3u_h(t)\|_{L^2},\|\partial_3\theta(t)\|_{L^2}\leqslant C_0\varepsilon (1+t)^{-\frac{\sigma}{2}+3\delta},\label{I4.7}\\
&&\|\nabla_h u(t)\|_{L^2},\|\nabla_h\theta(t)\|_{L^2}\leqslant C_0\varepsilon (1+t)^{-\frac{\sigma+1}{2}+\delta},\label{I4.8}
\end{eqnarray}
for $t\in [0,T]$ with some $T>0.$ The inequalities from \eqref{I4.4} to \eqref{I4.8} hold on the initial time interval $[0,T]$ by local existence. We shall show that \eqref{I4.4}, \eqref{I4.5},\eqref{I4.6}, \eqref{I4.7} and \eqref{I4.8} remain true with $C_0$ replaced by ${C_0}/{2},$ namely
\begin{eqnarray}
&&\|u(t)\|_{H^3 }+\|\theta(t)\|_{H^3 }\leqslant \frac{C_0}{2}\varepsilon, \label{I4.9}\\
&&\|\Lambda_h^{-\sigma} u(t)\|_{L^2}+\|\Lambda_h^{-\sigma}\theta(t)\|_{L^2}\leqslant \frac{C_0}{2}\varepsilon, \label{I4.10}\\
&&\|u(t)\|_{L^2},\|\theta(t)\|_{L^2}\leqslant \frac{C_0}{2}\varepsilon (1+t)^{-\frac{\sigma}{2}+\delta},\label{I4.11}\\
&&\|\partial_3u(t)\|_{L^2},\|\partial_3\theta(t)\|_{L^2}\leqslant \frac{C_0}{2}\varepsilon (1+t)^{-\frac{\sigma}{2}+3\delta},\label{I4.12}\\
&&\|\nabla_h u(t)\|_{L^2},\|\nabla_h\theta(t)\|_{L^2}\leqslant \frac{C_0}{2}\varepsilon (1+t)^{-\frac{\sigma+1}{2}+\delta}.\label{I4.13}
\end{eqnarray}
The bootstrapping argument then assert that inequalities from \eqref{I4.9} to \eqref{I4.13} hold for all $t>0.$ In the proof of Theorem \ref{T1.1}, we check that
$$\|u(t)\|_{H^2 }+\|\theta(t)\|_{H^2 }\leqslant C_1\varepsilon.$$
Then the inequality \eqref{I4.9} holds as one takes $C_0\geqslant 2C_1.$

 Applying the same argument of subsection 5.1 in \cite{JYWu}, it is easy to obtain the following bound
 \begin{equation*}
 \begin{aligned}
 &\|\Lambda^{-\sigma}_h u\|_{L^2}^2+\|\Lambda^{-\sigma}_h\theta\|_{L^2}^2+2\int_0^t\left(\|\Lambda^{1-\sigma}_h u\|_{L^2}^2+\|\Lambda^{1-\sigma}_h\theta\|_{L^2}^2\right)\dd\tau \\
 &\leqslant C\int_0^t\|u_3\|_{L^2}^{\sigma-\frac{1}{2}}\|\nabla_h u\|_{L^2}^{\frac{3}{2}-\sigma}\|\p_3 u_h\|_{L^2}\|\Lambda^{-\sigma}u\|_{L^2}\dd\tau\\
 &+ C\int_0^t\|u_h\|_{L^2}^{\sigma-\frac{1}{2}}\|\nabla_h u\|_{L^2}^{2-\sigma}\|\p_3 u_h\|_{L^2}^\frac{1}{2}\|\Lambda^{-\sigma}u\|_{L^2}\dd\tau\\
 &+C\int_0^t\|u_3\|_{L^2}^{\sigma-\frac{1}{2}}\|\nabla_h u\|_{L^2}^{\frac{5}{2}-\sigma}\|\Lambda^{-\sigma}u\|_{L^2}\dd\tau\\
 &+C\int_0^t\|u_h\|_{L^2}^{\sigma-\frac{1}{2}}\|\nabla_h u\|_{L^2}^{1-\sigma}\|\p_3 u_h\|_{L^2}^\frac{1}{2}\|\nabla_h u\|_{L^2}\|\Lambda^{-\sigma}\theta\|_{L^2}\dd\tau\\
 &+C\int_0^t\|u_3\|_{L^2}^{\sigma-\frac{1}{2}}\|\nabla_h u\|_{L^2}^{\frac{3}{2}-\sigma}\|\p_3 \theta\|_{L^2}\|\Lambda^{-\sigma}\theta\|_{L^2}\dd\tau\\
 & +\|\Lambda^{-\sigma}_h u_0\|_{L^2}^2+\|\Lambda^{-\sigma}_h\theta_0\|_{L^2}^2.
 \end{aligned}
 \end{equation*}
 By the bounds in \eqref{I4.5}-\eqref{I4.8}, It infers that
 \begin{equation*}
 \begin{aligned}
 &\|\Lambda^{-\sigma}_h u\|_{L^2}^2+\|\Lambda^{-\sigma}_h\theta\|_{L^2}^2+2\int_0^t\left(\|\Lambda^{1-\sigma}_h u\|_{L^2}^2+\|\Lambda^{1-\sigma}_h\theta\|_{L^2}^2\right)\dd\tau \\
 &\leqslant CC_0^3\varepsilon^3\int_0^t(1+\tau)^{-\frac{\sigma}{2}-\frac{3}{4}+4\delta}\dd\tau\\
 &+ CC_0^3\varepsilon^3\left(\int_0^t(1+\tau)^{-\frac{\sigma}{2}-1+\delta}\dd\tau+\varepsilon^3\int_0^t(1+\tau)^{-\frac{\sigma}{2}-\frac{5}{4}+2\delta}\dd\tau\right)\\
& +\|\Lambda^{-\sigma}_h u_0\|_{L^2}^2+\|\Lambda^{-\sigma}_h\theta_0\|_{L^2}^2.
 \end{aligned}
 \end{equation*}

 As $\frac{3}{4}\leqslant\sigma<1$ and $$0\leqslant\delta<\frac{\sigma}{8}-\frac{1}{16},$$ we easily claim the inequality \eqref{I4.10}holds  while taking $\varepsilon>0$ small enough. The rest of the section is divided into three subsections,  we   verify the inequality \eqref{I4.11}, \eqref{I4.12} and \eqref{I4.13} in the first subsection, the second subsection, the third subsection respectively.

\subsection{Estimates of $\|u\|_{L^2},\|\theta\|_{L^2}$ and verification of \eqref{I4.11} }

To verify the inequality \eqref{I4.11}, we take advantage of Duhamel's principle to obtain the formula of the solution to the nonlinear system.
\begin{prop}\label{prop4.1}
The system in \eqref{E1.9} can be converted into the following form
\begin{eqnarray}
&\mathcal{F}_c\widehat{u}_h&=e^{\lambda_1t}\mathcal{F}_c\widehat{u}_{0h}+\frac{\ri \xi_h\xi_3}{|\xi_h|^2}\left(1+\cos\frac{|\xi_h|}{|\xi|}t\right)e^{\lambda_1t}\mathcal{F}_s\widehat{u}_{03}+\frac{\ri \xi_h\xi_3}{|\xi||\xi_h|}\left(\sin\frac{|\xi_h|}{|\xi|}t\right) e^{\lambda_1t} \mathcal{F}_s\widehat{\theta}_{0}\nonumber\\
&&-\int_0^te^{\lambda_1(t-\tau)}\mathcal{F}_c(\widehat{\mathbb{P}u\cdot\nabla u})_h(\tau)\dd\tau \nonumber\\
&&-\int_0^t\frac{\ri \xi_h\xi_3}{|\xi_h|^2}\left(1+\cos\frac{|\xi_h|}{|\xi|}(t-\tau)\right)e^{\lambda_1(t-\tau)}\mathcal{F}_s(\widehat{\mathbb{P}u\cdot\nabla u})_3(\tau)\dd \tau \nonumber\\
&&-\int_0^t\frac{\ri \xi_h\xi_3}{|\xi||\xi_h|}\left(\sin\frac{|\xi_h|}{|\xi|}(t-\tau)\right) e^{\lambda_1(t-\tau)} \mathcal{F}_s\widehat{u\cdot\nabla\theta}(\tau)\dd \tau \label{E4.14}\\
&\mathcal{F}_s\widehat{u}_3&=\left(\cos\frac{|\xi_h|}{|\xi|}t\right) e^{\lambda_1t}\mathcal{F}_s\hat{u}_{03}
+\left(\frac{|\xi_h|}{|\xi|}\sin\frac{|\xi_h|}{|\xi|}t\right)e^{\lambda_1t}\mathcal{F}_s\hat{\theta}_{0},\nonumber\\
&&-\int_0^t\left(\cos\frac{|\xi_h|}{|\xi|}(t-\tau)\right) e^{\lambda_1(t-\tau)}\mathcal{F}_s(\widehat{\mathbb{P}u\cdot\nabla u})_3(\tau)\dd \tau \nonumber\\
&&-\int_0^t\left(\frac{|\xi_h|}{|\xi|}\sin\frac{|\xi_h|}{|\xi|}(t-\tau)\right)e^{\lambda_1(t-\tau)}\mathcal{F}_s\widehat{u\cdot\nabla\theta}(\tau)\dd \tau \label{E4.15}\\
&\mathcal{F}_s \hat{\theta}&=-\left(\frac{|\xi|}{|\xi_h|}\sin\frac{|\xi_h|}{|\xi|}t\right)e^{\lambda_1t}\mathcal{F}_s\hat{u}_{03}
+\left(\cos\frac{|\xi_h|}{|\xi|}t\right) e^{\lambda_1t}\mathcal{F}_s\hat{\theta}_{0} \nonumber\\
&&+\int_0^t\left(\frac{|\xi|}{|\xi_h|}\sin\frac{|\xi_h|}{|\xi|}(t-\tau)\right)e^{\lambda_1(t-\tau)}\mathcal{F}_s(\widehat{\mathbb{P}u\cdot\nabla u})_3(\tau)\dd \tau \nonumber\\
&&- \int_0^t\left(\cos\frac{|\xi_h|}{|\xi|}(t-\tau)\right) e^{\lambda_1(t-\tau)}\mathcal{F}_s\widehat{u\cdot\nabla\theta}(\tau)\dd \tau \label{E4.16}
\end{eqnarray}
where $\lambda_1=-|\xi_h|^2.$
\end{prop}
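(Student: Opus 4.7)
The plan is to view the nonlinear system \eqref{E1.9} as a perturbation of the linearized problem \eqref{E2.1} driven by the convective nonlinearities, and then apply Duhamel's principle in the transformed variables $\mathcal{F}U$. First, I would rewrite \eqref{E1.9} in the abstract form
\begin{equation*}
\partial_t u + \mathbb{P}(u\cdot\nabla u) = \Delta_h u + \mathbb{P}(\theta e_3), \qquad \partial_t\theta + u\cdot\nabla\theta = \Delta_h\theta - u_3,
\end{equation*}
where $\mathbb{P}$ denotes the Leray projector onto divergence-free fields satisfying the Navier boundary conditions \eqref{N1.5}. This is legitimate because the compatibility condition $\partial_3^3 p\big|_{x_3=0}=0$ established in Lemma~\ref{lem01} together with the Navier slip assumption \eqref{N1.5} guarantees that the pressure is recovered from a well-posed Neumann problem analogous to \eqref{E2.6} (with source $\partial_3\theta - \nabla\cdot(u\cdot\nabla u)$). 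Consequently the full system can be written compactly as
\begin{equation*}
\partial_t U = A\, U + N(U),\qquad N(U) := -\bigl(\mathbb{P}(u\cdot\nabla u),\; u\cdot\nabla\theta\bigr),
\end{equation*}
with $A$ the matrix operator \eqref{A2.8} obtained in Section~\ref{S2}.

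Next, I would apply $\mathcal{F}$ (horizontal Fourier transform together with the cosine transform on $u_h$ and sine transform on $u_3$ and $\theta$) to both sides. The key point is that the boundary conditions \eqref{N1.5}, \eqref{N1.6} are exactly the ones that render these transforms compatible with the vertical derivatives appearing in the equations (via Lemma~\ref{lem1}(2), which kills boundary terms only when $u_3,\theta$ vanish and $\partial_3 u_h$ vanish on $x_3=0$). The transformed system is then an inhomogeneous ODE in $t$,
\begin{equation*}
\partial_t \mathcal{F}U(t) = A\,\mathcal{F}U(t) + \mathcal{F}N(U)(t),
\end{equation*}
where $A$ is the $4\times 4$ symbol matrix displayed in \eqref{A2.8}. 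Duhamel's formula yields
\begin{equation*}
\mathcal{F}U(t) = e^{tA}\mathcal{F}U_0 + \int_0^t e^{(t-\tau)A}\,\mathcal{F}N(U)(\tau)\,\mathrm{d}\tau.
\end{equation*}

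Finally, the homogeneous part $e^{tA}\mathcal{F}U_0$ is precisely the solution of the linearized problem computed componentwise in \eqref{E2.9}--\eqref{E2.11}, so the inhomogeneous integrand $e^{(t-\tau)A}\mathcal{F}N(U)(\tau)$ is obtained from the same explicit kernels with $t$ replaced by $t-\tau$ and the initial data $(\hat u_{0h},\hat u_{03},\hat\theta_0)$ replaced by the Fourier/cosine/sine transforms of the nonlinearities $-\mathbb{P}(u\cdot\nabla u)_h$, $-\mathbb{P}(u\cdot\nabla u)_3$ and $-u\cdot\nabla\theta$. Substituting and collecting terms produces exactly the representation \eqref{E4.14}--\eqref{E4.16}.

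The step I expect to require the most care is justifying the Leray projection in the half-space under the Navier-type boundary conditions, in particular verifying that $\mathbb{P}(u\cdot\nabla u)_h$ and $\mathbb{P}(u\cdot\nabla u)_3$ inherit the correct cosine/sine parity so that the Fourier symbols coincide with those appearing in the linearized formula; this is where Lemma~\ref{lem01} is essential, since it ensures that the pressure gradient contributions on the boundary do not generate spurious terms when taking the cosine transform of $\nabla_h p$ or the sine transform of $\partial_3 p$. Once this compatibility is in place, \eqref{E4.14}--\eqref{E4.16} follow directly from Duhamel's principle applied to the transformed system.
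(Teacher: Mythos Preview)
Your proposal is correct and follows essentially the same route as the paper: write the nonlinear system as $\partial_t\mathcal{F}U = A\,\mathcal{F}U + \mathcal{F}N(U)$, apply Duhamel's formula, and read off the kernels from the linearized solution \eqref{E2.9}--\eqref{E2.11}. One small remark: the paper does not actually invoke Lemma~\ref{lem01} at this step; the only boundary compatibility needed for the Leray projector (handled just after the proposition, in \eqref{E4.17}--\eqref{E4.20}) is $u\cdot\nabla u_3\big|_{x_3=0}=0$, which follows directly from \eqref{N1.5}, so your concern about $\partial_3^3 p$ is more than what is required here.
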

\begin{proof}
Set $$\hat{F}(\tau)=\left(
                \begin{array}{c}
                  \mathcal{F}_c(\widehat{\mathbb{P}u\cdot\nabla u})_h(\tau) \\
                  \mathcal{F}_s(\widehat{\mathbb{P}u\cdot\nabla u})_3(\tau) \\
                  \mathcal{F}_s\widehat{u\cdot\nabla\theta}(\tau)\\
                \end{array}
              \right)
$$
by the Duhamel's principle,
$$\mathcal{F}U(\xi_h,\xi_3,t)=\left(
                                       \begin{array}{c}
                                         \mathcal{F}_c \hat{u}_1(\xi_h,\xi_3,t) \\
                                         \mathcal{F}_c \hat{u}_2(\xi_h,\xi_3,t) \\
                                         \mathcal{F}_s \hat{u}_3(\xi_h,\xi_3,t) \\
                                         \mathcal{F}_s \hat{\theta}(\xi_h,\xi_3,t) \\
                                       \end{array}
                                     \right)=e^{At}\mathcal{F}v_0+\int_0^te^{A(t-\tau)}\hat{F}(\tau)\dd\tau.$$
Substituting the matrix $A$ in \eqref{A2.8} into the formula above, we easily obtain the solutions represented by \eqref{E4.14},\eqref{E4.15} and \eqref{E4.16}.
\end{proof}
In the representation of solutions, we must dealt with the terms $\mathbb{P}(u\cdot\nabla u)$. To do it, by the Helmholtz decomposition Theorem, we know that $\mathbb{P}(u\cdot\nabla u)=u\cdot\nabla u-\nabla\psi.$ Since $u\cdot\nabla u_3|_{x_3=0}=0,$  it implies that  $\psi$ satisfies the following problem
\begin{eqnarray}
\begin{cases}
\Delta\psi=\di(u\cdot\nabla u), &x_3>0,\\
\frac{\p\psi}{\p x_3}=0, &\text{on}~ x_3=0.
\end{cases}\label{E4.17}
\end{eqnarray}
Apply Fourier cosine transform to \eqref{E4.17}, we can find that
\begin{equation}\label{E4.18}
\widehat{\psi}_c=-\frac{\ri \xi_h}{|\xi|^2}\cdot\mathcal{F}_c(\widehat{u\cdot\nabla u_h})-\frac{\xi_3}{|\xi|^2}\mathcal{F}_s(\widehat{u\cdot\nabla u_3})
\end{equation}
From \eqref{E4.18}, we   calculate that
\begin{eqnarray}
&\mathcal{F}_c(\widehat{\mathbb{P}u\cdot\nabla u})_h&=\mathcal{F}_c(\widehat{u\cdot\nabla u_h})-\sum\limits_{k=1}^2\frac{ \xi_h\xi_k}{|\xi|^2}\mathcal{F}_c(\widehat{u\cdot\nabla u_k})+\frac{\ri\xi_h\xi_3}{|\xi|^2}\mathcal{F}_s(\widehat{u\cdot\nabla u_3}),\label{E4.19}\\
&\mathcal{F}_s(\widehat{\mathbb{P}u\cdot\nabla u})_3&=\frac{|\xi_h|^2}{|\xi|^2}\mathcal{F}_s(\widehat{u\cdot\nabla u_3})+\frac{ \ri \xi_3\xi_h}{|\xi|^2}\cdot\mathcal{F}_c(\widehat{u\cdot\nabla u_h})\label{E4.20}
\end{eqnarray}
Now we check the estimates of $\|u\|_{L^2}$ and $\|\theta\|_{L^2}$. In Proposition \ref{prop4.1},  observed that the linear parts in the formula of solutions have been estimated in Proposition \ref{prop2.1}, it is sufficient to check the integral parts. For brevity, we denote these parts in \eqref{E4.14}-\eqref{E4.16} as
\begin{eqnarray*}
&J_1&=\int_0^te^{\lambda_1(t-\tau)}\mathcal{F}_c(\widehat{\mathbb{P}u\cdot\nabla u})_h(\tau)\dd\tau,\\
&J_2&=\int_0^t\frac{\ri \xi_h\xi_3}{|\xi_h|^2}\left(1+\cos\frac{|\xi_h|}{|\xi|}(t-\tau)\right)e^{\lambda_1(t-\tau)}\mathcal{F}_s(\widehat{\mathbb{P}u\cdot\nabla u})_3(\tau)\dd \tau,\\
&J_3&=\int_0^t\frac{\ri \xi_h\xi_3}{|\xi||\xi_h|}\left(\sin\frac{|\xi_h|}{|\xi|}(t-\tau)\right) e^{\lambda_1(t-\tau)} \mathcal{F}_s\widehat{u\cdot\nabla\theta}(\tau)\dd \tau,\\
&J_4&=\int_0^t\left(\cos\frac{|\xi_h|}{|\xi|}(t-\tau)\right) e^{\lambda_1(t-\tau)}\mathcal{F}_s(\widehat{\mathbb{P}u\cdot\nabla u})_3(\tau)\dd \tau, \\
&J_5&=\int_0^t\left(\frac{|\xi_h|}{|\xi|}\sin\frac{|\xi_h|}{|\xi|}(t-\tau)\right)e^{\lambda_1(t-\tau)}\mathcal{F}_s\widehat{u\cdot\nabla\theta}(\tau)\dd \tau,\\
&J_6&=\int_0^t\left(\frac{|\xi|}{|\xi_h|}\sin\frac{|\xi_h|}{|\xi|}(t-\tau)\right)e^{\lambda_1(t-\tau)}\mathcal{F}_s(\widehat{\mathbb{P}u\cdot\nabla u})_3(\tau)\dd \tau,\\
&J_7&=\int_0^t\left(\cos\frac{|\xi_h|}{|\xi|}(t-\tau)\right) e^{\lambda_1(t-\tau)}\mathcal{F}_s\widehat{u\cdot\nabla\theta}(\tau)\dd \tau.
\end{eqnarray*}
 Noticing that the structure of the formula above, we can classify four kinds, namely, one kind is $J_1$, the second kind is $J_2,J_6$  together, the third kind is collected by $J_3,J_5,J_7$ and the last kind is $J_4.$ Denote $U(t)v_0$ by the solution of the system \eqref{E1.9} with boundary conditions \eqref{N1.5} and \eqref{N1.6} , by the Lemma \ref{lem1},
  $$\|U(t)v_0\|_{L^2}\leqslant C\|\mathcal{F}(U(t)v_0)\|_{L^2}\leqslant \|e^{At}\mathcal{F}(v_0)\|_{L^2}+\sum\limits_{k=1}^7\|J_k\|_{L^2}.$$
From the \eqref{E4.19} and Minkowski's inequality, we have got
\begin{equation*}
\begin{aligned}
\|J_1\|_{L^2}&\leqslant \int_0^t\|e^{\lambda_1 (t-\tau)}\mathcal{F}_c\widehat{\mathbb{P}(u\cdot\nabla u)_h}\|_{L^2}\dd\tau\\
&\leqslant 2\int_0^t\|e^{\lambda_1 (t-\tau)}\mathcal{F}_c\widehat{(u\cdot\nabla u_h)}\|_{L^2}\dd\tau+\int_0^t \|e^{\lambda_1 (t-\tau)}\mathcal{F}_s\widehat{(u\cdot\nabla u_3)}\|_{L^2}\dd\tau\\
&:=2J_{11}+J_{12}.
\end{aligned}
\end{equation*}
Since $\lambda_1=-|\xi_h|,$ then
\begin{equation*}
\begin{aligned}
 J_{11} &= \int_0^t\|e^{\lambda_1 (t-\tau)}\mathcal{F}_c\widehat{(u\cdot\nabla u_h)}\|_{L^2}\dd\tau\\
 &=\int_0^t\|e^{\Delta_h(t-\tau)}u_h\cdot\nabla_h u_h\|_{L^2}\dd\tau+\int_0^t\|e^{\Delta_h(t-\tau)}u_3\cdot\p_3 u_h\|_{L^2}\dd\tau\\
 &:=J_{111}+J_{112}.
\end{aligned}
\end{equation*}
By \eqref{II2.1} and Minkowski's inequality,
\begin{equation*}
\begin{aligned}
 J_{111} &\leqslant\int_0^t\left\|\|e^{\Delta_h(t-\tau)}u_h\cdot\nabla_h u_h\|_{L^2_{x_h}}\right\|_{L^2_{x_3}}\dd\tau \\
 &\leqslant C \int_0^t\left\|   (t-\tau)^{-\frac{1}{2}}\|u_h\cdot\nabla_h u_h\|_{L^1_{x_h}}\right\|_{L^2_{x_3}}\dd\tau \\
 &\leqslant C \int_0^t   (t-\tau)^{-\frac{1}{2}}\left\|\|u_h\|_{L^2_{x_h}}\|\nabla_h u_h\|_{L^2_{x_h}}\right\|_{L^2_{x_3}}\dd\tau\\
 &\leqslant C \int_0^t  (t-\tau)^{-\frac{1}{2}} \|u_h\|_{L^\infty_{x_3}L^2_{x_h}}\|\nabla_h u_h\|_{L^2_{x_3}L^2_{x_h}}\dd\tau\\
 &\leqslant C \int_0^t  (t-\tau)^{-\frac{1}{2}} \|u_h\|_{L^2_{x_h}L^\infty_{x_3}}\|\nabla_h u_h\|_{ L^2 }\dd\tau\\
& \leqslant C \int_0^t   (t-\tau)^{-\frac{1}{2}} \|u_h\|^{\frac{1}{2}}_{L^2_{x_h}L^2_{x_3}}\|\p_3u_h\|^{\frac{1}{2}}_{L^2_{x_h}L^2_{x_3}}\|\nabla_h u_h\|_{ L^2 }\dd\tau\\
  &\leqslant C \int_0^t   (t-\tau)^{-\frac{1}{2}} \|u_h\|^{\frac{1}{2}}_{L^2}\|\p_3u_h\|^{\frac{1}{2}}_{L^2}\|\nabla_h u_h\|_{ L^2 }\dd\tau
\end{aligned}
\end{equation*}
Invoking \eqref{I4.6}-\eqref{I4.8}, we have
\begin{equation*}
\begin{aligned}
 J_{111} & \leqslant C C_0^2\varepsilon^2\int_0^t   (t-\tau)^{-\frac{1}{2}} (1+\tau)^{-\frac{\sigma}{4}+\frac{\delta}{2}}(1+\tau)^{-\frac{\sigma}{4}+\frac{3\delta}{2}}(1+\tau)^{-\frac{\sigma+1}{2}+\delta}\dd\tau\\
 &\leqslant C C_0^2\varepsilon^2\int_0^t (t-\tau)^{-\frac{1}{2}}(1+\tau)^{-\sigma-\frac{1}{2}+3\delta}\dd\tau\leqslant CC_0^2\varepsilon^2(1+t)^{-\frac{\sigma}{2}+\delta}(0\leqslant\delta\leqslant\frac{\sigma}{4})
\end{aligned}
\end{equation*}
After similar calculation as the term $J_{111}$, we can conclude that
\begin{equation*}
\begin{aligned}
 J_{112}  &\leqslant C \int_0^t  (t-\tau)^{-\frac{1}{2}} \|u_3\|_{L^\infty_{x_3}L^2_{x_h}}\|\p_3 u_h\|_{L^2_{x_3}L^2_{x_h}}\dd\tau\\
 &\leqslant C \int_0^t  (t-\tau)^{-\frac{1}{2}} \|u_3\|_{L^2_{x_h}L^\infty_{x_3}}\|\p_3 u_h\|_{ L^2 }\dd\tau\\
 &\leqslant C \int_0^t   (t-\tau)^{-\frac{1}{2}} \|u_3\|^{\frac{1}{2}}_{L^2_{x_h}L^2_{x_3}}\|\p_3u_3\|^{\frac{1}{2}}_{L^2_{x_h}L^2_{x_3}}\|\p_3 u_h\|_{ L^2 }\dd\tau\\
  &\leqslant C \int_0^t   (t-\tau)^{-\frac{1}{2}} \|u_3\|^{\frac{1}{2}}_{L^2}\|\nabla_h\cdot u_h\|^{\frac{1}{2}}_{L^2}\|\p_3 u_h\|_{ L^2 }\dd\tau\\
   & \leqslant C C_0^2\varepsilon^2\int_0^t   (t-\tau)^{-\frac{1}{2}} (1+\tau)^{-\frac{\sigma}{4}+\frac{\delta}{2}}(1+\tau)^{-\frac{\sigma}{2}+ 3\delta}(1+\tau)^{-\frac{\sigma+1}{4}+\frac{\delta}{2}}\dd\tau\\
 &\leqslant C C_0^2\varepsilon^2\int_0^t (t-\tau)^{-\frac{1}{2}}(1+\tau)^{-\sigma-\frac{1}{4}+4\delta}\dd\tau\leqslant CC_0^2\varepsilon^2(1+t)^{-\frac{\sigma}{2}+\delta}\,\, (0\leqslant\delta\leqslant\frac{\sigma}{6}-\frac{1}{12}).
\end{aligned}
\end{equation*}
This proves that
\begin{equation}\label{J4.21}
J_{11}\leqslant CC_0^2\varepsilon^2(1+t)^{-\frac{\sigma}{2}+\delta}\,\,  (0\leqslant\delta\leqslant\frac{\sigma}{6}-\frac{1}{12}).
\end{equation}
Since \begin{equation*}
\begin{aligned}
 J_{12} &= \int_0^t\|e^{\lambda_1 (t-\tau)}\mathcal{F}_s\widehat{(u\cdot\nabla u_3)}\|_{L^2}\dd\tau\\
 &=\int_0^t\|e^{\Delta_h(t-\tau)}u_h\cdot\nabla_h u_3\|_{L^2}\dd\tau+\int_0^t\|e^{\Delta_h(t-\tau)}u_3\cdot\p_3 u_3\|_{L^2}\dd\tau\\
 &=\int_0^t\|e^{\Delta_h(t-\tau)}u_h\cdot\nabla_h u_3\|_{L^2}\dd\tau+\int_0^t\|e^{\Delta_h(t-\tau)}u_3\cdot(\nabla_h\cdot u_h)\|_{L^2}\dd\tau\\
 &\leqslant \int_0^t (t-\tau)^{^{-\frac{1}{2}}} \left(\|u_h\|_{L^\infty_{x_3}L^2_{x_h}}\|\nabla_h u_3\|_{L^2_{x_3}L^2_{x_h}}+ \|u_3\|_{L^\infty_{x_3}L^2_{x_h}}\|\nabla_h\cdot u_h\|_{L^2_{x_3}L^2_{x_h}}\right)\dd\tau,\\
\end{aligned}
\end{equation*}
thus
\begin{equation*}\begin{aligned}
 J_{12}
 &\leqslant C \int_0^t   (t-\tau)^{-\frac{1}{2}} \|u_h\|^{\frac{1}{2}}_{L^2}\|\p_3u_h\|^{\frac{1}{2}}_{L^2}\|\nabla_h u_3\|_{ L^2 }\dd\tau+ \int_0^t   (t-\tau)^{-\frac{1}{2}} \|u_3\|^{\frac{1}{2}}_{L^2}\|\nabla\cdot u_h\|^{\frac{3}{2}}_{L^2} \dd\tau\\
&\leqslant C C_0^2\varepsilon^2\int_0^t (t-\tau)^{-\frac{1}{2}}(1+\tau)^{-\sigma-\frac{1}{2}+3\delta}\dd\tau+C C_0^2\varepsilon^2\int_0^t (t-\tau)^{-\frac{1}{2}}(1+\tau)^{-\sigma-\frac{3}{4}+2\delta}\dd\tau\\
&\leqslant CC_0^2\varepsilon^2(1+t)^{-\frac{\sigma}{2}+\delta}\,\,  (0\leqslant\delta\leqslant\frac{\sigma}{4}).
\end{aligned}
\end{equation*}
This completes the estimate of $J_1$, i.e.
$$\|J_1\|_{L^2}\leqslant CC_0^2\varepsilon^2(1+t)^{-\frac{\sigma}{2}+\delta}, \,\,(0\leqslant\delta\leqslant\frac{\sigma}{6}-\frac{1}{12}). $$
Now check the estimate of $J_2+J_6$, actually, we have
\begin{equation}\label{E4.22}
\begin{aligned}
 J_{2}+J_6 &\leqslant \int_0^t e^{\lambda_1(t-\tau)}\mathcal{F}_s(\widehat{\mathbb{P}u\cdot\nabla u})_3(\tau)\dd+  \int_0^t \frac{|\xi_3|}{|\xi_h|} e^{\lambda_1(t-\tau)}\mathcal{F}_s(\widehat{\mathbb{P}u\cdot\nabla u})_3(\tau)\dd \tau
\end{aligned}
\end{equation}
Substituting \eqref{E4.20} into above inequality, we obtain that
\begin{equation*}
\begin{aligned}
 \|J_{2}\|_{L^2}+\|J_6\|_{L^2} &\leqslant \int_0^t\left\|e^{\lambda_1(t-\tau)}\left(\frac{|\xi_h|^2}{|\xi|^2}\mathcal{F}_s(\widehat{u\cdot\nabla u_3})\right)\right\|_{L^2}\dd\tau+\int_0^t\left\| e^{\lambda_1(t-\tau)}\frac{ \ri \xi_3\xi_h}{|\xi|^2}\cdot\mathcal{F}_c(\widehat{u\cdot\nabla u_h})\right\|_{L^2}\dd \tau\\
 &+\int_0^t \left\|\left(\frac{|\xi_3|}{|\xi_h|} e^{\lambda_1(t-\tau)} \frac{|\xi_h|^2}{|\xi|^2}\mathcal{F}_s(\widehat{u\cdot\nabla u_3})\right)\right\|_{L^2}\dd \tau+\int_0^t  \left\|e^{\lambda_1(t-\tau)}\left(\frac{ \ri \xi_3\xi_h}{|\xi|^2}\cdot\mathcal{F}_c(\widehat{u\cdot\nabla u_h})\right)\right\|_{L^2}\dd \tau\\
 &\leqslant 2\int_0^t \|e^{\lambda_1(t-\tau)}\mathcal{F}_s(\widehat{u\cdot\nabla u_3})\|_{L^2}\dd \tau+2\int_0^t\|e^{\lambda_1(t-\tau)}\mathcal{F}_c(\widehat{u\cdot\nabla u_h})\|_{L^2}\dd \tau\\
 &=J_{12}+J_{11}\leqslant CC_0^2\varepsilon^2(1+t)^{-\frac{\sigma}{2}+\delta}, \,\,(0\leqslant\delta\leqslant\frac{\sigma}{6}-\frac{1}{12}).
\end{aligned}
\end{equation*}
Similarly, we can calculate that
\begin{equation*}
\begin{aligned}
 \|J_{3}\|_{L^2}+\|J_5\|_{L^2}+\|J_7\|_{L^2} &\leqslant  \int_0^t\left\|e^{\lambda_1(t-\tau)}\frac{\ri \xi_h\xi_3}{|\xi||\xi_h|}\sin\frac{|\xi_h|}{|\xi|}(t-\tau) \mathcal{F}_s\widehat{u\cdot\nabla\theta}(\tau)\right\|_{L^2}\dd\tau\\
 &+\int_0^t \left\|e^{\lambda_1(t-\tau)}\frac{|\xi_h|}{|\xi|}\sin\frac{|\xi_h|}{|\xi|}(t-\tau)\mathcal{F}_s\widehat{u\cdot\nabla\theta}(\tau)\right\|_{L^2}\dd \tau\\
 &+\int_0^t\left\|e^{\lambda_1(t-\tau)}\cos\frac{|\xi_h|}{|\xi|}(t-\tau) \mathcal{F}_s\widehat{u\cdot\nabla\theta}(\tau)\right\|_{L^2}\dd \tau\\
 &\leqslant 3\int_0^t\left\|e^{\lambda_1(t-\tau)}\mathcal{F}_s\widehat{u\cdot\nabla\theta}(\tau)\right\|_{L^2}\dd \tau
\end{aligned}
\end{equation*}
By the same procedures for $\|J_{111}\|_{L^2}$ and $\|J_{112}\|_{L^2}$, it is easy to see that
\begin{equation*}
\begin{aligned}
 \|J_{3}\|_{L^2}+\|J_5\|_{L^2}+\|J_7\|_{L^2} &\leqslant  CC_0^2\varepsilon^2(1+t)^{-\frac{\sigma}{2}+\delta}\,\,  (0\leqslant\delta\leqslant\frac{\sigma}{6}-\frac{1}{12}).
\end{aligned}
\end{equation*}
Using the argument to dealt with the formula stated in \eqref{E4.22}, we can observe that
\begin{equation*}
\begin{aligned}
 \|J_{4}\|_{L^2}   \leqslant  CC_0^2\varepsilon^2(1+t)^{-\frac{\sigma}{2}+\delta}\,\,  (0\leqslant\delta\leqslant\frac{\sigma}{6}-\frac{1}{12}).
\end{aligned}
\end{equation*}
Therefor, when $\varepsilon>0$ small enough, such that $3CC_0\varepsilon<\frac{1}{2}$,
$$\|u\|_{L^2}+\|\theta\|_{L^2}\leqslant 3CC^2_0\varepsilon^2(1+t)^{-\frac{\sigma}{2}+\delta}\leqslant \frac{C_0}{2}\varepsilon(1+t)^{-\frac{\sigma}{2}+\delta}\,\,  (0\leqslant\delta\leqslant\frac{\sigma}{6}-\frac{1}{12}).$$
\subsection{Estimates of $\|\p_3u\|_{L^2},\|\p_3\theta\|_{L^2}$ and verification of \eqref{I4.12}}

In this subsection, we prove the upper bounds for $\|\p_3u\|_{L^2}$ and $\|\p_3\theta\|_{L^2}$ and verify \eqref{I4.12}. We again make use of the integral representations \eqref{E4.14}, \eqref{E4.15} and \eqref{E4.16}. By  Lemma \ref{lem1} (4), we know that
\begin{equation}\label{E4.23}
\|\p_3 u_h\|_{L^2}=\frac{1}{\sqrt{2\pi^3}}\|\xi_3\mathcal{F}_c \widehat{u_h}\|_{L^2},\,\, \|\p_3 u_3\|_{L^2}=\|\nabla\cdot u_h\|_{L^2},\, \, \|\p_3 \theta\|_{L^2}=\frac{1}{\sqrt{2\pi^3}}\|\xi_3\mathcal{F}_s \hat{\theta}\|_{L^2}
\end{equation}
From \eqref{E4.14}, \eqref{E4.15}, \eqref{E4.16}, \eqref{E4.23} and Proposition \ref{prop2.1} , it is sufficient to consider the following terms one by one
\begin{eqnarray*}
&\xi_3 J_1&=\int_0^te^{\lambda_1(t-\tau)}\xi_3\mathcal{F}_c(\widehat{\mathbb{P}u\cdot\nabla u})_h(\tau)\dd\tau,\\
&\xi_3J_2&=\int_0^t\xi_3\frac{\ri \xi_h\xi_3}{|\xi_h|^2}\left(1+\cos\frac{|\xi_h|}{|\xi|}(t-\tau)\right)e^{\lambda_1(t-\tau)}\mathcal{F}_s(\widehat{\mathbb{P}u\cdot\nabla u})_3(\tau)\dd \tau,\\
&\xi_3J_3&=\int_0^t\xi_3\frac{\ri \xi_h\xi_3}{|\xi||\xi_h|}\left(\sin\frac{|\xi_h|}{|\xi|}(t-\tau)\right) e^{\lambda_1(t-\tau)} \mathcal{F}_s\widehat{u\cdot\nabla\theta}(\tau)\dd \tau,\\
&\xi_3J_4&=\int_0^t\xi_3\left(\cos\frac{|\xi_h|}{|\xi|}(t-\tau)\right) e^{\lambda_1(t-\tau)}\mathcal{F}_s(\widehat{\mathbb{P}u\cdot\nabla u})_3(\tau)\dd \tau, \\
&\xi_3J_5&=\int_0^t\xi_3\left(\frac{|\xi_h|}{|\xi|}\sin\frac{|\xi_h|}{|\xi|}(t-\tau)\right)e^{\lambda_1(t-\tau)}\mathcal{F}_s\widehat{u\cdot\nabla\theta}(\tau)\dd \tau,\\
&\xi_3J_6&=\int_0^t\xi_3\left(\frac{|\xi|}{|\xi_h|}\sin\frac{|\xi_h|}{|\xi|}(t-\tau)\right)e^{\lambda_1(t-\tau)}\mathcal{F}_s(\widehat{\mathbb{P}u\cdot\nabla u})_3(\tau)\dd \tau,\\
&\xi_3J_7&=\int_0^t\xi_3\left(\cos\frac{|\xi_h|}{|\xi|}(t-\tau)\right) e^{\lambda_1(t-\tau)}\mathcal{F}_s\widehat{u\cdot\nabla\theta}(\tau)\dd \tau.
\end{eqnarray*}
To check  $\|\xi_3J_1\|_{L^2}$, From \eqref{E4.19} we know that
\begin{equation*}
\begin{aligned}
\|\xi_3J_1\|_{L^2}&\leqslant\int_0^t\left\|e^{\lambda_1(t-\tau)}\left[\mathcal{F}_s(\widehat{\p_3(u\cdot\nabla u_h}))+\sum\limits_{k=1}^2\frac{ \xi_h\xi_k\xi_3}{|\xi|^2}\mathcal{F}_c(\widehat{u\cdot\nabla u_k})+\frac{\ri\xi_h\xi^2_3}{|\xi|^2}\mathcal{F}_s(\widehat{u\cdot\nabla u_3})\right]\right\|_{L^2}\dd\tau\\
&\leqslant \int_0^t\|e^{\lambda_1(t-\tau)}\mathcal{F}_s(\widehat{\p_3(u\cdot\nabla u_h}))\|_{L^2}\dd\tau+\int_0^t\|e^{\lambda_1(t-\tau)} |\xi_h|\mathcal{F}_c(\widehat{u\cdot\nabla u_h})\|_{L^2}\dd\tau+ \int_0^t\|e^{\lambda_1(t-\tau)} |\xi_h| \mathcal{F}_s(\widehat{u\cdot\nabla u_3})\|_{L^2}\dd\tau\\
&:=M_{11}+M_{12}+M_{13}.
\end{aligned}
\end{equation*}
We find that $M_{11}$ is subtle difficult. To end it, we can divide into three terms  as follows
\begin{equation*}
\begin{aligned}
M_{11}&\leqslant \int_0^t\|e^{\lambda_1(t-\tau)}\mathcal{F}_s(\widehat{\p_h(u\cdot\p_3 u_h}))\|_{L^2}\dd\tau+\int_0^t\|e^{\lambda_1(t-\tau)}\mathcal{F}_s(\widehat{\nabla_h\cdot u_h\cdot\p_3 u_h}))\|_{L^2}\dd\tau+\int_0^t\|e^{\lambda_1(t-\tau)}\mathcal{F}_s(\widehat{ u_3\cdot\p_3^2 u_h}))\|_{L^2}\dd\tau\\
&:=L_{11}+L_{12}+L_{13}
\end{aligned}
\end{equation*}
From Lemma \ref{lem3}, one refers that
\begin{equation*}
\begin{aligned}
L_{11}&\leqslant \int_0^t\|e^{\lambda_1(t-\tau)}\mathcal{F}_s(\widehat{\p_h(u_h\cdot\p_3 u_h}))\|_{L^2}\dd\tau \\
&\leqslant\int_0^t\|e^{\Delta_h(t-\tau)}\p_h(u_h\cdot\p_3 u_h)\|_{L^2}\dd\tau\\
&\leqslant  C\int_0^t(t-\tau)^{-\frac{3}{4}}\left\| \|u\cdot\p_3 u_h\|_{L^\frac{4}{3}_{x_h}}\right\|_{L^2_{x_3}}\dd\tau\\
&\leqslant C\int_0^t(t-\tau)^{-\frac{3}{4}} \| u\|_{L^2}^\frac{1}{4}\|\p_3 u_h\|_{L^2}^\frac{1}{4}\|\nabla_h u_h\|^\frac{1}{4}_{L^2}\|\p_3\nabla_hu_h\|_{L^2}^\frac{1}{4}\|\p_3 u_h\|_{L^2}\dd\tau.
\end{aligned}
\end{equation*}
By   \eqref{I4.6}-\eqref{I4.8}, we conclude that
\begin{equation*}
\begin{aligned}
L_{11} &\leqslant CC_0^2\varepsilon^2\int_0^t(t-\tau)^{-\frac{3}{4}}(1+\tau)^{-\frac{7\sigma}{8}+\frac{17}{4}\delta-\frac{1}{8}}\dd\tau\\
&\leqslant CC_0^2\varepsilon^2 (1+t)^{-\frac{\sigma}{2}+3\delta}, \left(0\leqslant\delta\leqslant\frac{3\sigma}{10}-\frac{1}{10}\right).
\end{aligned}
\end{equation*}
It is easy to see that
\begin{equation*}
\begin{aligned}
L_{12}&\leqslant \int_0^t\|e^{\lambda_1(t-\tau)}\mathcal{F}_s(\widehat{\nabla_h\cdot u_h\cdot\p_3 u_h}))\|_{L^2}\dd\tau \\
&\leqslant\int_0^t\|e^{\Delta_h(t-\tau)}\nabla_h\cdot u_h\cdot\p_3 u_h\|_{L^2}\dd\tau\\
&\leqslant  C\int_0^t(t-\tau)^{-\frac{1}{2}}\left\| \|\nabla_h  u_h\cdot\p_3 u_h\|_{L^1_{x_h}}\right\|_{L^2_{x_3}}\dd\tau\\
&\leqslant C\int_0^t(t-\tau)^{-\frac{1}{2}} \| \p_3 u_h\|_{L^2}^\frac{1}{2}\|\p^2_3 u_h\|_{L^2}^\frac{1}{2}\|\nabla_h u_h\|_{L^2} \dd\tau\\
&\leqslant C \int_0^t(t-\tau)^{-\frac{1}{2}} \| \p_3 u_h\|_{L^2}^\frac{3}{4}\|\p^3_3 u_h\|_{L^2}^\frac{1}{4}\|\nabla_h u_h\|_{L^2} \dd\tau,
\end{aligned}
\end{equation*}
here, the interpolation inequality $\|\p^2_3 u_h\|_{L^2}\leqslant C \| \p_3 u_h\|_{L^2}^\frac{1}{2}\|\p^3_3 u_h\|_{L^2}^\frac{1}{2}$ is used.

Taking advantage of  \eqref{I4.6}-\eqref{I4.8} again, we have got
\begin{equation*}
\begin{aligned}
L_{12}&\leqslant CC_0^2\varepsilon^2\int_0^t(t-\tau)^{-\frac{1}{2}}(1+\tau)^{-\frac{7}{8}\sigma+\frac{13}{4}\delta-\frac{1}{2}}\dd\tau\\
&\leqslant CC_0^2\varepsilon^2 (1+t)^{-\frac{\sigma}{2}+3\delta}, \left(0\leqslant\delta\leqslant\frac{3\sigma}{2} \right).
\end{aligned}
\end{equation*}
Let us compute the term $L_{13}$, in fact
 \begin{equation*}
 \begin{aligned}
L_{13}&\leqslant \int_0^t\|e^{\Delta_h(t-\tau)}u_3\cdot\p^2_3 u_h\|_{L^2}\dd\tau\\
&\leqslant C\int_0^t(t-\tau)^{-\frac{1}{2}}\left\|\|u_3\cdot\p^2_3 u_h\|_{L^1_{x_h}}\right\|_{L^2_{x_3}}\dd\tau\\
&\leqslant C\int_0^t(t-\tau)^{-\frac{1}{2}}\left\|\|u_3\|_{L^2_{x_h}}\|\p^2_3 u_h\|_{L^2_{x_h}}\right\|_{L^2_{x_3}}\dd\tau\\
&\leqslant C\int_0^t(t-\tau)^{-\frac{1}{2}} \|u_3\|^\frac{1}{2}_{L^2}\|\nabla_h\cdot u_h\|_{L^2}^\frac{1}{2}\|\p^2_3 u_h\| _{L^2}\dd\tau\\
&\leqslant C\int_0^t(t-\tau)^{-\frac{1}{2}} \|u_3\|^\frac{1}{2}_{L^2}\|\nabla_h\cdot u_h\|_{L^2}^\frac{1}{2}\|\p_3 u_h\|^\frac{1}{2} _{L^2}\|\p_3^3u_h\|_{L^2}^\frac{1}{2}\dd\tau
\end{aligned}
\end{equation*}
where we have applied the interpolation inequality $\|\p^2_3 u_h\|_{L^2}\leqslant C \| \p_3 u_h\|_{L^2}^\frac{1}{2}\|\p^3_3 u_h\|_{L^2}^\frac{1}{2}$.
We invoke the ansatz in \eqref{I4.6}-\eqref{I4.8} to obtain,
\begin{equation*}
 \begin{aligned}
L_{13} &\leqslant \int_0^t(t-\tau)^{-\frac{1}{2}} \|u_3\|^\frac{1}{2}_{L^2}\|\nabla_h\cdot u_h\|_{L^2}^\frac{1}{2}\|\p_3 u_h\|^\frac{1}{2} _{L^2}\|\p_3^3u_h\|_{L^2}^\frac{1}{2}\dd\tau\\
&\leqslant CC_0^2\varepsilon^2\int_0^t (t-\tau)^{-\frac{1}{2}}(1+\tau)^{-\frac{3}{4}\sigma-\frac{1}{4}+\frac{5}{2}\delta}\dd\tau\\
&\leqslant CC_0^2\varepsilon^2(1+t)^{-\frac{\sigma}{2}+3\delta}\left(\frac{1}{2}-\frac{\sigma}{2}\leqslant\delta<1\right),
\end{aligned}
\end{equation*}
hence, we infer that
$$M_{11}\leqslant CC_0^2\varepsilon^2(1+t)^{-\frac{\sigma}{2}+3\delta}\left(\frac{1}{2}-\frac{\sigma}{2}\leqslant\delta<1\right).$$

It is not difficult to observe that the estimates for $M_{12}$ and $M_{13}$ are very similar to those of $L_{11}$, thus we conclude that
\begin{equation*}
 \begin{aligned}
M_{12}+M_{13}\leqslant CC_0^2\varepsilon^2(1+t)^{-\frac{\sigma}{2}+3\delta},\left(0\leqslant\delta\leqslant\frac{3\sigma}{10}-\frac{1}{10}\right).
\end{aligned}
\end{equation*}
By \eqref{E4.20},we can estimate the term $\xi_3J_2$ as
\begin{equation*}
\begin{aligned}
\|\xi_3J_2\|_{L^2}&\leqslant\int_0^t\left\|\xi_3\frac{\ri \xi_h\xi_3}{|\xi_h|^2}\left(1+\cos\frac{|\xi_h|}{|\xi|}(t-\tau)\right)e^{\lambda_1(t-\tau)}\mathcal{F}_s(\widehat{\mathbb{P}u\cdot\nabla u})_3(\tau)\right\|_{L^2}\dd \tau\\
&\leqslant 2\int_0^t\left\| \frac{\ri \xi_h\xi^2_3}{|\xi_h|^2} e^{\lambda_1(t-\tau)}\mathcal{F}_s(\widehat{\mathbb{P}u\cdot\nabla u})_3(\tau)\right\|_{L^2}\dd \tau\\
&\leqslant C \int_0^t\left\| e^{\lambda_1(t-\tau)}\frac{ \ri \xi_h\xi^2_3}{|\xi_h|^2}\left (\frac{|\xi_h|^2}{|\xi|^2}\mathcal{F}_s(\widehat{u\cdot\nabla u_3})+\frac{ \ri \xi_3\xi_h}{|\xi|^2}\cdot\mathcal{F}_c(\widehat{u\cdot\nabla u_h})(\tau)\right)\right\|_{L^2}\dd \tau\\
&\leqslant C \int_0^t\left\| e^{\lambda_1(t-\tau)}\frac{  \ri \xi_h\xi^2_3}{|\xi_h|^2}\left (\frac{|\xi_h|^2}{|\xi|^2}\mathcal{F}_s(\widehat{u\cdot\nabla u_3})\right)\right\|_{L^2}\dd \tau+C \int_0^t\left\| e^{\lambda_1(t-\tau)}\frac{  \xi_h\xi^2_3}{|\xi_h|^2}\left(\frac{ \ri \xi_3\xi_h}{|\xi|^2}\cdot\mathcal{F}_c(\widehat{u\cdot\nabla u_h})(\tau)\right)\right\|_{L^2}\dd \tau\\
&\leqslant C\int_0^t\left\| e^{\lambda_1(t-\tau)}   \ri \xi_h \mathcal{F}_s(\widehat{u\cdot\nabla u_3}) \right\|_{L^2}\dd \tau+C \int_0^t\left\| e^{\lambda_1(t-\tau)}      \mathcal{F}_s(\widehat{\p_3u\cdot\nabla u_h})(\tau) \right\|_{L^2}\dd \tau\\
&= M_{13}+M_{11}.
\end{aligned}
\end{equation*}
This implies that, for $\frac{3}{4}\leqslant \sigma<1$
$$\|\xi_3J_2\|_{L^2}\leqslant M_{13}+M_{11}\leqslant CC_0^2\varepsilon^2(1+t)^{-\frac{\sigma}{2}+3\delta},\left(\frac{1}{2}-\frac{\sigma}{2}\leqslant\delta\leqslant\frac{3\sigma}{10}-\frac{1}{10}\right) $$

It is easy check that
\begin{equation*}
\begin{aligned}
\|\xi_3J_3\|_{L^2}&\leqslant\int_0^t\left\|\xi_3\frac{\ri \xi_h\xi_3}{|\xi||\xi_h|}\left(\sin\frac{|\xi_h|}{|\xi|}(t-\tau)\right) e^{\lambda_1(t-\tau)} \mathcal{F}_s\widehat{u\cdot\nabla\theta}(\tau)\right\|_{L^2}\dd \tau\leqslant \int_0^t\left\|  e^{\lambda_1(t-\tau)} \xi_3\mathcal{F}_s\widehat{u\cdot\nabla\theta}(\tau)\right\|_{L^2}\dd \tau\\
&\leqslant \int_0^t\left\|  e^{\lambda_1(t-\tau)}  \mathcal{F}_c\widehat{\p_3 (u\cdot\nabla\theta)}(\tau)\right\|_{L^2}\dd \tau \leqslant\int_0^t\left\|  e^{\Delta_h(t-\tau)}   \p_3 (u\cdot\nabla\theta) (\tau)\right\|_{L^2}\dd \tau~~(\because u\cdot\nabla\theta(x_h,0)=0)\\
&\leqslant\int_0^t\left\|  e^{\Delta_h(t-\tau)}   \p_3 u_h\cdot\nabla_h\theta (\tau)\right\|_{L^2}\dd \tau+ \int_0^t\left\|  e^{\Delta_h(t-\tau)}   \nabla_h\cdot u_h \p_3\theta  (\tau)\right\|_{L^2}\dd \tau\\
&+\int_0^t\left\|  e^{\Delta_h(t-\tau)}\nabla_h (u_h\cdot\p_3\theta) (\tau)\right\|_{L^2}\dd \tau+ \int_0^t\left\|  e^{\Delta_h(t-\tau)}   u_3\cdot\p^2_3\theta (\tau)\right\|_{L^2}\dd \tau\\
&:=M_{31}+M_{32}+M_{33}+M_{34}.
\end{aligned}
\end{equation*}

The same arguments, which is corresponding to those estimates of $L_{12},L_{11}$  and $L_{13}$, are used to the estimates of $M_{31}+M_{32}, M_{33},M_{34}$respectively. We can obtain that
\begin{equation*}
\begin{aligned}
\|\xi_3J_3\|_{L^2}&\leqslant CC_0^2\varepsilon^2(1+t)^{-\frac{\sigma}{2}+3\delta},\left(\frac{1}{2}-\frac{\sigma}{2}\leqslant\delta\leqslant\frac{3\sigma}{10}-\frac{1}{10}\right)
\end{aligned}
\end{equation*}
 Collecting these estimates and \eqref{E4.14}, one claims that $$\|\p_3 u_h\|_{L^2}\leqslant \frac{C_0}{2}\varepsilon(1+t)^{-\frac{\sigma}{2}+3\delta}.$$

 It is easy to find that there are the same bounds of $\|\xi_3J_7\|_{L^2}$  and $\|\xi_3J_3\|_{L^2}.$ Since
 \begin{equation*}
\begin{aligned}
\|\xi_3J_6\|_{L^2}&\leqslant \int_0^t\left\|\xi_3\left(\frac{|\xi|}{|\xi_h|}\sin\frac{|\xi_h|}{|\xi|}(t-\tau)\right)e^{\lambda_1(t-\tau)}\left[\frac{|\xi_h|^2}{|\xi|^2}\mathcal{F}_s(\widehat{u\cdot\nabla u_3})+\frac{ \ri \xi_3\xi_h}{|\xi|^2}\cdot\mathcal{F}_c(\widehat{u\cdot\nabla u_h})\right](\tau)\right\|_{L^2}\dd \tau\\
&\leqslant \int_0^t\|e^{\lambda_1(t-\tau)}|\xi_h| \mathcal{F}_s(\widehat{u\cdot\nabla u_3}\|_{L^2}\dd\tau+\int_0^t\| e^{\lambda_1(t-\tau)} |\xi_h| \mathcal{F}_c(\widehat{u\cdot\nabla u_h})(\tau)\|_{L^2}\dd \tau,\\
&=M_{13}+M_{12}
\end{aligned}
\end{equation*}
thus one has got
 \begin{equation*}
\begin{aligned}
\|\xi_3J_6\|_{L^2}&\leqslant CC_0^2\varepsilon^2(1+t)^{-\frac{\sigma}{2}+3\delta} \left(\frac{1}{2}-\frac{\sigma}{2}\leqslant\delta\leqslant\frac{3\sigma}{10}-\frac{1}{10}\right).
\end{aligned}
\end{equation*}
From the formula \eqref{E4.16}, and these estimates of $\|\xi_3J_6\|_{L^2}$ and $\|\xi_3J_7\|_{L^2}$, we obtain  for $\varepsilon>0$ small enough,
$$\|\p_3\theta\|_{L^2}\leqslant\frac{C_0}{2}\varepsilon (1+t)^{-\frac{\sigma}{2}+3\delta},\left(\frac{1}{2}-\frac{\sigma}{2}\leqslant\delta\leqslant\frac{3\sigma}{10}-\frac{1}{10}\right).$$

\subsection{Estimates of $\|\nabla_h u\|_{L^2}$ and $\|\nabla_h\theta\|_{L^2}$ and verification of \eqref{I4.13} }
 To verify the decay of the $\|\nabla_h u\|_{L^2}$ and $\|\nabla_h\theta\|_{L^2}$, we apply the techniques in the previous subsection. From Lemma \ref{lem1}, it suffices to check that $\|\ri\xi_h\mathcal{F} (V(t)v_0)\|_{L^2}.$  From the formula stated in Proposition \ref{prop4.1}, we shall verify the following scheme in $L^2-$norm
 \begin{eqnarray*}
&\ri\xi_h J_1&=\int_0^te^{\lambda_1(t-\tau)}\ri\xi_h\mathcal{F}_c(\widehat{\mathbb{P}u\cdot\nabla u})_h(\tau)\dd\tau,\\
&\ri\xi_hJ_2&=\int_0^t\ri\xi_h\frac{\ri \xi_h\xi_3}{|\xi_h|^2}\left(1+\cos\frac{|\xi_h|}{|\xi|}(t-\tau)\right)e^{\lambda_1(t-\tau)}\mathcal{F}_s(\widehat{\mathbb{P}u\cdot\nabla u})_3(\tau)\dd \tau,\\
&\ri\xi_hJ_3&=\int_0^t\ri\xi_h\frac{\ri \xi_h\xi_3}{|\xi||\xi_h|}\left(\sin\frac{|\xi_h|}{|\xi|}(t-\tau)\right) e^{\lambda_1(t-\tau)} \mathcal{F}_s\widehat{u\cdot\nabla\theta}(\tau)\dd \tau,\\
&\ri\xi_hJ_4&=\int_0^t\ri\xi_h\left(\cos\frac{|\xi_h|}{|\xi|}(t-\tau)\right) e^{\lambda_1(t-\tau)}\mathcal{F}_s(\widehat{\mathbb{P}u\cdot\nabla u})_3(\tau)\dd \tau, \\
&\ri\xi_hJ_5&=\int_0^t\ri\xi_h\left(\frac{|\xi_h|}{|\xi|}\sin\frac{|\xi_h|}{|\xi|}(t-\tau)\right)e^{\lambda_1(t-\tau)}\mathcal{F}_s\widehat{u\cdot\nabla\theta}(\tau)\dd \tau,\\
&\ri\xi_hJ_6&=\int_0^t\ri\xi_h\left(\frac{|\xi|}{|\xi_h|}\sin\frac{|\xi_h|}{|\xi|}(t-\tau)\right)e^{\lambda_1(t-\tau)}\mathcal{F}_s(\widehat{\mathbb{P}u\cdot\nabla u})_3(\tau)\dd \tau,\\
&\ri\xi_hJ_7&=\int_0^t\ri\xi_h\left(\cos\frac{|\xi_h|}{|\xi|}(t-\tau)\right) e^{\lambda_1(t-\tau)}\mathcal{F}_s\widehat{u\cdot\nabla\theta}(\tau)\dd \tau.
\end{eqnarray*}
We firstly consider the terms $\|\ri\xi_hJ_3\|_{L^2},\|\ri\xi_hJ_5\|_{L^2}$ and $\|\ri\xi_hJ_7\|_{L^2}$, then
\begin{equation*}
\begin{aligned}
\|\ri\xi_hJ_3\|_{L^2}+\|\ri\xi_hJ_5\|_{L^2}+\|\ri\xi_hJ_7\|_{L^2}&\leqslant \int_0^t\|\ri\xi_h e^{\lambda_1(t-\tau)} \mathcal{F}_s\widehat{u\cdot\nabla\theta}(\tau)\|_{L^2}\dd \tau\\
&\leqslant C\int_0^t\|\nabla_h e^{\Delta_h(t-\tau)}   u\cdot\nabla\theta (\tau)\|_{L^2}\dd \tau\\
&\leqslant C\int_0^t\|\nabla_h e^{\Delta_h(t-\tau)}   u_h\cdot\nabla_h\theta (\tau)\|_{L^2}\dd \tau+  C\int_0^t\|\nabla_h e^{\Delta_h(t-\tau)}   u_3\cdot\p_3\theta (\tau)\|_{L^2}\dd \tau\\
&:=N_{11}+N_{12}.
\end{aligned}
\end{equation*}
By the proof of subsection 5.4 in \cite{JYWu}, we choose $q$ satisfying
$$q=\frac{2}{1+\sigma}.$$
As $\frac{3}{4}\leqslant\sigma<1,$ we have $1<q<2$, then by Lemma \ref{lem3},
$$N_{12}\leqslant C\int_0^t (t-\tau)^{-\frac{1+\sigma}{2}}\left\|\|u_3\p_3\theta\|_{L^q_{x_h}}\right\|_{L^2_{x_3}}\dd\tau.$$
Using the proof in \cite{JYWu} and \eqref{II2.1}, it is easy to obtain
$$\left\|\|u_3\p_3\theta\|_{L^q_{x_h}}\right\|_{L^2_{x_3}}\leqslant C\|\nabla_h\cdot u_h\|_{L^2}^\frac{1}{2}\|u_3\|_{L^2}^{\sigma-\frac{1}{2}}\|\nabla_h u_3\|_{L^2}^{1-\sigma}\|\p_3\theta\|_{L^2}.$$  Thus, for any $\frac{3}{4}\leqslant\sigma<1,$
\begin{equation*}
\begin{aligned}
N_{12}&\leqslant C\int_0^t (t-\tau)^{-\frac{1+\sigma}{2}}\left\|\|u_3\p_3\theta\|_{L^q_{x_h}}\right\|_{L^2_{x_3}}\dd\tau\\
&\leqslant C\int_0^t (t-\tau)^{-\frac{1+\sigma}{2}}\|\nabla_h\cdot u_h\|_{L^2}^\frac{1}{2}\|u_3\|_{L^2}^{\sigma-\frac{1}{2}}\|\nabla_h u_3\|_{L^2}^{1-\sigma}\|\p_3\theta\|_{L^2}\dd\tau\\
&\leqslant CC_0^2\varepsilon^2\int_0^t(t-\tau)^{-\frac{1+\sigma}{2}}(1+\tau)^{-\frac{3}{4}-\sigma+4\delta}\dd\tau\\
&\leqslant CC_0^2\varepsilon^2(1+t)^{-\frac{1+\sigma}{2}+\delta}~\left(0<\delta\leqslant\frac{\sigma}{3}-\frac{1}{12}\right).
\end{aligned}
\end{equation*}
As the same techniques are applied for the term $N_{12}$, we easily obtain
$$N_{11}\leqslant C\int_0^t(t-\tau)^{-\frac{1+\sigma}{2}}\|u_h\|_{L^2}^{\sigma-\frac{1}{2}}\|\nabla_h u_h\|_{L^2}^{\frac{3}{2}-\sigma}\|\nabla_h\theta\|_{L^2}\leqslant CC_0^2\varepsilon^2(1+t)^{-\frac{1+\sigma}{2}+\delta}~\left(0<\delta<\frac{\sigma}{2}+\frac{1}{4}\right).$$
Therefore, we conclude that
$$\|\ri\xi_hJ_3\|_{L^2}+\|\ri\xi_hJ_5\|_{L^2}+\|\ri\xi_hJ_7\|_{L^2}\leqslant CC_0^2\varepsilon^2(1+t)^{-\frac{1+\sigma}{2}+\delta}~\left(0<\delta\leqslant\frac{\sigma}{3}-\frac{1}{12}\right).$$
From \eqref{E4.19} and \eqref{E4.20} and by the previous calculation  for $\|\p_3u_h\|_{L^2}$ and Lemma \ref{lem3}, we can achieve that
\begin{equation*}
\begin{aligned}
\|\ri\xi_hJ_1\|_{L^2}+\|\ri\xi_hJ_2\|_{L^2}+\|\ri\xi_hJ_4\|_{L^2}+\|\ri\xi_hJ_6\|_{L^2}&\leqslant C \int_0^t\|\ri\xi_h e^{\lambda_1(t-\tau)} \mathcal{F}_c\widehat{u\cdot\nabla u_h}(\tau)\|_{L^2}\dd \tau\\
&+C\int_0^t\|\ri\xi_h e^{\lambda_1(t-\tau)} \mathcal{F}_s\widehat{u\cdot\nabla u_3}(\tau)\|_{L^2}\dd \tau\\
& \leqslant C \int_0^t(t-\tau)^{-\frac{1+\sigma}{2}}\left\|\| u\cdot\nabla u_h\|_{L^q_{x_h}}\right\|_{L^2_{x_3}}\dd \tau\\
&+C\int_0^t(t-\tau)^{-\frac{1+\sigma}{2}}\left\|\| u\cdot\nabla u_3\|_{L^q_{x_h}}\right\|_{L^2_{x_3}}\dd \tau\\
&:=N_{21}+N_{22}.
\end{aligned}
\end{equation*}
For the terms $N_{21}$ and $N_{22}$, we can write the following form
\begin{equation*}
\begin{aligned}
N_{21}+N_{22}&\leqslant C \int_0^t(t-\tau)^{-\frac{1+\sigma}{2}}\left\|\| u_h\cdot\nabla_h u_h\|_{L^q_{x_h}}\right\|_{L^2_{x_3}}\dd \tau+C\int_0^t(t-\tau)^{-\frac{1+\sigma}{2}}\left\|\| u_3\cdot\p_3 u_h\|_{L^q_{x_h}}\right\|_{L^2_{x_3}}\dd \tau\\
&+C\int_0^t(t-\tau)^{-\frac{1+\sigma}{2}}\left\|\| u_h\cdot\nabla_h u_3\|_{L^q_{x_h}}\right\|_{L^2_{x_3}}\dd \tau+C\int_0^t(t-\tau)^{-\frac{1+\sigma}{2}}\left\|\| u_3\cdot(\p_3u_3)\|_{L^q_{x_h}}\right\|_{L^2_{x_3}}\dd \tau\\
&\leqslant C \int_0^t(t-\tau)^{-\frac{1+\sigma}{2}}\left\|\| u\cdot\nabla_h u\|_{L^q_{x_h}}\right\|_{L^2_{x_3}}\dd \tau++C\int_0^t(t-\tau)^{-\frac{1+\sigma}{2}}\left\|\| u_3\cdot\p_3 u_h\|_{L^q_{x_h}}\right\|_{L^2_{x_3}}\dd \tau.
\end{aligned}
\end{equation*}
By the techniques for the terms $N_{11}$ and $N_{12}$, we easily have got
$$N_{21}+N_{22}\leqslant CC_0^2\varepsilon^2(1+t)^{-\frac{1+\sigma}{2}+\delta}~\left(0<\delta\leqslant\frac{\sigma}{3}-\frac{1}{12}\right).$$
Therefore, by the formula of solutions \eqref{E4.14}-\eqref{E4.16} we know that  when $\varepsilon>0$ small enough,
$$\|\nabla_h u\|_{L^2},\|\nabla_h\theta\|_{L^2}\leqslant\frac{C_0}{2}\varepsilon(1+t)^{-\frac{1+\sigma}{2}+\delta}~\left(0<\delta\leqslant\frac{\sigma}{3}-\frac{1}{12}\right)$$
and   $$\|\p_3 u_3\|_{L^2}=\|\nabla_h\cdot u_h\|_{L^2}\leqslant \frac{C_0}{2}\varepsilon(1+t)^{-\frac{\sigma+1}{2}+\delta} ~\left(0<\delta\leqslant\frac{\sigma}{3}-\frac{1}{12}\right). $$
Combining all estimates, we need to hunt for  $\delta$ satisfying $$\frac{1}{2}-\frac{\sigma}{2}\leqslant \delta< \frac{\sigma}{8}-\frac{1}{16}.$$
It is easy to see that,  as $\sigma> \frac{9}{10}$, $\delta$ can be attained.

\section*{Acknowledgments} W. Yang is supported by the National Natural Science Foundation of China (No.12061003), by the National Natural Science Foundation of Ningxia (2023AAC02044).  A. Zang is supported by the Construction project of first-class subjects in Ningxia higher education (NXYLXK2017B09), by the National Natural Science Foundation of China (Nos. 12261093, 12061080), by  Jiangxi Provincial Natural Science Foundation  (No. 20224ACB201004).

\section*{Declarations}
 There are not any conflicts of interest. This paper is  ethics approval and supported by National Natural Science Foundation of China(Nos.12061003,12261093, 12061080), National Natural Science Foundation of Ningxia(No. 2023AAC02044) and Jiangxi Provincial Natural Science Foundation(No. 20224ACB201004).

 \section*{Data availability}
 Data sharing not applicable to this article as no datasets were generated or analysed  during the current study.

\end{document}